\newtheorem{theo}{\bf Theorem}[section]
\newtheorem{coro}{\bf Corollary}[section]
\newtheorem{lem}{\bf Lemma}[section]
\newtheorem{rem}{\bf Remark}[section]
\newtheorem{defi}{\bf Definition}[section]
\newtheorem{ex}{\bf Example}[section]
\newtheorem{prop}{\bf Proposition}[section]
\def\R{{\mathbb{R}}}
\def\r{{\mathbb{R}}}
\def\N{{\mathbb{N}}}
\def\rn{{\mathbb{R}^{N}}}
\def\VTM{{V_T^M(\Omega)}}
\def\VTMi{{V_T^{M,\infty}(\Omega)}}
\newcommand{\sg}{{\mathrm{sgn}_0^+}}
\newcommand{\supp}{{\mathrm{supp}}}
\newcommand{\essinf}{{\mathrm{ess\,inf}}}
\newcommand{\va} {\vec{a}}
\newcommand{\tva} {\vec{\tilde{a}}}
\newcommand{\dep} {\delta}
\newcommand{\et} {d}
\newcommand{\tm} {\mu}
\def\rp{{[0,\infty )}}
\def\vt{{\vartheta^{\tau,r}}}
\def\bt{{\beta^{\tau,r}}}
\def\s{{\sigma}}
\def\bn{{\bar{\nabla}}}
\def\Iid{{ I_i^\delta}}
\def\iIid{{\int_{ I_i^\delta}}}
\def\Mijd{{ M_{i,j}^\delta}}
\def\Mss{{(\Mijd )^{**}}}
\def\OT{{{\Omega_T}}}
\def\iOT{{\int_{\OT}}}
\def\iO{{\int_{\Omega}}}
\def\iQd{{\int_{Q_j^\delta\cap\Omega} }}
\def\Qd{{Q_j^\delta}}
\def\tQd{{\widetilde{Q}_j^\delta}}
\def\iQdn{{\int_{Q_j^\delta\cap\{x:\xi_\delta(x)\neq 0\}} }}
\newcommand{\wt}{\widetilde}
\newcommand{\vp}{\varphi}
\newcommand{\dv}{\mathrm{div}}
\begin{document}

\begin{frontmatter}

\title{Parabolic equation in time and space dependent anisotropic {M}usielak-{O}rlicz spaces in absence of {L}avrentiev's phenomenon}

\author[1,2]{Iwona Chlebicka\corref{mycorrespondingauthor}}
\cortext[mycorrespondingauthor]{Corresponding author}
\ead{i.skrzypczak@mimuw.edu.pl}
\author[1]{Piotr Gwiazda}
\ead{p.gwiazda@mimuw.edu.pl}
 \author[2]{Anna Zatorska--Goldstein\fnref{myfootnote}}
  \ead{azator@mimuw.edu.pl}

\fntext[myfootnote]{The research of I.C. is supported by NCN grant no. 2016/23/D/ST1/01072.  The research of P.G. has been supported by the NCN grant  no. 2014/13/B/ST1/03094. The research of A.Z.-G. has been supported by the NCN grant  no. 2012/05/E/ST1/03232.  The work was also partially supported by the Simons - Foundation grant 346300 and the Polish Government MNiSW 2015-2019 matching fund.}

\address[1]{Institute of  Mathematics, Polish Academy of Sciences, ul. \'{S}niadeckich 8, 00-656 Warsaw, Poland
}
\address[2]{Institute of Applied Mathematics and Mechanics,
University of Warsaw, ul. Banacha 2, 02-097 Warsaw, Poland
}

\begin{abstract}
We study  a general nonlinear parabolic equation  on a Lipschitz bounded domain in $\rn$,
\begin{equation*}
\left\{\begin{array}{l l}
\partial_t u-\dv A(t,x,\nabla u)= f(t,x)&\text{in}\ \ \Omega_T,\\
u(t,x)=0 &\ \mathrm{  on} \ (0,T)\times\partial\Omega,\\
u(0,x)=u_0(x)&\text{in}\ \Omega,
\end{array}\right.
\end{equation*}
with $f\in L^\infty(\Omega_T)$ and $u_0\in L^\infty(\Omega)$. The growth of the monotone vector field $A$ is controlled by a generalized fully anisotropic $N$-function $M:[0,T)\times\Omega\times\rn\to\rp$ inhomogeneous in time and space, and under no growth restrictions on the last variable. It results in the need of the integration by parts formula which  has to be formulated in
an advanced way. Existence and uniqueness of solutions are proven when the Musielak-Orlicz space is reflexive OR  in absence of Lavrentiev's phenomenon. To ensure approximation properties of the space we impose natural assumption that the asymptotic behaviour of the modular function is sufficiently balanced. Its instances  are log-H\"older continuity of variable exponent or optimal closeness condition for powers in double phase spaces.

 The noticeable challenge of this paper is cosidering the problem in non-reflexive and inhomogeneous fully anisotropic space that changes along time.

\end{abstract}

\begin{keyword}  existence of solutions \sep Musielak-Orlicz spaces \sep parabolic problems 
\MSC[2010] 35K55 \sep  35A01
\end{keyword}

\end{frontmatter}

\tableofcontents
\section{Introduction}

The main result of the paper is existence and uniqueness of weak solutions to a family of general parabolic equations with bounded data, where the leading part of the operator is  controlled by a generalized nonhomogeneous and anisotropic $N$-function $M:[0,T)\times\Omega\times\rn\to\rp$. We stress out that we  \textbf{do not} impose any particular restriction on the growth of $M$ or its conjugate $M^*$ (neither $\Delta_2$, nor $\nabla_2$), apart from it being an $N$-function (i.e. convex with superlinear growth).  That is, we study the existence in particular in the spaces $L\log L$, $L_{\rm exp}$, and those equipped with modular function of irregular growth.  Note that $M\not\in\Delta_2$ can be trapped between two power-type functions, see~\cite{CGZG} or~\cite{BDMS} for distinct constructions. Other examples of~$N$-functions that do not satisfy  $\Delta_2$-condition are
\begin{itemize}
\item  $M(t,x,\xi)=a(t,x)\left( \exp(|\xi|)-1+|\xi|\right)$;
\item  $M(t,x,\xi)= a(t,x)|\xi_1|^{p_1(t,x)}\left(1+|\log|\xi||\right)+\exp(|\xi_2|^{p_2(t,x)})-1$, when $(\xi_1,\xi_2)\in\R^2$ and $p_i:\OT\to[1,\infty]$.
    This is also a model example to imagine what we mean by an anisotropic modular function.
\end{itemize} 

As special cases, we infer existence for the general parabolic equation in the isotropic or fully anisotropic Orlicz setting without growth conditions, as well as in all reflexive Musielak-Orlicz spaces including variable exponent spaces (with $1<<p(t,x)<<\infty$ under no regularity assumptions on $p$), weighted Sobolev (with bounded weights), and double phase spaces (no matter how far the exponents are if only the weight is bounded). Survey~\cite{IC-pocket} provides an overview of Musielak-Orlicz spaces as the setting for differential equations.

\subsubsection*{Musielak-Orlicz spaces}

Musielak-Orlicz spaces have been studied systematically starting from~\cite{Musielak,Sk1,Sk2} and developed in the context of fluid mechanics~\cite{gwiazda-non-newt,gwiazda-tmna,gwiazda2,Aneta}.  For other recent developments of~the framework of the spaces let us refer e.g. to~\cite{hhk,hht,mmos:ap,mmos2013}. These works concentrate, however, mostly on the case when a modular function satisfies $\Delta_2$ and $\nabla_2$ condition, that is in the separable and reflexive spaces. Even in non-reflexive cases growth conditions on $M$ or at least its conjugate $M^*$ are imposed (this entails separability of~$L_{M^*}$, see~\cite{Aneta} and also~\cite{gwiazda-ren-ell,gwiazda-ren-para}).  Significantly more difficulties can be expected, when the modular function has growth far from polynomial.  Recall that in Musielak-Orlicz spaces with modular function of a growth not comparable to a power function, we have no factorization of the norms like in the Bochner spaces (coming from composition of the Lebesgue norm), see~\cite{IC-pocket}.

In our study the principal role is played by the choice of proper topology and by obtaining a relevant approximation theorems. In the reflexive Orlicz spaces ($M,M^*\in\Delta_2$) the strong  topology coincides with the so-called modular one.  Otherwise, in the Orlicz setting without prescribed growth the relevant topology for PDEs is this weaker modular topology. Indeed, in his seminal paper~\cite{Gossez} Gossez proved that weak derivatives in the Orlicz-Sobolev spaces are strong derivatives with respect to the modular topology. On the other hand, it is well known in the inhomogeneous setting of variable exponent space, as well as of the double-phase space, one is equipped with the density of the smooth functions only if the interplay between the behaviour of the modular function with respect to each of the variables is balanced. It is closely related to the so-called Lavrentiev's phenomenon which occurs when the infimum of a variational functional taken over space of smooth functions is strictly larger than the infimum over the (larger) space of functions on which the functional is defined, see~\cite{LM}. The notion of the phenomenon became naturally generalised to~describe the situation, where functions from certain spaces cannot be approximated by regular ones. For examples of functions  which cannot be modularly approximated in the inhomogeneous spaces see~\cite[Example~3.2]{ZV} by Zhikov and~\cite[Theorem~4.1]{min-double-reg1} by Colombo and Mingione. 

 Let us stress that kind of the Meyers-Serrin theorem, saying that weak derivatives are strong ones with respect to the modular topology, in the Musielak-Orlicz spaces holds only in absence of~Lavrentiev's phenomenon and this is the scope we work in.  To deal with this problem, we provide the modular approximation theorem provided the asymptotic behaviour of the modular function is sufficiently balanced. The condition is optimal within some special cases (variable exponent, double phase together with its borderline case).

\subsubsection*{PDEs in generalised Orlicz spaces}

The study of nonlinear boundary value problems in~non-reflexive Orlicz-Sobolev-type setting originated in the works of Donaldson~\cite{Donaldson} and Gossez~\cite{Gossez2,Gossez3,Gossez}. We refer to a very nice survey on elliptic problems~\cite{Mustonen} by Mustonen and Tienari, while the most relevant reference on parabolic ones are~\cite{ElMes,ElMes2} by Elmahi and Meskine. For the recent advances in this direction we refer e.g. to~\cite{BDMS-arma}. The~case of~vector Orlicz spaces with fully anisotropic modular function is considered starting from~\cite{Ci-fully,Ci-sym} and applied in studies on existence and regularity~\cite{Alb-CPDE11,AlBlFe-p,ACCZG,AlCi-aniso,BarCi-aniso}. For symmetrization-free approach in the setting let us refer to~\cite{Gparabolic,le-ex}. Admitting additional space inhomogeneity in PDEs is considered in e.g.~\cite{gwiazda-ren-ell,gwiazda-ren-para,pgisazg1,pgisazg2}, whereas time and space inhomogeneity to our best knowledge can be found only in the isotropic space in~\cite{ASGpara}.

On the other hand, investigations without   structural conditions of $\Delta_2$--type and thus -- considering nonreflexive spaces -- was also done when the modular function was trapped between some power-type functions  usually briefly described as $p,q$-growth. This direction comes from the fundamental papers~\cite{Marc1,Marc2} by Marcellini and despite it is well understood area it is still an active field especially from the point of view of modern calculus of variations, see e.g.~\cite{ELM,HPHPAK,EMM,EMM2,bcm17,min-double-reg1}.

\subsubsection*{Our objectives}

 Let us stress that the main challenge we face is considering the setting changing with time. Namely,  the modular function $M=M(t,x,\nabla u)$ controlling the growth of the operator and defining Musielak-Orlicz space we investigate depends on the position in the time-space domain $\Omega_T \subset \r^{N+1}$. The basic examples of such a problem would be, besides the variable exponent space $L^{p(\cdot,\cdot)}$ with exponent dependent on $(t,x)\in \OT$, the variable exponent double-phase space $L_{a(\cdot,\cdot)}^{p(\cdot,\cdot),q(\cdot,\cdot)}$ with $p,q:\Omega_T\to (1,\infty)$ and somewhere disappearing weight $a(t,x):\Omega_T\to\rp$, or its Orlicz analogues (see Examples~\ref{ex:Ordp} and~\ref{ex:weOr}). However, our approach is not restricted to the polynomial growth and it is still possible to admit inhomogeneity of a general form.

  Moreover, we admitt the modular function to be fully anisotropic, i.e. $M=M(t,x,\nabla u)$ instead of $M=M(t,x,|\nabla u|)$.   The typical example is~$M(x,\nabla u)=\sum_{i=1}^N |u_{x_i}|^{p_i(x)}$ with various variable exponents in distinct directions. A modular function is called fully anisotropic, if it does not admitt decomposition by separation of roles of~coordinates. The two-dimensional example of fully anisotropic function provided in~\cite{Trud-Ex} is\[B(\xi)=|\xi_1-\xi_2|^\alpha+|\xi_1|^\beta\log^\delta(c+|\xi_1|),\qquad \alpha,\beta\geq 1,\] and $\delta\in \R$ if $\beta>1$, or $\delta>0$ if $\beta=1$, with $c>>1$ large enough to ensure convexity. 
  
In the fully anisotropic Musielak-Orlicz setting the choice of proper functional setting is not obvious. When gradient is considered in the anisotropic space, the function itself can be assumed to belong to various different isotropic spaces. In the anisotropic Orlicz case we can use symmetrization techniques to get optimal Sobolev embedding~\cite{Ci-sym}, but in anisotropic and inhomogeneous Musielak-Orlicz spaces there is no such result.  We choose the most intuitive classical Lebesgue's space. Thus, the framework we investigate involves the functional spaces
\[
\begin{split} \VTM  &=\{u\in L^1(0,T;W_0^{1,1}(\Omega)):\ \nabla u\in L_M(\OT;\rn)\},\\
 \VTMi & =\{u\in L^\infty(0,T; L^2(\Omega))\cap L^1(0,T;W^{1,1}_0(\Omega)):\ \nabla u\in L_M(\OT;\rn)\}\\
 & =\VTM \cap L^\infty(0,T; L^1(\Omega)).
\end{split}
\]
The definition of Musielak-Orlicz space $L_M$  generated by the modular $N$-function $M:[0,T]\times\Omega\times\rn\to\R$ is provided in Section \ref{sec:mo spaces}. Again due to anisotropy and for clarity of the reasoning, we refrain from looking for the biggest space where the data can be considered. 

\medskip

We prove that there exists a unique weak solution to the problem with bounded data, i.e. \begin{equation}\label{eq:bound}
\left\{\begin{array}{ll}
\partial_t u -\dv A (t,x,\nabla u )= f & \ \mathrm{ in}\  \OT,\\
u(t,x)=0 &\ \mathrm{  on} \ (0,T)\times\partial\Omega,\\
u (0,\cdot)=u_{0}(\cdot) & \ \mathrm{ in}\  \Omega,
\end{array}\right.
\end{equation} 
where $[0,T]$ is a finite interval, $\Omega$ is a bounded Lipschitz domain in $ \rn$, $N>1$, $f\in L^\infty(\OT)$, $u_0\in L^\infty(\Omega)$. We consider $A$~belonging to an Orlicz class with respect to the last variable. Namely, we assume that function $A:[0,T]\times\Omega\times\rn\to\rn$  satisfies the following conditions.
\begin{enumerate}[($\mathcal{A}$1)]
\item \label{A1} $A$ is a Carath\'eodory's function, i.e. it is measurable w.r. to $(t,x)\in \OT$ and continuous w.r. to $\xi$;
\item \label{A2} \textbf{Growth and coercivity.} There exists an $N$-function  $M:[0,T]\times\Omega\times\rn\to\r$ and a constant $c_A>0$ such that for all $\xi\in\rn$ we have
\[ M(t,x,\xi)\leq A(t,x,\xi)\xi  \qquad\text{and}\qquad c_A M^*(t,x,A(t,x,\xi))\leq  M(t,x,\xi),\]
where $M^*$ is conjugate to $M$ (see Definitions~\ref{def:Nf} and~\ref{def:conj}).
\item \label{A3} \textbf{Weak monotonicity.} For all $\xi,\eta\in\rn$ and $x\in\Omega$ we have
\[(A(t,x,\xi) - A(t,x, \eta)) \cdot (\xi-\eta)\geq 0.\]
\end{enumerate}
By a weak solution to~\eqref{eq:bound} we mean a function $u \in \VTMi$, such that for any $\vp\in C_c^\infty([0,T)\times \Omega)$
\begin{equation}\label{weak-bound}
-\iOT u \partial_t \vp\, dx\, dt- \iO u(0)\vp(0)\, dx+\iOT A (t,x,\nabla u)\cdot \nabla \vp \, dx\, dt= \iOT f\vp\, dx\, dt.\end{equation} 

We recall that in order to capture arbitrary growth conditions we need to exclude the Lavrentiev phenomenon. For this we impose a certain type of balance condition of $M(t,x,\xi)$ capturing interplay between the behaviour of $M$ for large $|\xi|$ and small changes of the time and the space variables. We start with fully anisotropic conditions ($\mathcal{M}$) and ($\mathcal{M}_p$) which take very intuitive form in the isotropic setting, see conditions ($\mathcal{M}^{iso}$) or ($\mathcal{M}_p^{iso}$) in the Theorem~\ref{theo:main0} below. Recall that the instances  are log-H\"older continuity of variable exponent or optimal closeness condition for powers in double phase spaces.

\medskip

In the fully anisotropic case we shall consider the modular functions satisfying a balance condition.

\begin{enumerate}
\item[($\mathcal{M}$)] \label{M}  Suppose that there exists a function  $\Theta :[0,1]^2\to\rp$ nondecreasing with respect to each of the variables, such that
\begin{equation}
\label{ass:M:vp}  \limsup_{\delta\to 0^+} \Theta (\delta,\delta^{-N})<\infty ,\end{equation} which express the relation between $M(t,x,\xi )$ and \begin{equation}
\label{MIQ}M_{I,Q}(\xi ):= {\essinf}_{\substack{t\in  {I}\cap[0,T],\\ x\in Q\cap\Omega}}M(t,x,\xi ).
\end{equation} We assume that there exist $\xi_0\in\rn$ and $\delta_0>0$, such that for every interval $I\subset\r$, such that $|I|<\delta<\delta_0,$ and every cube $Q\subset \rn$ with ${\rm diam}\, Q<4\delta\sqrt{N}$
\begin{equation}
\label{ass:M:reg:IQ}
\frac{M(t,x,\xi )}{(M_{I,Q})^{**}(\xi)}
\leq \Theta  \left(\delta,  |\xi| \right)\quad\text{for a.e. }t\in I,\ \ \text{a.e. } x\in Q\cap\Omega,\ \text{ and all }\ \xi\in\rn:\ |\xi|>|\xi_{0}|,
\end{equation}
where by $(M_{I,Q})^{**}(\xi)=({(M_{I,Q}(\xi))}^*)^* $, we denote the greatest convex minorant of the infimum from~\eqref{MIQ} (coinciding with the~second conjugate cf. Definition~\ref{def:conj}).
\end{enumerate}

\noindent Nonetheless, when the modular function has at least power-type growth, we relax ($\mathcal{M}$) as follows.

\begin{enumerate}[($\mathcal{M}_p$)]
\item \label{Mp} Suppose that for $\xi\in\rn$, such that $|\xi|>|\xi_p|$ \begin{equation}
\label{M>p}
M(t,x, \xi )\geq c_{gr}|\xi|^p \qquad\text{with }\quad  p>1\text{ and }\  c_p>0 
\end{equation}
and  that there exists a function   $\Theta_p :[0,1]^2\to\rp$ nondecreasing with respect to each of the variables, such that~\eqref{ass:M:reg} holds with
\begin{equation} \label{ass:M:reg-p} \limsup_{\delta\rightarrow0^+}
\Theta_p (\delta,\delta^{-\frac{N}{p}})<\infty .
\end{equation}
\end{enumerate}It should be stressed that  ($\mathcal{M}$) (resp.~($\mathcal{M}_p$)) is applied only in~the~proof of~approximation result --- Theorem~\ref{theo:approx-sp}. For the rest of the reasoning, it suffices to ensure modular density of smooth functions and that $M$ is an $N$-function (see Definition~\ref{def:Nf}). 

\medskip

In the previous paper of the authors~\cite{pgisazg2} the corresponding equation is studied in the spaces not depending on the time variable and with merely integrable data. Moreover, the balance condition is of log-H\"older type, i.e. it describes more narrow class of admissible modular functions (in particular not covering the sharp range of parameters in the double phase space).

 Another similar result was obtained via completely different method in~\cite{ASGpara} for the parabolic problem  posed in the isotropic Musielak--Orlicz setting formulated in the language of maximal monotone graph. Further differences concern facts that  we admitt fully anisotropic setting, which implies additional challenges in approximation (excluding Lavrentiev's phenomenon) and we do not restrict ourselves to domains with $C^2$-boundary. Moreover, unlike our study, the balance condition in~\cite{ASGpara} is imposed on both $M$ and $M^*$ and is of log-H\"older type.

\subsubsection*{Main results}

\medskip

Since the conditions in the isotropic setting are easier to interpret and sometimes optimal, we would like to~present first this special case of the existence result below.

\begin{theo}[Isotropic setting] \label{theo:main0}
Suppose $[0,T]$ is a finite interval, $\Omega$ is a bounded Lipschitz domain in $ \rn$, $f\in L^\infty(\OT)$, $u_0\in L^\infty(\Omega)$, function $A$ satisfies assumptions ($\mathcal{A}$\ref{A1})--($\mathcal{A}$\ref{A3}) with  a locally integrable $N$-function~$M:[0,T]\times\Omega\times\r\to\r$. Assume further that at least one of the following assumptions holds:
\begin{enumerate}
\item[($\mathcal{M}^{iso}$)]  (arbitrary growth) there exists a function  $\Theta^{iso}:\rp^2\to\rp$ nondecreasing with respect to each of the variables, such that
    \begin{equation} \label{ass:M:Th:lim:iso}
    \limsup_{\delta\to 0^+} \Theta^{iso} (\delta, \delta^{-N})<\infty,
    \end{equation}
    and
    \begin{equation} \label{ass:M:reg:iso}
     \frac{M(t,x,s)}{M(s,y,s)}
    \leq \Theta^{iso} \left(|t-s|+c_{sp} |x-y | , s\right)
    \end{equation}

\item[($\mathcal{M}^{iso}_p$)] (at least power-type growth)
    there exists a function  $\Theta^{iso}_p:\rp^2\to\rp$, nondecreasing with respect to each of the variables, satisfying
    \begin{equation} \label{ass:M:Th:lim:iso:p}
    \limsup_{\delta\to 0^+} \Theta^{iso} (\delta, \delta^{-N/p})<\infty,
    \end{equation}   such that for all $s>s_p$
    \begin{equation} \left\{\begin{array}{l}M(t,x, s )\geq c_{gr}\,s^p \qquad\text{with }\  p>1\text{ and }\  c_{gr}>0,\\
    \label{ass:M:reg:iso:p}
    \frac{M(t,x,s)}{M(s,y,s)}
    \leq \Theta_p^{iso} \left(|t-s|+c_{sp} |x-y | , s\right)\end{array}\right.
    \end{equation}

\item[($\mathcal{M}^{iso}_{\Delta_2\nabla_2}$)] (reflexive case) $M,M^*\in\Delta_2$.
\end{enumerate}
Then there exists a unique $u \in \VTMi$ solving~\eqref{eq:bound}, that is satisfying~\eqref{weak-bound} with any $\vp\in C_c^\infty([0,T)\times \Omega)$.
\end{theo}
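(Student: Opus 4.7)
The strategy is to derive Theorem~\ref{theo:main0} as a specialization of the general anisotropic existence result established later in the paper. In the reflexive case $(\mathcal{M}^{iso}_{\Delta_2\nabla_2})$, reflexivity and separability of $L_M$ are automatic, density of smooth functions is classical, and a Galerkin scheme combined with a Minty--Browder argument suffices; so the substantive work concerns the two nonreflexive cases. The first step I would carry out there is to verify that the isotropic balance conditions $(\mathcal{M}^{iso})$ and $(\mathcal{M}^{iso}_p)$ imply the anisotropic ones $(\mathcal{M})$ and $(\mathcal{M}_p)$, respectively. For scalar isotropic $M$ one has $M_{I,Q}(\xi)=\widetilde{M}_{I,Q}(|\xi|)$ with $\widetilde{M}_{I,Q}$ the essential infimum in $(t,x)\in I\times(Q\cap\Omega)$, and its greatest convex minorant $(M_{I,Q})^{**}$ is controlled from below by $\widetilde{M}_{I,Q}(|\xi|)$ outside a ball of radius $|\xi_0|$. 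Since $|I|<\delta$ and $\mathrm{diam}\,Q<4\delta\sqrt{N}$ force $|t-s|+c_{sp}|x-y|\le C\delta$ for a.e.\ representatives, applying \eqref{ass:M:reg:iso} pointwise yields \eqref{ass:M:reg:IQ} with $\Theta(\delta,|\xi|)=\Theta^{iso}(C\delta,|\xi|)$, and the growth \eqref{ass:M:vp} descends from \eqref{ass:M:Th:lim:iso}.

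The second step is the construction of approximations. I would perturb the vector field by $A_n(t,x,\xi)=A(t,x,\xi)+\tfrac1n|\xi|^{p_0-2}\xi$ (with $p_0$ large enough so that the associated modular lies in $\Delta_2\cap\nabla_2$) and regularise the data by truncation and mollification, producing $f_n\in L^\infty$ and $u_{0,n}\in L^\infty\cap H^1_0$. Classical monotone-operator theory on a reflexive Sobolev space yields a unique $u_n$ solving the perturbed problem. Testing with $u_n$ delivers the modular energy estimate
\begin{equation*}
\tfrac12\|u_n(t)\|_{L^2(\Omega)}^2+\iOT M(t,x,\nabla u_n)\,dx\,dt+\tfrac1n\iOT|\nabla u_n|^{p_0}\,dx\,dt\le C(\|f\|_\infty,\|u_0\|_\infty),
\end{equation*}
while testing against truncations of $u_n$ in a Stampacchia-type iteration, together with $f,u_0\in L^\infty$, yields a uniform bound $\|u_n\|_{L^\infty(\OT)}\le C$. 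The modular bound gives, along a subsequence, weak-$*$ convergence $\nabla u_n \to \nabla u$ in $L_M$, and an Aubin--Lions argument applied to the equation furnishes strong convergence of $u_n$ in $L^1(\OT)$ and $u_n\to u$ almost everywhere.

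The crux, and the step I expect to be hardest, is identifying the weak limit of $A_n(t,x,\nabla u_n)$ with $A(t,x,\nabla u)$ in the nonreflexive setting. I would follow the Minty--Browder scheme: using weak monotonicity $(\mathcal{A}\ref{A3})$, for every smooth test field $\eta$,
\begin{equation*}
\iOT \bigl(A_n(t,x,\nabla u_n)-A_n(t,x,\eta)\bigr)\cdot(\nabla u_n-\eta)\,dx\,dt\ge 0,
\end{equation*}
and one passes to the limit after rewriting the cross-term $\iOT A_n(t,x,\nabla u_n)\cdot\nabla u_n\,dx\,dt$ via the energy identity, which requires an integration-by-parts formula suitable for functions in $\VTMi$ with a modular varying in $t$. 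This passage is legitimate only if $u$ (and any candidate $\eta$) can be approximated modularly by smooth functions, and this is precisely the content of the modular approximation theorem whose hypothesis is the balance condition $(\mathcal{M})$ or $(\mathcal{M}_p)$ verified in Step~1 — here the absence of Lavrentiev's phenomenon is indispensable. Once $A(t,x,\nabla u)$ is identified as the weak-$*$ limit, the limit passage in \eqref{weak-bound} is direct, and uniqueness follows by subtracting two solutions, testing with a truncation of their difference, and invoking monotonicity together with Gronwall.
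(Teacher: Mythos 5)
Your overall architecture (reduce the isotropic conditions to $(\mathcal{M})$/$(\mathcal{M}_p)$, regularize, derive modular a priori bounds, identify the limit by Minty--Browder using an integration-by-parts formula valid in absence of Lavrentiev's phenomenon, and specialize the reflexive case) is the same as the paper's, which proves Theorem~\ref{theo:main0} by combining Lemma~\ref{lem:Mass} with Theorem~\ref{theo:bound}. However, your Step~2 contains a gap that would make the argument fail exactly in the case $(\mathcal{M}^{iso})$ of arbitrary growth. A perturbation $\frac1n|\xi|^{p_0-2}\xi$ with a \emph{fixed} exponent $p_0$ cannot dominate a general $N$-function $M$ (take $M$ of exponential type): then $W^{1,p_0}_0$ does not embed into the Orlicz class of $M$, the map $v\mapsto -\dv\big(A(t,x,\nabla v)+\tfrac1n|\nabla v|^{p_0-2}\nabla v\big)$ does not send the reflexive Sobolev space into its dual (by $(\mathcal{A}2)$ one only controls $M^*(t,x,A(t,x,\nabla v))$ by $M(t,x,\nabla v)$, which need not be finite for $v\in W^{1,p_0}$), so ``classical monotone-operator theory on a reflexive Sobolev space'' is not applicable, and the approximate gradients need not lie in $L_M(\OT;\rn)$, so the uniform modular estimate and the weak-$*$ compactness in $L_M$ you rely on are not available. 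This is precisely why the paper regularizes with $A_\theta(t,x,\xi)=A(t,x,\xi)+\theta\,\bn m(\xi)$, where $m$ is an isotropic $N$-function growing essentially more rapidly than $M$ with $m(|\xi|)\geq M(t,x,\xi)$, and invokes the Elmahi--Meskine existence result in the Orlicz space $W^1L_{\overline m}$ (Corollary~\ref{coro:ElMes}, Proposition~\ref{prop:reg-bound}); the extra terms $\theta m(\nabla u^\theta)$ and $\theta m^*(\bn m(\nabla u^\theta))$ in the energy equality are then what make the limit passage $\theta\to0$ work.

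Two further points. First, your reduction in Step~1 glosses over the only nontrivial part of Lemma~\ref{lem:Mass}: the claim that $(M_{I,Q})^{**}$ is ``controlled from below by'' the infimum $M_{I,Q}$ is not automatic (the convex envelope can lie strictly below the infimum on intervals where it is affine), and the bound $M_{I,Q}(s)/(M_{I,Q})^{**}(s)\leq\Theta^{iso}(c\delta,s)$ is itself proved in the paper by an affine-segment argument that uses the isotropic balance condition a second time, which is why the resulting $\Theta$ is $(\Theta^{iso}(c\delta,s))^2$ rather than $\Theta^{iso}(C\delta,s)$ (harmless for the $\limsup$, but the argument must be supplied). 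Second, two of your auxiliary tools are doubtful in this generality: the paper explicitly avoids Aubin--Lions (it is not at one's disposal without growth conditions), and its limit identification needs no a.e.\ convergence of $u^\theta$, only the weak-$*$ convergences, the energy inequality and the monotonicity trick; likewise, uniqueness cannot be obtained by simply testing with a truncation of the difference of two solutions and Gronwall, since $A$ is only weakly monotone and such test functions are not admissible directly --- the paper instead proves a comparison principle (Theorem~\ref{prop:comp-princ}) through the integration-by-parts formula with the two-parameter test functions $H_\delta(T_{l+1}(v^1)-T_{l+1}(v^2))\,\bt(t)$.
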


Our most general result reads as follows.

\begin{theo}[Fully anisotropic setting]
\label{theo:bound} Suppose $[0,T]$ is a finite interval, $\Omega$ is a bounded Lipschitz domain in $ \rn$,   $N>1$,  $f\in L^\infty(\OT)$, $u_0\in L^\infty(\Omega)$, function $A$ satisfy assumptions ($\mathcal{A}$\ref{A1})--($\mathcal{A}$\ref{A3}) with  a locally integrable $N$-function~$M:[0,T]\times\Omega\times\rn\to\rp.$ Assume further that at least one of the following assumptions holds:
\begin{enumerate}
\item[(i)] $M$ satisfies the condition $(\mathcal{M})$ or $(\mathcal{M}_p)$;
\item[(ii)] $M,M^*\in\Delta_2$.
\end{enumerate} Then there exists a unique $u \in \VTMi$ solving~\eqref{eq:bound}, that is satisfying~\eqref{weak-bound} with any $\vp\in C_c^\infty([0,T)\times \Omega)$.
\end{theo}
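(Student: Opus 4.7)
My plan is to establish Theorem~\ref{theo:bound} by a regularization/limit-passage scheme in which the delicate points are dictated by non-reflexivity, full anisotropy, and time--space inhomogeneity of the modular function. First I would replace $A$ by a uniformly more coercive approximation, for instance $A_\varepsilon(t,x,\xi)=A(t,x,\xi)+\varepsilon|\xi|^{q-2}\xi$ with $q$ large enough so that the auxiliary problem sits in the reflexive space $L^q(0,T;W^{1,q}_0(\Omega))\cap L^\infty(0,T;L^2(\Omega))$. For each $\varepsilon>0$ existence of $u_\varepsilon$ follows by a Galerkin scheme in a countable basis combined with the Browder--Minty theorem, using monotonicity~($\mathcal{A}$\ref{A3}) on the finite-dimensional subspaces and standard compactness to pass with the dimension.

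Next I would extract $\varepsilon$-uniform estimates. Testing with a Lipschitz truncation $T_k(u_\varepsilon)$ and invoking boundedness of $f$ and $u_0$ yields a uniform $L^\infty(\OT)$ bound via a Stampacchia argument; testing with $u_\varepsilon$ itself and using the inequality $M\leq A\cdot\xi$ from~($\mathcal{A}$\ref{A2}) produces a uniform bound in $L^\infty(0,T;L^2(\Omega))$ together with a uniform modular bound $\iOT M(t,x,\nabla u_\varepsilon)\,dx\,dt\leq C$; the conjugate inequality of~($\mathcal{A}$\ref{A2}) then gives a uniform modular bound for $A(t,x,\nabla u_\varepsilon)$ in $L_{M^*}$. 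Consequently, along a subsequence, $u_\varepsilon\to u$ weakly-$*$ in $L^\infty(\OT)$, $\nabla u_\varepsilon\to \nabla u$ weakly-$*$ in $L_M(\OT;\rn)$, and $A(t,x,\nabla u_\varepsilon)\to \bar A$ weakly-$*$ in $L_{M^*}(\OT;\rn)$. From the equation $\partial_t u_\varepsilon$ is controlled in a suitable negative-order space, and an Aubin--Lions--Simon type argument delivers strong convergence $u_\varepsilon\to u$ in $L^2(\OT)$.

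The core step is identifying $\bar A = A(t,x,\nabla u)$ via Minty's monotonicity trick in modular form. For this I would invoke the integration-by-parts formula established earlier in the paper to write both the approximate and the limiting energy identities. Combining them with weak lower semicontinuity of $\tfrac12\|\cdot\|_{L^2(\Omega)}^2$ produces
\begin{equation*}
\limsup_{\varepsilon\to 0}\iOT A(t,x,\nabla u_\varepsilon)\cdot\nabla u_\varepsilon\,dx\,dt\leq \iOT \bar A\cdot\nabla u\,dx\,dt.
\end{equation*}
Then, for every $w\in \VTMi$ which can be modularly approximated by smooth functions, monotonicity gives
\begin{equation*}
\iOT\bigl(\bar A-A(t,x,\nabla w)\bigr)\cdot(\nabla u-\nabla w)\,dx\,dt\geq 0,
\end{equation*}
and the substitution $w=u\pm\lambda\vp$ with $\vp\in C_c^\infty$ followed by $\lambda\to 0^+$ forces $\bar A=A(t,x,\nabla u)$ almost everywhere. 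Uniqueness is obtained by subtracting two solutions, testing with an admissible truncation of their difference, and applying~($\mathcal{A}$\ref{A3}).

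The principal obstacle is to justify both the integration-by-parts formula and the admissibility of $w=u\pm\lambda\vp$ in the non-reflexive, anisotropic, time-varying setting. This requires constructing modular approximants of elements of $\VTMi$ by smooth functions, a chain rule for $\langle \partial_t u, u\rangle$ under a time--space mollification that respects the $(t,x)$-dependence of $M$, and the absence of Lavrentiev's phenomenon so that these approximants are dense in the modular topology. Exactly at this point the balance assumption~($\mathcal{M}$) or~($\mathcal{M}_p$) enters, through the approximation result Theorem~\ref{theo:approx-sp}; in the alternative reflexive case $M,M^*\in\Delta_2$, modular and norm convergence coincide and this density step is standard.
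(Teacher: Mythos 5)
There are two genuine gaps, both exactly at the points where the non-doubling structure bites.

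First, the regularization $A_\varepsilon(t,x,\xi)=A(t,x,\xi)+\varepsilon|\xi|^{q-2}\xi$ with ``$q$ large enough'' cannot launch the scheme for a general $N$-function $M$: since no growth restriction is imposed, $M$ may grow faster than every polynomial (e.g. the exponential examples in the introduction), so there is no finite $q$ for which the auxiliary problem ``sits in'' $L^q(0,T;W^{1,q}_0(\Omega))$. Concretely, by ($\mathcal{A}$2) the flux $A(t,x,\nabla u_\varepsilon)$ is controlled only in $L_{M^*}(\OT;\rn)$, and when $M$ is super-polynomial $M^*$ grows slower than $s^{q'}$, so $L_{M^*}\not\subset L^{q'}$; hence $\dv A(t,x,\nabla u_\varepsilon)$ does not land in $(W^{1,q}_0(\Omega))^*$, the Lions duality pairing $\langle\partial_t u_\varepsilon,u_\varepsilon\rangle$ is unavailable, and the Galerkin/Browder--Minty limit passage at fixed $\varepsilon$ already requires the whole non-reflexive machinery you were trying to postpone. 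This is precisely why the paper regularizes instead with $\theta\bar{\nabla}m(\xi)$, where $m$ is an isotropic $N$-function growing essentially more rapidly than $M$ with $m(|\xi|)\geq M(\xi)$, verifies the growth condition $|A_\theta|\lesssim(\overline{m}^*)^{-1}(\overline{m}(c\,\xi))$ relative to $m$, and invokes the Elmahi--Meskine existence theorem (Corollary~\ref{coro:ElMes}) in $W^1L_{\overline m}$, where the duality and the energy equality~\eqref{en:eq:ElMes} are legitimate. Your uniform estimates, the vanishing of the regularizing term, and the use of the integration-by-parts formula plus weak lower semicontinuity of the $L^2$-norm to reach the limsup inequality are in line with the paper; on the other hand the Stampacchia $L^\infty(\OT)$ bound and the Aubin--Lions step are neither needed nor easily available here (the paper explicitly avoids Aubin--Lions, and no strong convergence of $u^\theta$ is used).

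Second, the final Minty step as you state it does not go through. For a general $w\in\VTMi$ the products $(\bar A-A(t,x,\nabla w))\cdot(\nabla u-\nabla w)$ need not be integrable, since $A(\cdot,\cdot,\nabla w)\in L_{M^*}$ is only guaranteed when $\iOT M(t,x,\nabla w)\,dx\,dt<\infty$; and the substitution $w=u\pm\lambda\vp$ with $\lambda\to0^+$ requires passing to the limit in $\iOT A(t,x,\nabla u\pm\lambda\nabla\vp)\cdot\nabla\vp\,dx\,dt$, for which one needs uniform integrability of $M^*(t,x,A(t,x,\nabla u\pm\lambda\nabla\vp))$, controlled via ($\mathcal{A}$2) by $M(t,x,\nabla u\pm\lambda\nabla\vp)$ --- a quantity that, absent $\Delta_2$, is not controlled by the finite modular of $\nabla u$ (the Orlicz class is not linear and the modular is not continuous under such perturbations). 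The paper's Lemma~\ref{lem:mon} is designed around exactly this obstruction: the monotonicity inequality~\eqref{mono:int} is derived only for bounded $\eta\in L^\infty(\OT;\rn)$, and the identification uses perturbations $\eta=\nabla u\,\mathds{1}_{\{|\nabla u|\le i\}}+hz\,\mathds{1}_{\{|\nabla u|\le j\}}$ with $z\in L^\infty$, so that on the relevant set the argument of $A$ stays bounded, Lemma~\ref{lem:unif} gives uniform integrability, and the Vitali convergence theorem justifies $h\to0$. You should either reproduce that truncated-perturbation argument or restrict from the outset to bounded test directions; the ``$u\pm\lambda\vp$'' shortcut is only valid in the reflexive case ($M,M^*\in\Delta_2$), which is alternative (ii), not the general one.
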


Directly from Theorem~\ref{theo:bound}  we get a great load of examples, when the existence follows.

\begin{coro}[Skipping ($\cal M$) / $(\mathcal{M}_p)$ -- Orlicz case]
In the pure Orlicz case, i.e. when \[M(t,x,\xi)=M(\xi),\] the balance conditions $(\mathcal{M})$ or $(\mathcal{M}_p)$ do  not carry any information and can be skipped. Therefore, as a direct consequence of the above theorem we get existence of weak solutions to parabolic problem~\eqref{eq:bound} in the anisotropic Orlicz space without growth restrictions. 
\end{coro}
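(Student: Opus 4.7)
The plan is to verify that when $M(t,x,\xi)=M(\xi)$ the balance condition $(\mathcal{M})$ is satisfied trivially, so that Theorem~\ref{theo:bound}(i) delivers the conclusion without any extra work on the modular. First I would observe that for every interval $I\subset\R$ and every cube $Q\subset\rn$ the essential infimum in~\eqref{MIQ} collapses to
\[
M_{I,Q}(\xi)=\essinf_{\substack{t\in I\cap[0,T]\\ x\in Q\cap\Omega}} M(\xi)=M(\xi),
\]
simply because $M$ has no dependence on $(t,x)$. Being an $N$-function in $\xi$, $M$ is convex and lower semicontinuous, so its second Young conjugate recovers $M$ itself, giving $(M_{I,Q})^{**}(\xi)=M(\xi)$ (cf.\ Definition~\ref{def:conj}).

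Next I would plug this into~\eqref{ass:M:reg:IQ}: the ratio $M(t,x,\xi)/(M_{I,Q})^{**}(\xi)$ equals $M(\xi)/M(\xi)=1$ identically on $\OT\times\rn$. Choosing $\Theta(\delta,s)\equiv 1$ — a constant function, trivially nondecreasing in each variable — satisfies the bound~\eqref{ass:M:reg:IQ} with any $\xi_0\in\rn$ and any $\delta_0>0$, and of course $\limsup_{\delta\to 0^+}\Theta(\delta,\delta^{-N})=1<\infty$, fulfilling~\eqref{ass:M:vp}. Hence $(\mathcal{M})$ holds, so hypothesis (i) of Theorem~\ref{theo:bound} is met.

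With the remaining hypotheses on the data ($f\in L^\infty(\OT)$, $u_0\in L^\infty(\Omega)$, $\Omega$ bounded Lipschitz, $A$ satisfying $(\mathcal{A}\ref{A1})$--$(\mathcal{A}\ref{A3})$ with the autonomous $M$) being assumed in the corollary, Theorem~\ref{theo:bound} then produces a unique $u\in\VTMi$ satisfying~\eqref{weak-bound} for every test function $\vp\in C_c^\infty([0,T)\times\Omega)$, which is the claim. There is no hard step to single out here: the whole content of the corollary is precisely that autonomous modulars are insensitive to Lavrentiev-type obstructions, so all the genuine work is already buried inside Theorem~\ref{theo:bound} itself — in particular in the modular approximation theorem which, for an $(t,x)$-independent $M$, reduces to the classical Orlicz density result and does not require any balance.
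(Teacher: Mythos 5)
Your proposal is correct and matches the paper's (implicit) reasoning exactly: since $M$ is autonomous, $M_{I,Q}=M$, its second conjugate returns $M$ by convexity, the ratio in~\eqref{ass:M:reg:IQ} is identically $1$, and a constant $\Theta$ verifies $(\mathcal{M})$, so Theorem~\ref{theo:bound}(i) applies directly. The paper offers no separate proof precisely because this is the whole content of the corollary, and your write-up fills in the same trivial verification.
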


\begin{coro}[Skipping ($\cal M$) / $(\mathcal{M}_p)$ -- reflexive case]
Theorem~\ref{theo:bound} provides existence results when $M,M^*\in\Delta_2$, that is e.g. in the following cases.
\begin{itemize}
\item  When  $M=|\xi|^{p}$, with  $1<p <\infty$, in classical Sobolev spaces ($\nabla u\in L^1(0,T;W^{1,p}_0(\Omega)$) for $p$-Laplace problem $\partial_t u -\Delta_p u = f$, as well as for
\[\partial_t u -\dv \big (a(t,x)|\nabla u|^{p-2}\nabla u \big)= f(t,x)\qquad\text{with}\qquad 0<<a<<\infty.\]
\item  When  $M=|\xi|\log^\alpha (1+|\xi|)$ in $L\log^\alpha L,$ $\alpha>0,$ spaces,  for
\[\partial_t u -\dv \Big (a(t,x)\frac{\log^\alpha({\rm e}+|\nabla u|)}{|\nabla u|}\nabla u \Big)= f(t,x)\qquad\text{with}\qquad  0<<a<<\infty.\]
\item When $M=|\xi|^{p(t,x)}$, with  $1<<p <<\infty$ in variable exponent spaces for
\[\partial_t u -\dv \big (a(t,x)|\nabla u|^{p(t,x)-2}\nabla u )= f(t,x)\qquad\text{with}\qquad 0<<a<<\infty,\ 1<<p<<\infty.\]
\item When $M=|\xi|^p+a(t,x)|\xi|^q$, with $1<p,q<\infty$ and $a:\OT\to[0,\infty)$ being a~bounded function possibly touching zero (no matter how irregular it is) in double phase spaces; for
\[\partial_t u -\dv \Big (b(t,x)\big(|\nabla u|^{p-2}\nabla u +a(t,x) (|\nabla u|^{q-2}\nabla u\big)\Big)= f(t,x)\qquad\text{with}\qquad 0<<b<<\infty.\]
\item  When $M(t,x,\xi)=|\xi|^{p(t,x)}+a(t,x)|\xi|^{q(t,x)}$,  with $1<<p(t,x), q(t,x)<<\infty$ and the~function $a\in L^\infty(\OT)$ nonnegative a.e. in $\OT$ in variable exponent double-phase spaces; for
\[\partial_t u -\dv \Big (b(t,x)\big(|\nabla u|^{p(t,x)-2}\nabla u +a(t,x) (|\nabla u|^{q(t,x)-2}\nabla u\big)\Big)= f(t,x)\qquad\text{with}\qquad 0<<b<<\infty.\]
\item  When $M(t,x,\xi)=M_1(\xi)+a(t,x)M_2(\xi)$, where $M_1,M_2$ are (possibly anisotropic) homogeneous $N$-functions, such that $M_1,M_2,M_1^*,M_2^*\in\Delta_2$, and moreover the~function $a\in L^\infty(\OT)$ is nonnegative a.e. in $\OT$ in Orlicz double phase spaces; for
\[\partial_t u -\dv \left(b(t,x)\Big(\frac{M_1(\nabla u)}{|\nabla u|^2}\cdot\nabla u +a(t,x) \frac{M_2(\nabla u)}{|\nabla u|^2}\cdot\nabla u\Big)\right)= f(t,x)\qquad\text{with}\qquad 0<<b<<\infty.\]
\end{itemize}
\end{coro}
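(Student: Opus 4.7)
The plan is a case-by-case verification that in each of the six displayed examples the modular function $M$ is a locally integrable $N$-function with $M, M^* \in \Delta_2$, and that the accompanying vector field $A(t,x,\xi)$ satisfies the structural assumptions ($\mathcal{A}$\ref{A1})--($\mathcal{A}$\ref{A3}). Once this is done, item (ii) of Theorem~\ref{theo:bound} applies directly and delivers the unique weak solution $u \in \VTMi$.

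First I would check $M \in \Delta_2$ uniformly in $(t,x)$ for each listed modular. In the power and weighted-power cases this is the identity $M(t,x,2\xi) \leq 2^{\bar p} M(t,x,\xi)$ with $\bar p$ an upper bound for the exponents, which exists exactly because the shorthand $p<<\infty$ means $p$ is essentially bounded on $\OT$ (and analogously for $q$). For $M(\xi) = |\xi| \log^\alpha(1 + |\xi|)$ the elementary estimate $\log(1+2t) \leq 2\log(1+t)$ for $t$ bounded away from $0$ gives $\Delta_2$ with a constant depending only on $\alpha$. For the (variable-exponent) double-phase and Orlicz double-phase modulars each summand satisfies $\Delta_2$ and the multiplicative coefficient $a \in L^\infty(\OT)$ is bounded, so summing preserves $\Delta_2$. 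Next I would verify $M^* \in \Delta_2$, equivalently $M \in \nabla_2$: in the power-type cases the condition $p, q > 1$ uniformly (the content of $1<<p, q$) yields $\nabla_2$ by the symmetric doubling argument; the $L\log^\alpha L$ case is handled by the same logarithmic inequality; in the Orlicz double-phase example the assumption $M_1^*, M_2^* \in \Delta_2$ is imposed by hypothesis.

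With $M, M^* \in \Delta_2$ established, the assumptions ($\mathcal{A}$\ref{A1})--($\mathcal{A}$\ref{A3}) for the displayed $A$'s are routine. Carath\'eodoricity holds because each operator is a continuous-in-$\xi$ expression with measurable bounded coefficients. The growth and coercivity ($\mathcal{A}$\ref{A2}) follow from Young's inequality for the dual pair $(M, M^*)$ once the bounded weights $b(t,x)$ are absorbed into the constants, exploiting that $0<<b<<\infty$ gives uniform two-sided bounds. Monotonicity ($\mathcal{A}$\ref{A3}) is the standard monotonicity of the gradient of a convex integrand.

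The main delicate point sits in the Orlicz double-phase example, where the coefficient $a$ lives \emph{inside} the modular and is only assumed nonnegative and bounded. Here one must check that $M_1(\xi) + a(t,x) M_2(\xi)$ remains an $N$-function for a.e. $(t,x)$ and that the $\Delta_2$ and $\nabla_2$ constants can be taken independent of $(t,x)$. Both facts follow from the observation that $M_1$ alone already supplies a superlinear lower bound (so vanishing of $a$ cannot destroy the $N$-function property), while on $\{a > 0\}$ the $\Delta_2$ and $\nabla_2$ constants of the sum are controlled by the larger of those of $M_1, M_2$ scaled by factors depending only on $\|a\|_\infty$. Once this uniform reflexivity is in hand, Theorem~\ref{theo:bound}(ii) closes the corollary.
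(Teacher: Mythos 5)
Your overall strategy is the intended one: the paper offers no separate proof of this corollary, treating it as a direct application of Theorem~\ref{theo:bound}(ii) once one checks, case by case, that $M$ is a locally integrable $N$-function with $M,M^*\in\Delta_2$ and that the displayed operators satisfy ($\mathcal{A}$1)--($\mathcal{A}$3); your verification of the power, variable exponent, double phase, variable-exponent double phase and Orlicz double phase bullets (including the uniformity in $(t,x)$ coming from $1<<p,q<<\infty$, $a\in L^\infty$, $0<<b<<\infty$, and from $M_1^*,M_2^*\in\Delta_2$) is exactly what is needed there.

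There is, however, a genuine error in your treatment of the $L\log^\alpha L$ bullet. You claim that $M^*\in\Delta_2$ for $M(\xi)=|\xi|\log^\alpha(1+|\xi|)$ ``by the same logarithmic inequality''; this is false. Writing $B(s)=s\log^\alpha(1+s)$, for any fixed $l>1$ one has $B(ls)/B(s)\to l$ as $s\to\infty$, so the $\nabla_2$ inequality $B(ls)\geq 2\,l\,B(s)$ cannot hold for large $s$; equivalently, the conjugate of $B$ is of exponential type (comparable to $\exp\big(s^{1/\alpha}\big)$ for large $s$) and fails $\Delta_2$. Hence $L\log^\alpha L$ is not reflexive and route (ii) of Theorem~\ref{theo:bound} is not available for that bullet. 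The correct way to close this case is to observe that this $M$ does not depend on $(t,x)$ (the coefficient $a(t,x)$ sits only in the operator $A$, where it is handled by ($\mathcal{A}$2) exactly as you do for the weights $b$), so $M_{I,Q}=M$, the quotient in \eqref{ass:M:reg:IQ} is identically $1$, and condition ($\mathcal{M}$) holds trivially; one then invokes Theorem~\ref{theo:bound}(i), i.e. the same mechanism as in the preceding ``pure Orlicz'' corollary. With that substitution your argument is complete; as written, the doubling claim for $M^*$ in this bullet is a step that would fail.
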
 

When the growth of $M$ is far from polynomial, the meaning of the balance condition can be illustrated by the following examples.

\begin{ex}[Orlicz double phase space without growth restrictions]\label{ex:Ordp} When $M(t,x,\xi)=M_1(\xi)+a(t,x)M_2(\xi)$, where $M_1,M_2$ are (possibly anisotropic) homogeneous $N$-functions without prescribed growth  such that $M_1(\xi)\leq M_2(\xi)$ for $\xi:$ $|\xi|>|\xi_0|$, and moreover the~function $a:\OT\to\rp$ is bounded and has a modulus of continuity denoted by $\omega_a$, we infer existence and uniqueness for solution to the problem
\[\partial_t u -\dv \left(b(t,x)\Big(\frac{M_1(\nabla u)}{|\nabla u|^2}\cdot\nabla u +a(t,x) \frac{M_2(\nabla u)}{|\nabla u|^2}\cdot\nabla u\Big)\right)= f(t,x)\in L^1(\Omega_T)\qquad\text{with}\qquad 0<<b<<\infty,\] 
provided \[\limsup_{\delta\to 0} \omega_a(\delta)\frac{\overline{M}_2(\delta^{-N})}{\underline{M}_1(\delta^{-N})}<\infty,\]
where $\underline{M}_1(s):=\inf_{\xi:\,|\xi|=s}{M}_1(\xi)$ and $\overline{M}_2(s):=\sup_{\xi:\,|\xi|=s}{M}_2(\xi)$,
or -- when $M_1$ has at least power growth -- provided\[\limsup_{\delta\to 0} \omega_a(\delta)\frac{\overline{M}_2(\delta^{-N/p})}{\underline{M}_1(\delta^{-N/p})}<\infty.\]
\end{ex}

 \begin{ex}[Weighted Orlicz spaces without growth restrictions]\label{ex:weOr}
 \rm If $M$ has a form 
\[M(t,x,\xi)=\sum_{i=1}^jk_i(t,x)M_i(\xi)+M_0(t,x,|\xi|),\quad j\in\N,\]
instead of ($\mathcal{M}$) we assume only that $M_0$ satisfies~($\mathcal{M}^{iso}$), all $M_i$ for $i=1,\dots,j$ are $N$-functions and all $k_i$ are positive and satisfy $\frac{k_i(t,x)}{k_i(s,y)}\leq C_i \Theta_i(|t-s|+c_{sp} |x-y | )$ with $C_i>0$ and $\Theta_i:\rp\to\rp$ and $\Theta_i\in L^\infty$ for $i=1,\dots,j$. Then, according to computations in Appendix, we get that $M$ satifies ($\mathcal{M}$)  when we take  \[\Theta(r, s)=\sum_{j=1}^k \Theta_j(r)+\Theta_0(r,s)\qquad\text{with}\qquad \limsup_{\delta\to 0^+}\Theta(\delta,\delta^{-N})<\infty\] In the case of~($\mathcal{M}_p$) we expect $\limsup_{\delta\to 0^+}\Theta(\delta,\delta^{-N/p})<\infty$.
\end{ex}

\begin{rem}
Theorem~\ref{theo:bound} extends earlier results of the authors~\cite[Proposition~3.1]{pgisazg2}, in which an analogue of the equation~\eqref{eq:bound}, for time-independent modular function was considered.  It results from the fact that Theorem~\ref{theo:approx-sp} here extends \cite[Theorem~2.1]{pgisazg2}.  Namely, the balance conditions (${\cal M}$), resp. (${\cal M}_p$), has more general form and the retrieved approximation results hold not only under the log-H\"older condition in the variable exponent spaces, but also within the sharp range of parameters in the closeness condition in the double phase space. We give more details in Section~\ref{sec:absence}. \end{rem}

\subsection*{Methods}

Since the modular function is time-dependent and fully anisotropic, the identification of limits of approximate sequences is non-trivial. We employ the Minty--Browder monotonicity trick (see Lemma~\ref{lem:mon} in Appendix) adapted to the setting which is neither separable, nor reflexive.   The construction of our solutions holds in~the~absence of~Lavrentiev's phenomenon, i.e. when the functions can be approximated by smooth ones. Indeed, we employ the approximation result even to get a priori estimates. Thus, our key tool are results on approximation in modular topology when  asymptotic behaviour of a modular function is sufficiently balanced. Retrieving the known optimal results we exclude the Lavrentiev phenomenon in the variable exponent spaces under log-H\"older continuity assumption and in the double-phase space within the sharp range of parameters.  Section~\ref{sec:absence} is devoted to the absence of Lavrentiev's phenomenon.

In the spaces equipped with the modular function of a general growth, even in the Orlicz case, one is not equipped with Bochner-type factorization of the norm.  Thus, we need to prove the so-called {\it integration-by-parts-formula}.  It is necessary in passing from distributional formulation of~an equation to the particular class of test functions involving the solution itself. 
 We provide it for problems with merely integrable data, which makes it to be of its own interest.  It does not make the proof longer or essentially more complicated and in this form can find deeper application in studies on problems with data below duality.

We use the framework developed in~\cite{pgisazg1,pgisazg2,gwiazda-ren-ell,gwiazda-ren-cor,gwiazda-ren-para}, where elliptic and parabolic problems in Musielak--Orlicz spaces were studied.
Since $M^*\not\in\Delta_2$, the understanding of the dual pairing is not intuitive. Indeed, in the view of (${\cal A}2$) one expects $A(\cdot,\cdot,\nabla(T_k(u)))$ and $\nabla(T_k(u))$ to belong to the dual spaces, which is not true outside the family of doubling modular functions. Relaxing growth condition on the modular function restricts the admissible classical tools, such as the Sobolev embeddings,~the~Rellich-Kondrachov compact embeddings, or~Aubin-Lions Lemma (applied in~\cite{gwiazda-ren-para} to~prove almost everywhere convergence). Let us note that we obtain uniqueness for weakly monotone operator using the comparison principle (Proposition~\ref{prop:comp-princ}).

\section{Analytical framework -- Musielak-Orlicz spaces} \label{sec:mo spaces}

In this section we introduce briefly the necessary information on the background. Some minor lemmas and classical theorems are listed in Appendix.

\medskip

\noindent \textbf{Notation. } We assume $\Omega\subset\rn$ is a bounded Lipschitz domain, $\Omega_T=(0,T)\times\Omega,$ $\Omega_\tau=(0,\tau)\times\Omega.$ If $V\subset\R^K$, $K\in\N$, is a bounded set, then $C_c^\infty(V)$ denotes the class of smooth functions with support compact in $V$.  Let the symmetric truncation $T_k(f)(x)$ be defined as follows\begin{equation}T_k(f)(x)=\left\{\begin{array}{ll}f & |f|\leq k,\\
k\frac{f}{|f|}& |f|\geq k.
\end{array}\right. \label{Tk}
\end{equation}

\begin{defi}[$N$-function]\label{def:Nf} Suppose $\Omega\subset\rn$ is an open bounded set. A~function   $M:[0,T]\times\Omega\times\rn\to\r$ is called an $N$-function if it satisfies the
following conditions:
\begin{enumerate}
\item $ M$ is a Carath\'eodory function (i.e. measurable with respect to $(t,x)\in\OT$ and continuous with respect to the last variable), such that $M(t,x,0) = 0$, $\essinf_{(t,x)\in\OT}M(t,x,\xi)>0$ for $\xi\neq 0$, and $M(t,x,\xi) = M(t,x, -\xi)$ a.e. in $\Omega$,
\item $M(t,x,\xi)$ is a convex function with respect to $\xi$,
\item $\lim_{|\xi|\to 0}\mathrm{ess\,sup}_{(t,x)\in\OT}\frac{M(t,x,\xi)}{|\xi|}=0$,
\item $\lim_{|\xi|\to \infty}\mathrm{ess\,inf}_{(t,x)\in\OT}\frac{M(t,x,\xi)}{|\xi|}=\infty$.
\end{enumerate}
Moreover, we call $M$ a locally integrable $N$-function if additionally for every measurable set $G\subset\OT$ and every $z\in\rn$ it holds that
\begin{equation}
\label{ass:M:int}\int_G M(t,x,z)\,dx\,dt<\infty.
\end{equation}
\end{defi}

\begin{defi}[Complementary function] \label{def:conj}
The complementary~function $M^*$ to a function  $M:[0,T]\times\Omega\times\rn\to\r$ is defined by
\[M^*(t,x,\eta)=\sup_{\xi\in\rn}(\xi\cdot\eta-M(t,x,\xi)),\qquad \eta\in\rn,\ x\in\Omega.\]
\end{defi}

\begin{rem}\label{rem:f*<g*}If $f(x,\xi)\leq g(x,\xi)$, then $g(x,\xi)^*\leq f^*(x,\xi)$.\end{rem}

\begin{lem} If $M$ is an $N$-function and $M^*$ its complementary, we have \begin{itemize}\item
 the Fenchel-Young inequality \begin{equation}
\label{inq:F-Y}|\xi\cdot\eta|\leq M(t,x,\xi)+M^*(t,x,\eta)\qquad \mathrm{for\ all\ }\xi,\eta\in\rn\mathrm{\ and\ a.e.\ }(t,x)\in\OT.
\end{equation} 
\item the generalised H\"older inequality \begin{equation}\label{inq:Holder} \left|\int_{\Omega} \xi\cdot\eta\,dx\right|\leq 2\|\xi\|_{L_M }\|\eta\|_{L_{M^*} }\quad \mathrm{for\ all\ }\ \xi\in L_M(\OT;\rn),\ \eta\in L_{M^*}(\OT;\rn).\end{equation}
\end{itemize}

\end{lem}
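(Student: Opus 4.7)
The Fenchel--Young part is essentially a rewriting of Definition~\ref{def:conj}. The plan is to fix $(t,x)$ in a full-measure subset of $\OT$ and an arbitrary $\eta\in\rn$. For every $\xi\in\rn$ one then has
\[\xi\cdot\eta-M(t,x,\xi)\leq \sup_{\zeta\in\rn}\bigl(\zeta\cdot\eta-M(t,x,\zeta)\bigr)=M^*(t,x,\eta),\]
which rearranges to $\xi\cdot\eta\leq M(t,x,\xi)+M^*(t,x,\eta)$. To upgrade to the absolute value on the left, I would apply the same bound with $-\xi$ in place of $\xi$ and invoke the symmetry $M(t,x,-\xi)=M(t,x,\xi)$ from Definition~\ref{def:Nf}. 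No structural hypothesis on $M$ beyond its being a Carath\'eodory $N$-function is used at this stage.

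For the generalised H\"older inequality the strategy is the Luxemburg normalisation. If $\|\xi\|_{L_M}=0$ or $\|\eta\|_{L_{M^*}}=0$ the inequality is trivial, so assume both norms are positive and set
\[\w\xi:=\xi/\|\xi\|_{L_M},\qquad\w\eta:=\eta/\|\eta\|_{L_{M^*}}.\]
I would apply the Fenchel--Young bound just proved pointwise to $\w\xi,\w\eta$, take absolute values and integrate over $\OT$, obtaining
\[\int_{\OT}|\w\xi\cdot\w\eta|\,dx\,dt\leq\int_{\OT}M(t,x,\w\xi)\,dx\,dt+\int_{\OT}M^*(t,x,\w\eta)\,dx\,dt\leq 2,\]
where the last step is the defining modular bound for the Luxemburg norm. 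Rescaling by $\|\xi\|_{L_M}\|\eta\|_{L_{M^*}}$ and pulling the absolute value out of the integral yields the stated inequality. The scalar version $\left|\int_\Omega \xi\cdot\eta\,dx\right|\leq 2\|\xi\|_{L_M}\|\eta\|_{L_{M^*}}$ follows by the same computation after a slicewise application at a fixed $t$ (or, reading $\int_\Omega$ as $\int_{\OT}$, directly from the display above).

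The only delicate point, and thus the main obstacle, is justifying the modular inequality $\int_{\OT}M(t,x,\w\xi)\,dx\,dt\leq 1$ when the infimum defining $\|\xi\|_{L_M}$ is not attained. I would handle this by choosing $\lambda_n\downarrow\|\xi\|_{L_M}$ with $\int_{\OT}M(t,x,\xi/\lambda_n)\,dx\,dt\leq 1$, noting that $M(t,x,\xi/\lambda_n)$ increases pointwise to $M(t,x,\w\xi)$ thanks to the convexity of $M(t,x,\cdot)$ combined with $M(t,x,0)=0$, and concluding by the monotone convergence theorem. This is the one step that is not bookkeeping; it uses only that $M$ is an $N$-function in the sense of Definition~\ref{def:Nf}, so in particular the argument is independent of reflexivity, of any $\Delta_2$-type regularity, and of the balance conditions $(\mathcal{M})$ or $(\mathcal{M}_p)$.
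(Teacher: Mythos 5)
Your proof is correct, and it is the standard argument: the paper states this lemma without proof, treating it as classical background for Musielak--Orlicz spaces, so there is no paper proof to diverge from. Your handling of the one genuinely non-trivial point --- passing from $\lambda_n\downarrow\|\xi\|_{L_M}$ to the modular bound at the norm itself via monotonicity of $\lambda\mapsto M(t,x,\xi/\lambda)$ (convexity plus $M(t,x,0)=0$), continuity of $M$ in the last variable, and monotone convergence --- is exactly right, and the degenerate case $\|\xi\|_{L_M}=0$ is indeed trivial since the $N$-function conditions force $\xi=0$ a.e.
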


\begin{rem}\label{rem:2ndconj} For any function $f:\r^M\to\r$ the second conjugate function $f^{**}$ is convex and $f^{**}(x)\leq f(x)$. In fact,  $f^{**}$ is a convex envelope of $f$, namely it is the biggest convex function smaller or equal to~$f$.
\end{rem}

\begin{defi}\label{def:MOsp} Let $M$ be a locally integrable $N$-function. We deal with the three  Orlicz-Musielak classes of functions.\begin{itemize}
\item[i)]${\cal L}_M(\OT;\rn)$  - the generalised Orlicz-Musielak class is the set of all measurable functions\\ $\xi:\OT\to\rn$ such that
\[\int_\Omega M(t,x,\xi(t,x))\,dx<\infty.\]
\item[ii)]${L}_M(\OT;\rn)$  - the generalised Orlicz-Musielak space is the smallest linear space containing ${\cal L}_M(\Omega;\rn)$, equipped with the Luxemburg norm
\[||\xi||_{L_M}=\inf\left\{\lambda>0:\int_\OT M\left(t,x,\frac{\xi(t,x)}{\lambda}\right)\,dx\leq 1\right\}.\]
\item[iii)] ${E}_M(\OT;\rn)$  - the closure in $L_M$-norm of the set of bounded functions.
\end{itemize}
\end{defi}
Then
\[{E}_M(\OT;\rn)\subset {\cal L}_M(\OT;\rn)\subset { L}_M(\OT;\rn),\]
the space ${E}_M(\OT;\rn)$ is separable and due to~\cite[Theorem~2.6]{Aneta} the following duality holds \[({E}_M(\OT;\rn))^*=L_{M^*}(\OT;\rn).\]

 We say that an $N$-function $M:[0,T]\times\Omega\times\rn\to\r$ satisfies $\Delta_2$ condition close to infinity (denoted $M\in\Delta_2^\infty$) if there exists a constant $c>0$ and nonnegative integrable function $h:\OT\to\r$ such that for a.e. $(t,x)\in\OT$ it holds 
\begin{equation*}
 M(t,x,2\xi)\leq cM(t,x,\xi)+h(t,x)\qquad\text{for all}\quad \xi\in\rn:\ \ |\xi|>|\xi_0|.
\end{equation*}
If $M\in\Delta_2^\infty$, then
\[{E}_M(\OT;\rn)= {\cal L}_M(\OT;\rn)= {L}_M(\OT;\rn)\]
and $L_M(\OT;\rn)$ is separable. When both  $M,M^*\in\Delta^\infty_2$  then $L_M(\OT;\rn)$ is reflexive, see~\cite{GMWK,gwiazda-non-newt}. 

 We face the problem \textbf{without} this structure.

We say that $m$ \textbf{grows essentially more rapidly} than $M$ if \[ {\overline{M}(s)}/{\overline{m}(s)}\to 0\quad\text{ as }\quad s\to\infty\quad\text{ for }\quad {
\overline{M} (s) = \sup_{ x\in\Omega,\, |\xi|=s}
M (x, \xi)}.\]Let us point out that when $\Omega$ has finite measure and $m$ grows essentially more rapidly than $M$, we have
\begin{equation}
\label{LminEM} 
L_m(\Omega_T;\rn)\subset E_M(\Omega_T;\rn).
\end{equation}

\medskip

We apply the following modular Poincar\'{e}-type inequality.
\begin{theo}[Modular Poincar\'e inequality,~\cite{CGZG}]\label{theo:Poincare}
Let $B:\rp\to\rp$ be an arbitrary function satisfying $\Delta_2$-condition and $\Omega\subset\rn$ be a bounded domain,
then there exist $c^1_P,c^2_P>0$ dependent on $\Omega$ and $N$, such that for every $g\in W^{1,1}(\OT)$, such that $\int_\OT B(|\nabla g|)\,dx\,dt<\infty$, we have
\[\int_\OT B(c^1_P|g|)\,dx\,dt\leq c^2_P
\int_\OT B(|\nabla g|)\,dx\,dt.\]
\end{theo}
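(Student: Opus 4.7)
The plan is to reduce to the classical spatial Poincaré inequality on each time slice $\{t\}\times\Omega$ and then integrate in $t$. As is implicit in the parabolic application, I shall assume $g(t,\cdot)\in W_0^{1,1}(\Omega)$ for a.e.\ $t$ so that $g$ extends by zero to $\r\times\rn$ (this is also what makes the statement non-trivial). Set $d:=\mathrm{diam}(\Omega)$ and $c_P^1:=1/d$. After a standard density/truncation reduction to $g$ smooth and compactly supported in the spatial variable (legitimate because the $\Delta_2$ assumption ensures absolute continuity of the modular generated by $B$), the fundamental theorem of calculus along the $x_1$-direction gives, for $x=(x_1,x')\in\Omega$, the pointwise bound
\[
|g(t,x)|\leq \int_{I_{x'}}|\nabla g(t,y_1,x')|\,dy_1,\qquad I_{x'}:=\{y_1:(y_1,x')\in\Omega\},\ |I_{x'}|\leq d.
\]

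Next, I would apply $B$ to $c_P^1|g(t,x)|$ and invoke Jensen's inequality with respect to the probability measure $\chi_{I_{x'}}\,dy_1/|I_{x'}|$ (valid because $B$ is convex, or at least equivalent to a convex function by $\Delta_2$), obtaining
\[
B(c_P^1|g(t,x)|)\leq \frac{1}{d}\int_{I_{x'}}B(|\nabla g(t,y_1,x')|)\,dy_1.
\]
Integrating over $x=(x_1,x')\in\Omega$, Fubini together with $|I_{x'}|\leq d$ cancels the prefactor $1/d$ and yields $\int_\Omega B(c_P^1|g|)\,dx\leq \int_\Omega B(|\nabla g|)\,dx$; a final integration in $t\in(0,T)$ closes the argument with $c_P^2$ depending only on $\Omega$ and $N$.

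The main obstacle is justifying the two $\Delta_2$-dependent steps cleanly. First, passage to a convex equivalent of $B$ is required to legitimise Jensen's inequality; the resulting multiplicative constant is absorbed into $c_P^2$, and the iterated relation $B(2s)\leq c B(s)$ is what makes this absorption finite. Second, the density of smooth spatial functions in the modular topology inherent to $\{B(|\nabla g|)\in L^1\}$ is a genuine use of $\Delta_2$: without it the smooth functions need not be dense and the opening reduction fails. A secondary subtlety is that if one wished to drop the boundary hypothesis, the line-integral bound would have to be replaced by a Riesz-potential representation of $g-\overline{g}$ together with a Hedberg-type modular estimate, an argument that once more hinges on $\Delta_2$ to convert the pointwise maximal-function control into a modular inequality.
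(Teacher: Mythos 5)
First, a point of order: the paper does not prove this statement at all --- Theorem~\ref{theo:Poincare} is imported verbatim from~\cite{CGZG} --- so your proposal can only be judged on its own merits, not against an in-text argument.

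Your core argument is sound for \emph{convex} $B$ with $B(0)=0$: slicing in time, using the (implicit, and indeed necessary) zero spatial boundary condition to extend $g(t,\cdot)$ by zero, the fundamental theorem of calculus along $x_1$-lines, Jensen with respect to the normalized measure on $I_{x'}$, and Fubini. This is the classical proof; it yields $c_P^1=1/\mathrm{diam}\,\Omega$ and $c_P^2=1$, independent of $B$, which is exactly the form in which the theorem is used in the a priori estimates of Proposition~\ref{prop:reg-bound}, where $B$ is an $N$-function. You were also right to flag that some vanishing-trace hypothesis must be implicit, since otherwise nonzero constants give a counterexample. Two of your auxiliary claims, however, are off: no density or mollification step is needed (a zero-extended $W_0^{1,1}(\Omega)$ slice is absolutely continuous on almost every line, so the line-integral bound holds directly), and in any case modular density of smooth functions is not a $\Delta_2$ matter (cf.\ the Gossez-type Theorem~\ref{theo:approx-Gossez} recalled in the appendix, which assumes no $\Delta_2$).

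The genuine gap is your treatment of non-convex $B$. The statement assumes only that $B$ is an arbitrary function satisfying $\Delta_2$, and your fallback --- that $\Delta_2$ makes $B$ equivalent to a convex function, with the constant absorbed into $c_P^2$ --- is false: doubling says nothing about convexity. For instance $B(s)=\sqrt{s}$ is doubling, yet any convex $\Phi$ with $\Phi(s)\leq C\sqrt{Cs}$ for all $s$ has difference quotients at $0$ that are nondecreasing and tend to $0$, hence $\Phi\equiv 0$, so no two-sided comparison with a convex function exists. Worse, for such concave doubling $B$ the asserted inequality itself fails, so no repair along your lines is possible: with $B(s)=\sqrt{s}$ and $g(t,x)=h\min\{1,\mathrm{dist}(x,\partial\Omega)/\delta\}$ one has $\int_{\OT}B(c_P^1|g|)\,dx\,dt\gtrsim T\sqrt{c_P^1h}$ while $\int_{\OT}B(|\nabla g|)\,dx\,dt\simeq T\sqrt{h\delta}\to 0$ as $\delta\to 0$. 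The conclusion is that the hypothesis must be read as including convexity (or monotone convex-type behaviour) of $B$, consistently with how the theorem is actually invoked in this paper; under that reading your Jensen argument is complete --- but then $\Delta_2$ plays no role whatsoever in your proof, so the two places where you claim to ``genuinely use $\Delta_2$'' are precisely the places where the argument is either unnecessary or incorrect. If the generality of~\cite{CGZG} really goes beyond convex $B$, it must rely on a different mechanism than Jensen's inequality.
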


\section{Absence of Lavrentiev's phenomenon} \label{sec:absence}
 
Recall that the approximation in modular topology is a natural tool here. In the case of~classical Orlicz spaces, this was noticed by Gossez~\cite{Gossez}. In general Musielak--Orlicz spaces weak derivatives are strong ones with respect to the modular topology only in absence of~Lavrentiev's phenomenon. Thus,  approximation results seem to be of their own interest. Naturally, to exclude problems with approximation an interplay between the~behaviour of the modular function  $M$ for large $|\xi|$ and small changes of the time and the space variables has to be balanced. As already mentioned, in the variable exponent case typical assumption  is log-H\"older continuity of the exponent, whereas in the double-phase spaces the exponents $p<q$ should be close to each other (it is the weight function $a$ who dictates the ellipticity rate of~the energy density).  

We say that this approximation properties is excluding Lavrentiev's phenomenon to link  our study with calculus of variations  and to indicate a critical role of the balance condition therein. It is remarkable that this delicate interplay   acts on a global level. Indeed, in the papers of Zhikov~\cite{zhikov9798,Zhikov2011}, in~the~variable exponent case the conditions on the exponent $p(\cdot)$ ensuring the absence of~Lavrentiev's phenomenon and the density of smooth functions via mollification, are the same as those required for the regularity of~minimizers. The same phenomenon extends to the several other energies including double phase, see for instance~\cite{yags,bcm-st,ELM,zhikov9798,Zhikov2011}.

We shall stress that previously considered conditions in general Musielak-Orlicz spaces, see e.g.~\cite{hhk,hht,mmos:ap,mmos2013,pgisazg2,pgisazg1}, are of log-H\"older type. It is~\cite{yags}  where the way to include the sharp range of parameters in double phase spaces is found. We improve this result to anisotropic and parabolic setting. In further parts of the paper we apply this idea to the existence theory in the space and time dependent spaces.

\medskip

Since the equation has a different structure with respect to time and space we provide two distinct approximation results called for brevity `approximation in space' and `approximation in time'.

\subsection{Approximation in space  }

The approximation in space follows the scheme of~\cite{pgisazg2} modified by ideas of~\cite{yags}. Unlike~\cite{ASGpara} we do not require $M^*$ to satisfy the balance condition.

\begin{theo}[Approximation in space]\label{theo:approx-sp}
Let $\Omega$ be a bounded Lipschitz domain in $\rn$ and a locally integrable $N$-function $M$~satisfy condition ($\mathcal{M}$) (resp.~($\mathcal{M}_p$)). Then for any $\vp$ such that $\vp\in \VTMi$  there exists a sequence $\{\vp_\delta\}_{\delta>0}\subset L^\infty (0,T; C_c^\infty(\Omega))$, such that  $ \vp_\delta\to  \vp$ strongly in $L^1(\Omega_T)$ and $\nabla\vp_\delta\xrightarrow[]{M}\nabla \vp$ in $L_M(\Omega_T;\rn)$ when $\delta\to 0$. Moreover, there exists $c=c(\Omega)>0$, such that $\|\vp_\delta\|_{L^\infty(\Omega)}\leq c \|\vp \|_{L^\infty(\Omega)}$.
\end{theo}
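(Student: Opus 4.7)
The plan is to construct $\vp_\delta$ in two stages: first translate $\vp$ transversally inward so that the support retracts from $\partial\Omega$, and then mollify in the spatial variable at scale $\delta$. Since $\Omega$ is a bounded Lipschitz domain, a finite covering by Lipschitz graphs supplies transverse vector fields and a subordinate partition of unity with which one builds an intermediate $\widetilde\vp_\delta$ supported in $\{x:\mathrm{dist}(x,\partial\Omega)>2\delta\}$, equal to $\vp$ on the interior up to a shift of order $\delta$, with the $L^\infty$ bound preserved and $\widetilde\vp_\delta\to\vp$ in $L^1(\Omega_T)$. Convolving $\widetilde\vp_\delta$ with a standard spatial mollifier $\rho_\delta$ then produces $\vp_\delta\in L^\infty(0,T;C_c^\infty(\Omega))$ with $\|\vp_\delta\|_\infty\le c\|\vp\|_\infty$ and $\vp_\delta\to\vp$ in $L^1(\Omega_T)$ via standard Young estimates.

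The heart of the proof is modular convergence of the gradients. I would cover $\Omega_T$ by a finite disjoint family of cylinders $I_i\times Q_j$ with $|I_i|<\delta$ and $\mathrm{diam}\,Q_j<4\delta\sqrt{N}$, so that $(\mathcal{M})$ applies on each cell with cell-infimum $M_{I_i,Q_j}$ and greatest convex minorant $(M_{I_i,Q_j})^{**}$. Since $(M_{I_i,Q_j})^{**}$ is convex in $\xi$ and independent of $(t,x)$, Jensen's inequality applied to the convolution gives for a.e.\ $(t,x)\in I_i\times Q_j$
\[
(M_{I_i,Q_j})^{**}(\nabla\vp_\delta(t,x))\le \int (M_{I_i,Q_j})^{**}(\nabla\widetilde\vp_\delta(t,x-y))\,\rho_\delta(y)\,dy.
\]
Using $(M_{I_i,Q_j})^{**}(\xi)\le M(t',x',\xi)$ pointwise (Remark~\ref{rem:2ndconj}) on the right and $M(t,x,\xi)\le \Theta(\delta,|\xi|)(M_{I_i,Q_j})^{**}(\xi)$ for $|\xi|>|\xi_0|$ on the left yields
\[
M(t,x,\nabla\vp_\delta(t,x))\le \Theta(\delta,|\nabla\vp_\delta(t,x)|)\,\bigl(M(\cdot,\cdot,\nabla\vp)*\rho_\delta\bigr)(t,x),
\]
up to a harmless contribution from $\{|\nabla\vp_\delta|\le|\xi_0|\}$. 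The $L^\infty$ bound on $\vp$ together with the mollifier scaling forces $|\nabla\vp_\delta|\lesssim \delta^{-1}\le \delta^{-N}$, so $\Theta(\delta,|\nabla\vp_\delta|)\le\Theta(\delta,c\delta^{-N})$ stays uniformly bounded by \eqref{ass:M:vp}.

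Summing over cells and applying Fubini turns this into a uniform modular bound on $\vp_\delta$ together with an equi-integrable majorant for $M(\cdot,\cdot,\nabla\vp_\delta)$. To upgrade to genuine modular convergence of $\nabla\vp_\delta-\nabla\vp$, I would first truncate $\nabla\vp$ at level $k$ and apply the argument to the truncated object, where on $\{|\nabla\vp|\le k\}$ one has pointwise (indeed uniform) convergence $\nabla\vp_\delta\to\nabla\vp$; a Vitali-type passage to the limit, fed by the uniform modular majorant above, then lets one take $\delta\to 0$ in the modular with a suitably small $\lambda>0$, and finally a diagonalization in $k$ removes the truncation. The $(\mathcal{M}_p)$ variant is identical with $\delta^{-N}$ replaced by $\delta^{-N/p}$, the sharper scaling being furnished by the built-in $|\xi|^p$ lower bound on $M$ together with the modular Poincar\'e inequality of Theorem~\ref{theo:Poincare}.

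The principal obstacle is exactly this final step: $L_M(\Omega_T;\rn)$ is neither reflexive nor separable in our setting, so no off-the-shelf dominated-convergence theorem applies, and the balance condition $(\mathcal{M})$ must simultaneously produce the uniform modular bound and the equi-integrable majorant required to run a Vitali-type argument on the shifted-and-mollified error. A secondary technical nuisance is that Jensen's inequality must be applied with the convex envelope $(M_{I_i,Q_j})^{**}$ rather than with $M_{I_i,Q_j}$ itself, which need not be convex after an infimum is taken in $(t,x)$; this is precisely why $(\mathcal{M})$ is formulated through the greatest convex minorant, and one must take care that the comparisons $(M_{I,Q})^{**}\le M_{I,Q}\le M$ are inserted on the correct side of the Jensen estimate.
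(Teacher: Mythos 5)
Your core estimate is the same as the paper's: a cell decomposition $I_i\times Q_j$ at scale $\delta$, the balance condition applied through the greatest convex minorant $(M_{I,Q})^{**}$, Jensen's inequality for the mollification, and the observation that the argument of $\Theta$ stays below the $\delta^{-N}$ threshold of \eqref{ass:M:vp}. Where you genuinely diverge is in the organization of the limit passage and in the source of the scaling bound. The paper first proves a separate uniform operator bound (Lemma~\ref{lem:step2prev}) for the combined dilation-plus-mollification $S_\delta$ acting on an \emph{arbitrary} $\xi\in L_M$, with the argument of $\Theta$ controlled by $\delta^{-N}\|\xi\|_{L^\infty(0,T;L^1(\Omega))}$, and then concludes via density of simple functions (Lemma~\ref{lem:dens}) and a three-term splitting, using the operator bound on the tail and dominated convergence on the bounded simple-function piece; the Lipschitz domain is handled by star-shaped pieces and the dilation $\kappa_\delta$, while you use boundary-chart translations, which is equally fine. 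Your route instead extracts a pointwise equi-integrable majorant $C\,(M(\cdot,\cdot,\nabla\vp/\lambda)*\rho_\delta)$ and concludes by a.e.\ convergence plus a Vitali argument (Definition~\ref{def:convmod}(ii)); read this way, your argument is sound and arguably more direct than the paper's, provided you run it at a scale $\lambda$ making $M(\cdot,\cdot,\nabla\vp/\lambda)$ integrable and keep the infimum in $(M_{I,Q})^{**}$ over a suitably \emph{enlarged} cell so that the shifted points $x-y$ (and the inward translation) remain inside it, which is exactly why the paper works with the doubled cubes $\tQd$.

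Two concrete weak points remain. First, the $(\mathcal{M}_p)$ case does not go through as you state it: your bound $|\nabla\vp_\delta|\lesssim\|\vp\|_{L^\infty}\delta^{-1}$ is weaker than the required $\delta^{-N/p}$ whenever $p>N$, and the modular Poincar\'e inequality (Theorem~\ref{theo:Poincare}) is not the relevant tool; what is needed is the H\"older inequality $|(\nabla\vp(t,\cdot))*\rho_\delta|\le\|\rho_\delta\|_{L^{p'}}\|\nabla\vp(t,\cdot)\|_{L^p}\lesssim\delta^{-N/p}$, with the $L^p$ bound on the gradient supplied by \eqref{M>p} -- this is precisely how the paper handles $(\mathcal{M}_p)$. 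Second, your final truncation/diagonalization layer is both imprecise and, as justified, gapped: the convergence $\nabla\vp_\delta\to\nabla\vp$ on $\{|\nabla\vp|\le k\}$ is only a.e.\ (not uniform), and if you do route through truncation you must control $\int_{\OT} M\bigl(t,x,S_\delta(\nabla\vp-T_k(\nabla\vp))/\lambda\bigr)$ uniformly in $\delta$; the tail is not the gradient of an $L^\infty$ function, so your $\|\vp\|_{L^\infty}\delta^{-1}$ argument gives no bound on the argument of $\Theta$ there -- one needs an operator bound of the paper's Lemma~\ref{lem:step2prev} type, with the $\delta^{-N}$ (resp.\ $\delta^{-N/p}$) scaling coming from an $L^1$- (resp.\ $L^p$-) type norm of the mollified object. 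The cleanest fix is to drop the truncation altogether and rely solely on your majorant-plus-Vitali conclusion, which needs only the estimate for the full rescaled $\vp$.
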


To deal with the approximation in space we start with the construction of an approximate sequence based on the convolution, then we provide the uniform estimate on the star-shape domain and we  conclude the proof of Theorem~\ref{theo:approx-sp}.

\medskip

Let $\kappa_\delta=1- {\delta}/{R}$. For a measurable function $\xi:[0,T]\times\rn\to\rn$ with $\mathrm{supp}\,\xi\subset[0,T]\times\Omega$, we define
\begin{equation}
\label{Sdxi}S_\delta(\xi(t,x)) =
 \int_\Omega \rho_\delta( x-y)\xi (t, y/\kappa_\delta)dy,
\end{equation}
where $ \rho _\delta(x)=\rho (x/\delta)/\delta^N$ is a standard regularizing kernel on $\rn$  (i.e. $\rho \in C^\infty(\rn)$,
$\mathrm{supp}\,\rho \subset\subset B(0, 1)$ and $\iO \rho (x)dx = 1$, $\rho (x) = \rho (-x)$), such that $0\leq \rho\leq 1$. Let us notice that $S_\delta(\xi)\in C_c^\infty(\rn;\rn)$ and that $S_\delta$ preserves the $L^\infty$ norm.

\begin{lem}\label{lem:step2prev} Suppose $M$ is a locally integrable $N$-function satisfying condition ($\mathcal{M}$), resp.~($\mathcal{M}_p$), and $\Omega$ is a star-shaped domain with respect to a ball $B(0,R)$ for some $R>0$. Let $S_\delta$ be given by~\eqref{Sdxi}. Then there exist  constants $C,\delta_1>0$ independent of $\delta$ such that for all $\delta<\delta_1$
\begin{equation}
\label{in:Md<M}
\int_\OT M(t,x,S_\delta \xi(t,x))\, dx\,dt\leq C
\int_\OT M(t,x, \xi(t,x))\, dx\,dt\qquad\forall_{\xi\in L_M(\OT;\rn)}.
\end{equation} 
\end{lem}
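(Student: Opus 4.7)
The plan is to exploit the balance condition $(\mathcal{M})$ (resp. $(\mathcal{M}_p)$) locally: on each small time-space cell the modular $M(t,x,\cdot)$ is comparable, up to the multiplicative factor $\Theta(\delta,\cdot)$, to its pointwise infimum over that cell, whose greatest convex minorant $\Mss$ (Remark~\ref{rem:2ndconj}) is convex in the last variable. This convexity lets me slide the convolution inside the modular via Jensen's inequality, after which I transfer back to $M$ at the integration point.

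First I would partition $[0,T]$ into intervals $\Iid$ of length $\delta$ and tile $\rn$ by cubes $\Qd$ of diameter at most $2\delta\sqrt{N}$, so that for any $x\in\Qd$ the ball $B(x,\delta)$ lies inside a slightly enlarged cube $\tQd$ with $\mathrm{diam}\,\tQd\leq 4\delta\sqrt{N}$; since $\kappa_\delta=1-\delta/R\to 1$, for $\delta<\delta_1$ small the dilation by $1/\kappa_\delta$ maps $\tQd$ inside a comparable enlargement of itself, so that $y/\kappa_\delta$ still lies in $\tQd\cap\Omega$ whenever $y\in B(x,\delta)\cap\kappa_\delta\Omega$. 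On each rectangle I set $\Mijd(\zeta)=\essinf_{(t,y)\in\Iid\times(\tQd\cap\Omega)}M(t,y,\zeta)$ and denote by $\Mss$ its greatest convex minorant, which satisfies $\Mss(\zeta)\leq\Mijd(\zeta)\leq M(t,y,\zeta)$ for $(t,y)$ in that rectangle.

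Since $\rho_\delta(x-\cdot)$ is a probability density supported in $B(x,\delta)$, Jensen's inequality applied to the convex function $\Mss$ yields, for $(t,x)\in\Iid\times\Qd$,
\[\Mss(S_\delta\xi(t,x))\leq\int_\Omega\rho_\delta(x-y)\,\Mss(\xi(t,y/\kappa_\delta))\,dy\leq\int_\Omega\rho_\delta(x-y)\,M(t,y/\kappa_\delta,\xi(t,y/\kappa_\delta))\,dy,\]
while the balance condition $(\mathcal{M})$ applied at $\zeta=S_\delta\xi(t,x)$ gives, for $|S_\delta\xi(t,x)|>|\xi_0|$,
\[M(t,x,S_\delta\xi(t,x))\leq\Theta\bigl(\delta,|S_\delta\xi(t,x)|\bigr)\,\Mss(S_\delta\xi(t,x)).\]
On the sub-level set $\{|S_\delta\xi|\leq|\xi_0|\}$ I would bound $M(t,x,S_\delta\xi)$ by $\sup_{|\zeta|\leq|\xi_0|}M(t,x,\zeta)$, which is integrable on $\OT$ by local integrability of $M$ and can be absorbed into the right-hand side after enlarging $C$.

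The main obstacle is uniform control of the factor $\Theta(\delta,|S_\delta\xi(t,x)|)$, since $S_\delta\xi$ need not belong to $L^\infty$. I would split $\OT$ into $\{|S_\delta\xi|\leq\delta^{-N}\}$ and $\{|S_\delta\xi|>\delta^{-N}\}$. On the first set, monotonicity of $\Theta$ and \eqref{ass:M:vp} yield $\Theta(\delta,|S_\delta\xi|)\leq\Theta(\delta,\delta^{-N})\leq C$ uniformly for $\delta<\delta_1$; combining this with the Jensen bound and applying Fubini yields
\[\iQd\int_\Iid M(t,x,S_\delta\xi)\,dt\,dx\leq C\,\itQd\int_\Iid M(t,y/\kappa_\delta,\xi(t,y/\kappa_\delta))\,dt\,dy,\]
and summation over $(i,j)$ with bounded overlap of the enlarged boxes $\tQd$, followed by the change of variables $y\mapsto y/\kappa_\delta$ with Jacobian $\kappa_\delta^{-N}\to 1$, delivers the claimed estimate. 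On the exceptional set $\{|S_\delta\xi|>\delta^{-N}\}$, the pointwise bound $|S_\delta\xi(t,x)|\leq\|\rho\|_\infty\delta^{-N}\int_{B(x,\delta)}|\xi(t,y/\kappa_\delta)|\,dy$ together with superlinearity of the $N$-function $M$ (Definition~\ref{def:Nf}(4)) forces this set to be small in modular measure, and its contribution can be absorbed into $\iOT M(t,x,\xi)\,dx\,dt$ by a second application of Jensen to the convex function $M(t,\cdot,\cdot)$. For the variant $(\mathcal{M}_p)$, the same decomposition applies with $\delta^{-N/p}$ in place of $\delta^{-N}$, and the lower bound $M(t,x,\xi)\geq c_{gr}|\xi|^p$ is used to control the exceptional set via the classical $L^p$-boundedness of mollification.
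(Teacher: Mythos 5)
Your good-set mechanism (Jensen's inequality for the convex minorant $\Mss$, transfer back to $M$ via the minorant property, and the ratio bound from $(\mathcal{M})$ controlled through $\Theta(\delta,\delta^{-N})$, followed by the bounded-overlap summation and the change of variables $y\mapsto y/\kappa_\delta$) is exactly the paper's argument. The genuine divergence, and the gap, is in how the factor $\Theta(\delta,|S_\delta\xi(t,x)|)$ is made uniform. The paper never sees an exceptional set: it first reduces, without loss of generality (see~\eqref{xileq1}), to $\|\xi\|_{L^\infty(0,T;L^1(\Omega))}\leq 2^{-N}$, and then the elementary bound $\rho_\delta\leq\delta^{-N}$ gives $|S_\delta\xi(t,x)|\leq\delta^{-N}$ at \emph{every} point of $\OT$ (and $|S_\delta\xi|\leq\delta^{-N/p}$ via H\"older under $(\mathcal{M}_p)$), cf.~\eqref{vpxidest}, so the diagonal hypothesis~\eqref{ass:M:vp} applies globally. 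You instead keep a general $\xi\in L_M(\OT;\rn)$, split off the set $\{(t,x):\,|S_\delta\xi(t,x)|>\delta^{-N}\}$, and claim its contribution is controlled ``by superlinearity'' and ``a second application of Jensen to $M(t,\cdot,\cdot)$''. That step fails: Jensen applied to $M(t,x,\cdot)$ keeps the space variable frozen at $x$ while $\xi$ is sampled at $y/\kappa_\delta$, and passing from $M(t,x,\cdot)$ to $M(t,y/\kappa_\delta,\cdot)$ is precisely what the balance condition is for --- but on this set its factor $\Theta(\delta,|S_\delta\xi|)$ is uncontrolled, since \eqref{ass:M:vp} only bounds $\Theta$ along the diagonal $(\delta,\delta^{-N})$ and $\Theta(\delta,s)$ may blow up in $s$ for fixed $\delta$. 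Smallness of the exceptional set in measure does not help, because $M(t,x,S_\delta\xi)$ has no a priori integrable majorant there. Likewise, in the $(\mathcal{M}_p)$ case the $L^p$-boundedness of mollification controls $\int|S_\delta\xi|^p$, but \eqref{M>p} is only a lower bound on $M$, so this does not control $\int M(t,x,S_\delta\xi)$. Without the pointwise bound $|S_\delta\xi|\leq\delta^{-N}$ (which the paper buys with the $L^\infty(0,T;L^1)$ normalization) your argument does not close.

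A secondary point: on $\{|S_\delta\xi|\leq|\xi_0|\}$ you bound $M(t,x,S_\delta\xi)$ by an integrable majorant and propose to ``absorb it after enlarging $C$''. That produces an additive constant, which cannot in general be absorbed into the purely multiplicative bound $C\iOT M(t,x,\xi)\,dx\,dt$ (the right-hand side may be arbitrarily small), and the multiplicative form is what is actually used afterwards, namely the estimate $0\leq L_1^{l,n,\delta}\leq C L_3^{l,n,\delta}$ with $L_3^{l,n,\delta}\to 0$ in the proof of Theorem~\ref{theo:approx-sp}. So even the non-exceptional part of your argument would need to be reworked to deliver the statement in the form in which it is applied.
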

\begin{proof} Fix  $\xi\in L_M(\OT;\rn)$ and note that $f\in L^1(\OT)$ and without loss of generality it can be assumed that \begin{equation}
\label{xileq1}
\|\xi\|_{L^\infty(0,T; L^1(\Omega))}\leq \frac{1}{2^N}.
\end{equation}

 For $0 < \delta < R$ it holds that
\[\overline{\kappa_{\delta} \Omega + \delta B(0, 1)} \subset \Omega.\]
Therefore, $S_\delta\xi\in L^\infty (0,T; C_c^\infty(\Omega))$. 

We fix $0<\delta< R/{4} $. Let $\{\Qd\}_{j=1}^{N_\delta}$ and  $\{\Iid\}_{i=1}^{N^T_\delta}$ be families of sets having the following properties. We denote by $\{I_i^\delta\}_{i=1}^{N_\delta^T}$ a finite family  of closed subintervals $I_i^\delta\subset[0,T]$  of a length not greater than $\delta$, such that $I_i^\delta=[t_i^\delta,t_{i+1}^\delta)$ and $[0,T]=\bigcup_{i=1}^{N_\delta^T}I_i^\delta$.  Moreover, we consider a family of $N$-dimensional cubes   covering the set $\Omega$. Namely, a~family $\{\Qd\}_{j=1}^{N_\delta}$ consists of~closed cubes of edge $2\delta$, such that  $\mathrm{int}\Qd\cap\mathrm{int} Q^\delta_i=\emptyset$ for $i \neq j$ and $\Omega\subset\bigcup_{j=1}^{N_\delta}\Qd$. Moreover, for each
cube $\Qd$ we define the cube $\tQd$ centered at the same point and with parallel corresponding edges of length $4\delta$.

According to condition ($\mathcal{M}$), resp.~($\mathcal{M}_p$), the relation between $M(t,x,\xi )$ and \begin{equation}
\label{Mijd}\Mijd(\xi ):= {\rm ess\,inf}_{t\in  {I}_i^\delta,\, x\in \widetilde{Q}_j^\delta\cap\Omega}M(t,x,\xi )
\end{equation} is as follows
\begin{equation}
\label{ass:M:reg}
\frac{M(t,x,\xi )}{\Mss(\xi)}
\leq \Theta  \left(\delta,  |\xi| \right)\qquad\text{for a.e. }(t,x)\in \Iid\times\Qd\quad\text{and all }\xi\in\rn:\ |\xi|>|\xi_{sp}|,
\end{equation}
where by $\Mss(\xi)=({(\Mijd(\xi))}^*)^* $ we denote the greatest convex minorant (coinciding with the~second conjugate).

  Since $M(t,x,\xi_\delta(x))=0$ whenever $\xi_\delta(x)=0$, we have
\begin{equation}\label{M:div-mult}
\begin{split}
\iOT M(t,x,S_\delta(\xi(t,x)))dxdt & =
\sum_{i=1}^{N^T_\delta}\sum_{j=1}^{N_\delta}\int_{\Iid} \iQd M(t,x,S_\delta(\xi(t,x)))dxdt\\
&=
\sum_{i=1}^{N^T_\delta}\sum_{j=1}^{N_\delta}\iIid \iQdn \frac{M(t,x,S_\delta(\xi(t,x)))}{\Mss(S_\delta(\xi(t,x)))}{\Mss(S_\delta(\xi(t,x)))}dxdt.\end{split}
\end{equation}

Our aim is to show now the following uniform bound
\begin{equation}
\label{M/M<c}\frac{M(t,x,S_\delta(\xi(t,x)))}{\Mss(S_\delta(\xi(t,x)))}\leq C
\end{equation}
for  sufficiently small $\delta>0$, $x\in\Qd\cap\Omega,$ $t\in\Iid$, with $C$ independent of $\delta,t,x,i,j$ and $\xi$.

 Let us fix an arbitrary cube and subinterval and take $(t,x)\in \Iid\times \Qd$. For sufficiently small $\delta$, due to~\eqref{ass:M:reg}, we obtain \begin{equation}
\label{M/M<vpxi}\frac{M(t,x,S_\delta(\xi(t,x)))}{\Mss(S_\delta(\xi(t,x)))}
\leq   \Theta(\delta, |S_\delta(\xi(t,x))|).
\end{equation}

To estimate the right--hand side of~\eqref{M/M<vpxi} we recall definition of $S_\delta$ given in~\eqref{Sdxi}. For any $(t,x)\in\Omega_T$   and each $\delta>0$  we have $\rho _\delta(x-y)\leq   1/\delta^{N}.$ Since~\eqref{xileq1}, we observe that
\begin{equation}
\label{vpxidest}\begin{split}|S_\delta \xi(t,x)|&
\leq \frac{ 1}{ \delta^N }  \int_{\Omega }  |\xi (t, y/\kappa_\delta)|\,dy  \leq \frac{\kappa_\delta^N}{\delta^N}\|\xi\|_{L^\infty(0,T; L^1(\Omega))}\leq \delta^{-N}.\end{split}
\end{equation}
In the case of~($\mathcal{M}_p$), we estimate $|S_\delta \xi(t,x)| \leq  {\delta^{-N/p} }$ using the H\"older inequality.

We combine this with~\eqref{M/M<vpxi},~\eqref{vpxidest}, and by recalling~\eqref{ass:M:vp} (resp.~\eqref{ass:M:reg-p}  if~\eqref{M>p}) to get \begin{equation*}
\frac{M(t,x,S_\delta(\xi(t,x)))}{ \Mss(S_\delta(\xi(t,x))) } \leq \Theta(\delta,\delta^{-N })<C,\qquad \left(\text{resp. }\
\frac{M(t,x,S_\delta(\xi(t,x)))}{ \Mss(S_\delta(\xi(t,x))) } \leq \Theta(\delta,\delta^{-N/p })<C\right)
\end{equation*}
for all $\delta<\delta_1$ with some $\delta_1>0$.  Thus, we obtain~\eqref{M/M<c}.

Now, starting from~\eqref{M:div-mult}, noting~\eqref{M/M<c}  and the fact that $\Mss(\xi)=0$ if and only if $\xi=0$, we observe \[
\begin{split}
 \iOT M(t,x,S_\delta\xi(t,x))dx dt&=
\sum_{i=1}^{N^T_\delta}\sum_{j=1}^{N_\delta}\iIid \iQdn \frac{M(t,x,S_\delta(\xi(t,x)))}{\Mss(S_\delta(\xi(t,x)))}{\Mss(S_\delta(\xi(t,x)))}dxdt \\
&\leq C
\sum_{i=1}^{N^T_\delta}\sum_{j=1}^{N_\delta}\iIid \iQdn  {\Mss(S_\delta(\xi(t,x)))}dxdt\\
&\leq C
\sum_{i=1}^{N^T_\delta}\sum_{j=1}^{N_\delta}\iIid \iQd \   {{\Mss}\left( \int_{B(0,\delta)} \rho _\delta(y)\xi (t,\kappa_\delta (x-y))dy\right)}\mathds{1}_{\Qd\cap\Omega}(x) dxdt\\
&\leq C
\sum_{i=1}^{N^T_\delta}\sum_{j=1}^{N_\delta}\iIid  \int_\rn  \  {{\Mss}\left( \int_{B(0,\delta)} \rho _\delta(y)\xi (t,\kappa_\delta (x-y))\mathds{1}_{\Qd\cap\Omega}(x)dy\right)} dxdt\\
&\leq C
\sum_{i=1}^{N^T_\delta}\sum_{j=1}^{N_\delta}\iIid  \int_\rn  {{\Mss}\left( \int_{\rn} \rho _\delta(y)\xi (t,\kappa_\delta (x-y))\mathds{1}_{\tQd\cap\Omega}(x-y)dy\right)} dxdt.\end{split}
\]
Note  that by applying the Jensen inequality  the right-hand side above can be estimated by the following quantity
\[ \begin{split}
&   \ C
\sum_{i=1}^{N^T_\delta}\sum_{j=1}^{N_\delta}\iIid \int_\rn \int_{\rn} \rho _\delta(y) {{\Mss}\left( \xi (t,\kappa_\delta (x-y))\mathds{1}_{\tQd\cap\Omega}(x-y) \right)} dy\,dxdt\\
& \qquad\qquad\qquad\leq C \| \rho _\delta\|_{L^1({B(0,\delta);\rn})}
\sum_{i=1}^{N^T_\delta}\sum_{j=1}^{N_\delta}\iIid \int_\rn {{\Mss}\left( \xi (t,\kappa_\delta z)\mathds{1}_{\tQd\cap\Omega}(z) \right)}  dzdt\\
& \qquad\qquad\qquad\leq  C
\sum_{i=1}^{N^T_\delta}\sum_{j=1}^{N_\delta}\iIid  \int_{\tQd\cap\Omega} {{\Mss}\left( \xi (t,\kappa_\delta z) \right)} dzdt,\end{split}
\]
where we applied the Young inequality for convolution, boundedness of $\rho _\delta$, once again  the fact that $\Mss(\xi)=0$ if~and only if~$\xi=0$. Then, by the definition of  $\Mss$ and the fact it is the greatest convex minorant of $\Mijd$  (see~Remark~\ref{rem:2ndconj}), we realize that
\[ \begin{split} C
\sum_{i=1}^{N^T_\delta}\sum_{j=1}^{N_\delta}\iIid  \int_{\tQd\cap\Omega} {{\Mss}\left( \xi (t,\kappa_\delta z) \right)} dzdt&\leq C\sum_{j=1}^{N_\delta} \int_0^T \int_{{\kappa_\delta}\Qd} {M\left(t,x, \xi (t,x) \right)}  dxdt\\
&\leq C\sum_{j=1}^{N_\delta} \int_0^T \int_{{2}\Qd} {M\left(t,x, \xi (t,x) \right)}  dxdt\\
&\leq  C(N)\iOT {M\left(t,x, \xi (t,x) \right)} dxdt.\end{split}
\]
The last inequality above stands for computation of~a~sum taking into account the measure of~repeating parts of~cubes. We get~\eqref{in:Md<M} by summing up the above estimates.\end{proof}

Now we are in the position to prove approximation in space result.

\begin{proof}[Proof of Theorem~\ref{theo:approx-sp}] If $\Omega$ is a bounded Lipschitz domain in~$\rn$, then there exists a finite family of open sets
$\{\Omega_i\}_{i\in I}$ and a finite family of balls $\{ B^i\}_{i\in I}$ such that
$$\Omega=\bigcup\limits_{i\in I}\Omega_i$$
and every set $\Omega_i$ is star-shaped with respect to ball $B^i$ of radius $R_i$ (see e.g.~\cite{Novotny}).
Let us  introduce the partition of unity $\theta_i$ with
$0\le\theta_i\le 1,$ $\theta_i\in C^\infty_0(\Omega_i), $ ${\rm supp} \,\theta_i=\Omega_i,$ $\sum_{i\in I}\theta_i( x)=1$
 for $x\in\Omega$. We define $Q_i:=(0,T)\times \Omega_i$.

\medskip

 Fix arbitrary $\vp \in \VTMi$. We are going to show that there exists a constant $\lambda>0$ such that
\[ \lim_{\delta\to 0^+}\iOT M\left(t,x, \frac{\nabla S_\delta(  \vp )- \nabla \vp}{\lambda}\right) dx dt = 0,\] where $S_\delta$ is defined in~\eqref{Sdxi}.

Since\[ \iOT M\left(t,x, \frac{\nabla S_\delta(   \vp )- \nabla \vp}{\lambda}\right) dx dt \leq\sum_{i\in I} \int_{Q_i} M\left(t,x, \frac{\nabla S_\delta(\vp)- \nabla \vp}{\lambda}\right) dx dt \]
it suffices to prove it to prove convergence to zero of each integral from the right-hand side.

Let us consider a family of measurable sets  $\{ E_n \}_n$  such that $\bigcup_n E_n = \OT$ and a simple vector valued function
$E^n(t,x)=\sum_{j=0}^n \mathds{1}_{E_j}(t,x) \va_{j}(t,x),$
converging modularly to $\nabla \vp $ with $\lambda_3$ (cf.~Definition~\ref{def:convmod}) whose existence is ensured by Lemma~\ref{lem:dens}.  Note that
\[  \nabla S_\delta(   \vp)- \nabla \vp=\left(\sum_{i\in I}\nabla S_\delta(\theta_i \vp)-S_\delta E^n\right) +(S_\delta E^n -E^n)
	+ (E^n - \nabla\vp) .\]

Convexity of $M(t,x, \cdot)$ implies
	\begin{equation*}
	\begin{split}
	&\int_{Q_i} M \left(t, x, \frac{ \nabla S_\delta(  \vp)- \nabla \vp }{ \lambda }\right) \,dx dt=\\
	& \leq	\frac{\lambda_1}{\lambda} \int_{Q_i}  M\left(t, x,
	\frac{\nabla S_\delta(  \vp) -S_\delta E^n}{ \lambda_1} \right) \,dx dt	+ \frac{\lambda_2}{\lambda} \int_{Q_i}  M\left( t,x,
	\frac{S_\delta E^n -E^n }{\lambda_2 } \right) \,dx dt\\
	& \quad  + \frac{\lambda_3}{\lambda} \int_{Q_i}  M\left( t,x,  \frac{ E^n - \nabla( \vp)}{\lambda_3} \right) \,dx dt\\
	&=L^{l,n,\delta}_1+L^{l,n,\delta}_2+L^{l,n,\delta}_3,
	\end{split}
	\end{equation*}
where $\lambda= \sum_{i=1}^3\lambda_i$, $\lambda_i>0$.  We have $\lambda_3$ fixed already. Let us take $\lambda_1=\lambda_3$.

\medskip

We note that $ \vp \in \VTMi$  and for each $i\in I$ we have
$\theta_i\cdot \vp\in L^\infty (Q_i)\cap L^\infty(0,T;L^2(\Omega_i))\cap L^1(0,T;W^{1,1}_0(\Omega_i))$ and 
$\nabla (\theta_i \vp)\ =\ \vp \nabla \theta_i + \theta_i \nabla \vp \ \in\ L_M(\OT;\rn).$ Furthermore, $\sum_{i\in I} \nabla (\theta_i \vp)=\nabla \vp$.

	 Let us notice that
\[L_1^{l,n,\delta}=\frac{\lambda_1}{\lambda} \int_{Q_i}  M\left( t,x,   S_\delta\left( \frac{ E^n  - \sum_{i\in I}\nabla  (\theta_i  \vp)  }{ \lambda_1}\right) \right) \,dx dt.\]
Due to Lemma~\ref{lem:step2prev} the family of operators $S_\delta$ is uniformly bounded from $L_M(\Omega;\rn)$ to~$L_M(\Omega;\rn)$ and we can estimate $0\leq L^{l,n,\delta}_1\leq C L^{l,n,\delta}_3.$ Furthermore, Lemma~\ref{lem:dens} implies that $\lim_{n\to\infty} \lim_{\delta\to 0^+}L^{l,n,\delta}_3= 0,$  which entails $\lim_{l\to\infty} \lim_{\delta\to 0^+} L^{l,n,\delta}_1= 0$ as well.

 Let us concentrate on $L^{l,n,\delta}_2$.
 The Jensen  inequality and then the Fubini theorem lead to
	\begin{equation}\label{IE:aw17}
	\begin{split}
	 \frac{\lambda }{\lambda_2} L^{l,n,\delta}_2  &= \int_{Q_i}  M\left(t, x,  \frac{ E^n (t,x)  -S_\delta E^n (t,x) }{\lambda_2} \right) \,dx dt\\
	& = \int_{Q_i} M \left(t, x, \frac{1}{ \lambda_2} \int_{B(0,\delta)} \varrho_\delta(y) \sum_{j=0}^n [  \mathds{1}_{E_j}(t,x) \va_j (t,x)- \mathds{1}_{E_j}(t,\kappa_\delta(x -  y)) \va_j (t,\kappa_\delta(x -  y)) ]\,dy \right) \,dx\,dt
	\\ &
	\leq
	\int_{B(0,\delta)} \varrho_\delta(y)  \left( \int_{Q_i} M \left(t, x, \frac{1}{\lambda_2} \sum_{j=0}^n [   \mathds{1}_{E_n}(t,x) \va_j (t,x) -\mathds{1}_{E_j}(t,\kappa_\delta(x -  y)) \va_j(t,\kappa_\delta(x -  y)) ] \right) \,dx \right) \,dy\,dt.
	\end{split}	\end{equation}
Using the continuity of the shift operator in $L^1$ we observe that poinwisely
	\[  \sum_{j=0}^n [  \mathds{1}_{E_j}(t,x) \va_j (t,x)- \mathds{1}_{E_j}(t,\kappa_\delta(x -  y)) \va_j(t,\kappa_\delta(x -  y))   ] \xrightarrow[]{\dep\to 0} 0.\]
	Moreover, when we fix arbitrary $\lambda_2>0$ we have
	\[\begin{split}
	 &M \left(t, x, \frac{1}{\lambda_2} \sum_{j=0}^n [   \mathds{1}_{E_j}(t,x) \va_j(t,x) - \mathds{1}_{E_j}(t,\kappa_\delta(x -  y)) \va_j (t,\kappa_\delta(x -  y))  ] \right) \leq \sup_{\eta\in\rn:\ | \eta|=1}M \left(t, x, \frac{1}{ \lambda_2} \sum_{j=0}^n| \va_j|  \eta \right)  <\infty
	 \end{split}\]
and
the Lebesgue Dominated Convergence Theorem provides the right-hand side of \eqref{IE:aw17} converges to zero.

Passing to the limit completes the proof of modular convergence of the approximate sequence. The modular convergence of gradients implies their strong $L^1$-convergence and the Poincar\'e inequality gives the claim. \end{proof}

\subsection{Approximation in time  }

This part was not needed in~\cite{pgisazg2}, due to the lack of time-dependence of $M$. Indeed, therein the following approximation result follows directly from the Jensen inequality. Here we need to examine carefully the uniform estimate and convergence.

\begin{prop}[Approximation in time]\label{prop:time-app}
Let $\Omega$ be a bounded Lipschitz domain in $\rn$, a locally integrable $N$-function $M$~satisfy condition ($\mathcal{M}$) (resp.~($\mathcal{M}_p$)), and $\vp\in \VTMi$. Consider linear mappings $\vp\mapsto\vp_\et$ and $ {\vp} \mapsto\wt{\vp}_\et$, given by
\begin{equation}\label{t-vp-d} \vp_{\et}(t,x) := \frac{1}{{\et}}\int_{t}^{t+\et}\vp(\sigma,x)\,d\sigma \qquad \text{and} \qquad
 \wt{\vp}_{\et}(t,x) := \frac{1}{{\et}}\int_{t-\et}^{t}\vp(\sigma,x)\,d\sigma. 
\end{equation}
When $\et\to 0$, then both $\vp_\et\to\vp$ and $\wt{\vp}_\et\to\vp$ converge strongly in $W^{1,1}_{loc}(\Omega)$. Moreover,
 \[ \nabla (\vp_\et)\xrightarrow[\et\to 0]{}\nabla \vp \quad\text{and}\quad \nabla (\wt{\vp}_\et)\xrightarrow[\et\to 0]{}\nabla{\vp}\quad\text{modularly in }L_M(\Omega_T;\rn).\] Furthermore, $\|\vp_\et\|_{L^\infty(\OT)}\leq \|\vp \|_{L^\infty(\OT)}$ and $\|\wt{\vp}_\et\|_{L^\infty(\OT)}\leq \|\vp \|_{L^\infty(\OT)}$.
\end{prop}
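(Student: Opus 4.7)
The plan is to mirror the three-term decomposition from the proof of Theorem~\ref{theo:approx-sp}, substituting the spatial operator $S_\delta$ with the time-averaging operator $T_\et\psi(t,x):=\frac{1}{\et}\int_t^{t+\et}\psi(\sigma,x)\,d\sigma$; the argument for $\wt{\vp}_\et$ is symmetric, with $[t,t+\et]$ replaced by $[t-\et,t]$. The preservation of the $L^\infty(\OT)$ norm is immediate from Jensen's inequality applied to $|\cdot|$. Since $\nabla$ commutes with the time-average, one has $\nabla\vp_\et = T_\et(\nabla\vp)$, and classical one-dimensional mollification theory, combined with $L_M(\OT;\rn)\hookrightarrow L^1(\OT;\rn)$ on the bounded cylinder, gives both $\vp_\et\to\vp$ and $\nabla\vp_\et\to\nabla\vp$ strongly in $L^1(\OT)$, which yields the claimed $W^{1,1}_{loc}$ convergence.

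For the modular convergence of the gradients I invoke Lemma~\ref{lem:dens} to pick, for each $n$, a simple vector field $E^n=\sum_{j=1}^{n}\va_{j}\mathds{1}_{E_j^n}$ with $|E^n|\leq K_n$ a.e., such that $E^n\to\nabla\vp$ modularly in $L_M(\OT;\rn)$ with some fixed $\lambda_3>0$. Using linearity of $T_\et$, write
\[T_\et(\nabla\vp) - \nabla\vp \ =\ T_\et(\nabla\vp - E^n) + (T_\et E^n - E^n) + (E^n - \nabla\vp),\]
and split the total modular at scale $\lambda=\lambda_1+\lambda_2+\lambda_3$ by convexity into three pieces $L_1^{n,\et}, L_2^{n,\et}, L_3^{n,\et}$. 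The piece $L_3^{n,\et}$ is independent of $\et$ and tends to zero as $n\to\infty$ by the choice of $E^n$. For fixed $n$, the middle piece $L_2^{n,\et}\to 0$ as $\et\to 0$ by dominated convergence: $T_\et E^n\to E^n$ a.e.\ via the one-dimensional Lebesgue differentiation theorem applied to each $\mathds{1}_{E_j^n}$, and $|T_\et E^n - E^n|\leq 2K_n$ is dominated while $\sup_{|\eta|\leq 2K_n/\lambda_2}M(t,x,\eta)$ is integrable on $\OT$ by local integrability of $M$.

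The piece $L_1^{n,\et}$ is the crux. Setting $\xi:=\nabla\vp - E^n$ and applying Jensen's inequality in time together with the balance condition $(\mathcal{M})$ on an interval $J$ of length at most $2\et$ containing both $t$ and the window $[t,t+\et]$ (with $Q$ an arbitrary cube of diameter at most $4\et\sqrt{N}$ containing $x$), one obtains the pointwise estimate
\[M\bigl(t,x,T_\et\xi(t,x)/\lambda_1\bigr)\ \leq\ \Theta\bigl(2\et,\,|T_\et\xi(t,x)|/\lambda_1\bigr)\cdot \tfrac{1}{\et}\int_t^{t+\et} M\bigl(\sigma,x,\xi(\sigma,x)/\lambda_1\bigr)\,d\sigma,\]
where Jensen is applied to the convex minorant $(M_{J,Q})^{**}$ and the bound $(M_{J,Q})^{**}(\eta)\leq M(\sigma,x,\eta)$ is used for $\sigma\in J$, $x\in Q\cap\Omega$. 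A Fubini swap in $(t,\sigma)$, as in the time-independent case treated in~\cite{pgisazg2}, would then give $L_1^{n,\et}\leq C\cdot L_3^{n,\et}$, provided the factor $\Theta$ can be controlled uniformly.

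The main obstacle is exactly this uniform control of $\Theta$: unlike Lemma~\ref{lem:step2prev}, where the a priori bound $|S_\delta\xi|\leq\delta^{-N}$ pins $\Theta$ via~\eqref{ass:M:vp}, here $|T_\et\xi|$ enjoys no comparable bound. I would resolve this by truncation: decompose $\OT=\{|\xi|/\lambda_1\leq\et^{-N}\}\cup\{|\xi|/\lambda_1>\et^{-N}\}$ (with $\et^{-N/p}$ replacing $\et^{-N}$ under $(\mathcal{M}_p)$ through the H\"older-type estimate used in Lemma~\ref{lem:step2prev} and~\eqref{ass:M:reg-p}). On the good set the balance factor is bounded by $\Theta(2\et,\et^{-N})\leq C$ via~\eqref{ass:M:vp}; on the complement, the absolute continuity of the Lebesgue integral together with $\int_\OT M(t,x,\xi/\lambda_1)\,dx\,dt<\infty$ forces the contribution to vanish as $\et\to 0$, since $\{|\xi|/\lambda_1>\et^{-N}\}$ shrinks in measure. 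Summing the three estimates and letting first $n\to\infty$ then $\et\to 0$ yields $\nabla\vp_\et\xrightarrow[]{M}\nabla\vp$, completing the modular convergence; together with the strong $L^1$-convergence already established, this gives the claim.
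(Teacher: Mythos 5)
Your skeleton matches the paper's: the three-term splitting with a simple function from Lemma~\ref{lem:dens}, the treatment of the middle term by Jensen, pointwise convergence of shifts and dominated convergence (using local integrability of $M$ and boundedness of the simple function), the $L^\infty$ bound by Jensen, and the strong $W^{1,1}_{loc}$ convergence are all as in the paper, and you correctly identify the crux: the time average carries no analogue of the kernel bound $|S_\delta\xi|\leq\delta^{-N}$ that pins down $\Theta$ in Lemma~\ref{lem:step2prev}. But your resolution of that crux does not work. Writing $\xi_\et(t,x)=\frac1\et\int_t^{t+\et}\xi(\sigma,x)\,d\sigma$, the factor you must control is $\Theta\bigl(2\et,|\xi_\et(t,x)|/\lambda_1\bigr)$, and $|\xi_\et(t,x)|$ depends on the values of $\xi$ on the whole window $[t,t+\et]\times\{x\}$, not on $|\xi(t,x)|$; a point of your ``good set'' $\{|\xi|/\lambda_1\leq\et^{-N}\}$ can have an arbitrarily large average if its window meets the region where $\xi$ is huge, so the asserted bound $\Theta\leq C$ there does not follow from the splitting you chose. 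More seriously, on the complementary set (however you define it, by $|\xi|$ or by $|\xi_\et|$) the quantity to be estimated is $\int M(t,x,\xi_\et/\lambda_1)\,dx\,dt$, \emph{not} $\int M(t,x,\xi/\lambda_1)\,dx\,dt$. Absolute continuity of the Lebesgue integral applies to a fixed integrable function, whereas here the integrand varies with $\et$ and is precisely the object for which you have no uniform-in-$\et$ $L^1$ majorant: the only pointwise bound available is the Jensen estimate multiplied by $\Theta(2\et,|\xi_\et|/\lambda_1)$, which is unbounded exactly on that set. Hence ``the bad set shrinks in measure'' yields no decay of its contribution, and the key inequality $L_1^{n,\et}\leq C\,L_3^{n}$ is not established; the modular convergence claim is left unproven at its central step.

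For comparison, the paper does not attempt any measure splitting. It isolates the uniform estimate as a separate statement (Lemma~\ref{lem:step2prevt}), proved for time averages of functions enjoying an $L^\infty$-type bound (after the normalization made there): then $|\wt{\eta}_\et(t,x)|\leq c(\Omega)$ uniformly in $\et$, so for small $\et$ condition ($\mathcal{M}$) gives directly $\Theta(2\et,|\wt{\eta}_\et|)\leq\Theta(2\et,c(\Omega))\leq C$, the infimum being taken over the enlarged time intervals $\wt{I}_i^\et$ and compared with the space--time cylinders of ($\mathcal{M}$) through $(M^{2\et}_{\lceil i/2\rceil,j})^{**}\leq(M_{i,\et})^{**}$; the rest is Jensen, the minorant property of the second conjugate, and Young's inequality for convolutions, exactly as in your outline. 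In other words, the $\Theta$-factor is tamed by boundedness of the averaged object, not by shrinking sets. To salvage your route you would need either such an a priori bound on the functions you feed into the averaging operator, or a genuinely different mechanism for the exceptional set; as written, that step fails.
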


As in approximation in space we need to prove the following uniform estimate. Minor modifications lead to the same result for  $ \vp \mapsto {\vp}_\et$.
\begin{lem}\label{lem:step2prevt} Let a locally integrable $N$-function $M$ satisfy assumptions ($\mathcal{M}$), resp.~($\mathcal{M}_p$). Consider the linear mapping $g\mapsto g_{\et}$ be given by~\eqref{t-vp-d}. Then,  there exist  constant  $C>0$ independent of ${\et}$, such that for all  sufficiently small ${\et}>0$ and every $\eta\in\VTMi$ we have
\begin{equation}
\label{in:Md<M-t}
\int_\OT M(t,x,\wt{\eta}_{\et}(t,x))\, dx\,dt\leq C \int_\OT M\left(t,x, \eta(t,x))\right)\, dx\,dt.
\end{equation}
\end{lem}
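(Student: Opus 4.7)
The plan is to mirror the spatial estimate of Lemma~\ref{lem:step2prev}, with condition $(\mathcal{M})$ applied in the time direction and Jensen's inequality replacing the Young inequality for convolutions. Fix $\delta = 2\et$, with $\et$ small enough that $2\delta<\delta_0$. As in that proof, partition $[0,T]$ into intervals $I_i^\delta$ of length $\delta$ and cover $\Omega$ by the cubes $\Qd$ of edge $2\delta$ together with their enlargements $\tQd$ of edge $4\delta$. Because the backward average $\wt{\eta}_{\et}$ reaches back by $\et\leq\delta$ from any $t\in I_i^\delta$, I enlarge $I_i^\delta$ to $\widehat{I}_i^\delta:=I_{i-1}^\delta\cup I_i^\delta$, of length $2\delta$, so that $[t-\et,t]\subset\widehat{I}_i^\delta$ for every $t\in I_i^\delta$. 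Define
\begin{equation*}
\widehat{M}_{i,j}^\delta(\xi) := \mathrm{ess\,inf}_{(\sigma,y)\in\widehat{I}_i^\delta\times\tQd}M(\sigma,y,\xi),
\end{equation*}
and let $(\widehat{M}_{i,j}^\delta)^{**}$ be its greatest convex minorant. Condition $(\mathcal{M})$ applied at scale $2\delta$ then gives
\begin{equation*}
M(t,x,\xi)\leq \Theta(2\delta,|\xi|)\,(\widehat{M}_{i,j}^\delta)^{**}(\xi)\qquad\text{on }I_i^\delta\times\Qd,\quad |\xi|>|\xi_0|.
\end{equation*}

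Decomposing $\iOT M(t,x,\wt{\eta}_{\et})\,dx\,dt = \sum_{i,j}\int_{I_i^\delta\times\Qd}M(t,x,\wt{\eta}_{\et})\,dx\,dt$, I estimate on each piece $M(t,x,\wt{\eta}_{\et})\leq \Theta(2\delta,|\wt{\eta}_{\et}|)(\widehat{M}_{i,j}^\delta)^{**}(\wt{\eta}_{\et})$. Applying Jensen's inequality to the convex function $(\widehat{M}_{i,j}^\delta)^{**}$ along the time variable,
\begin{equation*}
(\widehat{M}_{i,j}^\delta)^{**}(\wt{\eta}_{\et}(t,x)) \leq \frac{1}{\et}\int_{t-\et}^{t}(\widehat{M}_{i,j}^\delta)^{**}(\eta(\sigma,x))\,d\sigma \leq \frac{1}{\et}\int_{t-\et}^{t}M(\sigma,x,\eta(\sigma,x))\,d\sigma,
\end{equation*}
where the last inequality uses the chain $(\widehat{M}_{i,j}^\delta)^{**}\leq \widehat{M}_{i,j}^\delta\leq M(\sigma,x,\cdot)$, valid for $(\sigma,x)\in\widehat{I}_i^\delta\times\tQd$. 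Summing over $i,j$ --- each $\widehat{I}_i^\delta$ and each $\tQd$ meeting only a bounded number of neighbors --- and using Fubini together with the fact that $|\{t\in[0,T]:\sigma\in[t-\et,t]\}|=\et$ for every $\sigma$, one arrives at
\begin{equation*}
\iOT M(t,x,\wt{\eta}_{\et})\,dx\,dt \leq C\iOT M(\sigma,x,\eta(\sigma,x))\,d\sigma\,dx,
\end{equation*}
with $C$ independent of $\et$.

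The main obstacle is to control the factor $\Theta(2\delta,|\wt{\eta}_{\et}(t,x)|)$ uniformly. Condition $(\mathcal{M})$ only guarantees that $\Theta(\delta,\delta^{-N})$ remains bounded as $\delta\to 0^+$, yet unlike the spatial mollifier $S_\delta$ of Lemma~\ref{lem:step2prev}, time-averaging does not automatically produce the pointwise bound $|\wt{\eta}_{\et}|\leq\delta^{-N}$ (the averaging kernel $\et^{-1}\mathds{1}_{[t-\et,t]}$ has size $\et^{-1}$, not $\et^{-N}$). My plan is to follow the normalization step used in Lemma~\ref{lem:step2prev}: by linearity of $\eta\mapsto\wt{\eta}_{\et}$, convexity of $M$, and the $L^\infty$-preservation $\|\wt{\eta}_{\et}\|_{L^\infty}\leq\|\eta\|_{L^\infty}$, one reduces via truncation (controlled through monotone convergence in the modular $\int M(t,x,\eta)$) and rescaling to $\eta$ for which $|\wt{\eta}_{\et}(t,x)|\leq(2\delta)^{-N}$ on the relevant region; alternatively, under $(\mathcal{M}_p)$, the $L^p$-bound inherited from $\eta\in\VTMi$ together with $\limsup_{\delta\to 0^+}\Theta_p(\delta,\delta^{-N/p})<\infty$ gives the same control. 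In either case $\Theta(2\delta,|\wt{\eta}_{\et}|)\leq C$ uniformly for small $\delta$, and the proof concludes. The analogous bound for the forward average $\eta\mapsto\eta_{\et}$ follows by replacing $[t-\et,t]$ with $[t,t+\et]$ and shifting $\widehat{I}_i^\delta$ accordingly.
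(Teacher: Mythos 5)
Your main chain of estimates --- splitting $\OT$ into a time--space grid at scale comparable to $2\et$, applying $(\mathcal{M})$ with the enlarged intervals $\widehat{I}_i^\delta$ (this is exactly the paper's device $\wt{I}_i^\et\subset I^{2\et}_{\lceil i/2\rceil}$), then Jensen's inequality for the convex minorant, the minorant property $(\widehat{M}_{i,j}^\delta)^{**}\leq M(\sigma,x,\cdot)$ on $\widehat{I}_i^\delta\times\tQd$, and the Fubini/overlap count --- coincides with the paper's proof (the paper phrases the last step as the Young inequality for convolution in time, which is the same counting). The genuine gap is precisely at the step you flag as ``the main obstacle'': the uniform bound on $\Theta(2\delta,|\wt{\eta}_\et|)$. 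The reduction you propose does not work as stated. Rescaling $\eta\mapsto\eta/\lambda$ is not available here, because \eqref{in:Md<M-t} is a modular, not a norm, inequality: $M$ is not homogeneous, convexity gives only one-sided comparisons, and a bound for $\eta/\lambda$ does not transfer back to $\eta$. Truncation at level $K$ does reduce to bounded $\eta$ (via Fatou), but it only yields $|\wt{(T_K\eta)}_\et|\leq K$, so the requirement $K\leq(2\et)^{-N}$ forces the admissible range of $\et$ (and hence the constant) to depend on the truncation level, and you give no argument making the estimate uniform in $K$ before passing $K\to\infty$; in particular the asserted bound $|\wt{\eta}_\et|\leq(2\delta)^{-N}$ ``on the relevant region'' is never actually produced. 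The $(\mathcal{M}_p)$ alternative fails for the same reason: H\"older in time gives $|\wt{\eta}_\et(t,x)|\leq\et^{-1/p}\|\eta(\cdot,x)\|_{L^p(t-\et,t)}$, and the time-slice $L^p$-norm at fixed $x$ is not uniformly controlled, so this is not $\lesssim(2\et)^{-N/p}$ uniformly --- unlike the spatial case, where the kernel bound $\rho_\delta\leq\delta^{-N}$ does the job.

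What the paper does instead is simpler and uses an ingredient you list but then abandon: it normalizes (declared without loss of generality) $\|\eta\|_{L^\infty(0,T;L^\infty(\Omega))}\leq 1$, so that $|\wt{\eta}_\et(t,x)|\leq c$ with a fixed constant $c$ independent of $\et$, and then exploits that $\Theta$ is nondecreasing in its second argument: once $(2\et)^{-N}\geq c$, one has $\Theta(2\et,|\wt{\eta}_\et|)\leq\Theta(2\et,c)\leq\Theta(2\et,(2\et)^{-N})$, which is bounded for small $\et$ by \eqref{ass:M:vp} (resp.\ \eqref{ass:M:reg-p}). The point is that the time average need not be of size $\delta^{-N}$ at all; any bound by a fixed constant suffices, because the second argument of $\Theta$ is only compared with $\delta^{-N}$ through monotonicity. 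To repair your write-up you should replace the rescaling/truncation sketch by this monotonicity argument (together with whatever boundedness reduction you are willing to justify, as the paper does), and also say a word about the values $t<\et$, where $\wt{\eta}_\et$ involves $\eta(\sigma,x)$ for $\sigma<0$: the paper handles this by extending $M$ by zero outside $[0,T]$ and inserting the indicators $\mathds{1}_{\wt{I}_i^\et}(t-\sigma)$ before applying the minorant property, a point your Jensen step silently skips.
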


\begin{proof} Fix arbitrary $\eta\in \VTMi$ and small $d>0$. Within this proof we understand that $M$ is extended by $0$ outside $[0,T]$. We denote ${I}_i^\et=[t^\et_i,t^\et_{i+1})$, for $i=1,\dots,N^T_\et$, with $|{I}_i^\et|<\et$ and
\begin{equation}\label{wtI} 
\wt{I}_i^\et:=[t_i^\et-\et ,t^\et_{i+1})\cap[0,T].
\end{equation}
In the case of $ \vp \mapsto\wt{\vp}_\et$ we should extend the interval to the right.

 Note that 
\[|\wt{I}_i^\et|\leq 2 |{I}_i^\et|<2\et.\]
We consider
\begin{equation}\label{Mimu}
 {M}_{i,{\et}}(x,\xi):=\inf_{t\in  \wt{I}_i^d} M(t,x,\xi)
\end{equation} and its second conjugate $(M_{i,\et})^{**}(x,\xi)  =({(M_{i,\et}(x,\xi))}^*)^* $, see~Remark~\ref{rem:2ndconj}. Recall also notation for infimum over a cylinder~\eqref{Mijd}.

We observe that since for every $i=1,\dots,N^T_\et$ we have
\[\wt{I}^{\et}_i\subset {I}^{2\et}_{\lceil i/2 \rceil},\]
thus for a.e. $x\in\wt{Q}_j^\et$, $j=1,\dots,N_{\et}$,  and $i=1,\dots,N^T_{\et}$ also
$M_{\lceil i/2 \rceil,j}^{2\et}(\xi)\leq M_{i,\et }(x,\xi)$. Therefore,
\begin{equation}\label{mimij}
  \frac{M(t,x, \xi)}{(M_{i,\et })^{**}(x,\xi)}\leq\frac{M(t,x,\xi)}{(M_{\lceil i/2 \rceil,j}^{2\et})^{**}(\xi)}\leq\Theta\left(2\et,|\xi| \right).
\end{equation}

 Since $M(t,x,\xi)=0$ whenever $\xi=0$, we have
\begin{equation}
\label{M:div-mult-t}\begin{split}
 \iOT M(t,x, \wt{\eta}_{\et} (t,x)))\,dx\,dt&= \sum_{i=1}^{N_\et^T}\int_{\Omega} \int_{I_i^\et} M(t,x,  \wt{\eta}_{\et} (t,x))\, dt\, dx=\\&=\sum_{i=1}^{N_\et^T}\int_{\Omega} \int_{I_i^\et}  \frac{M(t,x, \wt{\eta}_{\et} (t,x))}{(M_{i,\et})^{**}(x, \wt{\eta}_\et(t,x))}{(M_{i,\et})^{**}(x,\wt{\eta}_{\et} (t,x))}\,dt\,dx.\end{split}
\end{equation}

We fix any $\xi\in\Omega$ and choose $Q_j^\et$ including $x$. Then, using assumption~\eqref{mimij} and (${\cal M}$), we realize that  for arbitrary $ t \in I_i^\et$ we get
\begin{equation}
\label{M/M<c-t}\frac{M(t,x, \wt{\eta}_{\et} (t,x))}{(M_{i,\et})^{**}(x,\eta_{\et}(t,x))}\leq \frac{M(t,x,\wt{\eta}_{\et}(t,x))}{(M_{\lceil i/2 \rceil,j}^{2\et})^{**}(\eta_{\et} (t,x))}\leq \Theta\left(2\et, |\wt{\eta}_{\et} (t,x)|\right).
\end{equation} We want to estimate it from above by a constant independent of $x,t,i,j$, and ${\et}$. Since without loss of generality it can be assumed that $\|\eta \|_{L^\infty(0,T;L^\infty(\Omega))}\leq 1$, we have
\begin{equation}
\label{xidest}\begin{split}|\wt{\eta}_{\et} (t,x)|&
\leq   \frac{1}{\et}\int_{t-\et}^{t} \left|{\eta(s,x)  }\right|ds \leq  |\Omega|\cdot\|\eta  \|_{L^\infty(0,T;L^\infty(\Omega))}\leq c(\Omega).\end{split}
\end{equation}
Then, we have $\Theta(2\et, | \wt{\eta}_{\et} (t,x)| )\leq \Theta ( 2\et,  c(\Omega) )\leq c$  by assumption  ($\mathcal{M}$), resp.~($\mathcal{M}_p$). Thus, we can estimate the right-hand side of~\eqref{M/M<c-t} by $c$. Using it in~\eqref{M:div-mult-t}, then extending the domain of integration  we obtain
\[
\begin{split}
 \iOT M(t,x,\wt{\eta}_{\et}(t,x))\,dx\,dt &\leq
c\sum_{i=1}^{N_\et^T}\iO \int_{I_i^\et}(M_{i,\et})^{**}
 \left(x,\int_\R\frac{1}{{\et}}\mathds{1}_{[0,\et)}(\sigma)\eta(t-\sigma,x)\,d\sigma
 \right)\,dt\,dx\\
 & =
 c\sum_{i=1}^{N_\et^T}\iO \int_{I_i^\et}(M_{i,\et})^{**}
 \left(x,\int_\R\frac{1}{{\et}}\mathds{1}_{[0,\et)}(\sigma)\mathds{1}_{I_i^\et}(t)\eta(t-\sigma,x)\,d\sigma
 \right)\,dt\,dx\\
 &
\leq
c\sum_{i=1}^{N_\et^T}\iO \int_{I_i^\et}(M_{i,\et})^{**}
 \left(x,\int_\R\frac{1}{{\et}}\mathds{1}_{[0,\et)}(\sigma)\mathds{1}_{\wt{I}_i^\et}(t-\sigma) \eta(t-\sigma,x)\,d\sigma
 \right)\,dt\,dx. 
\end{split}
\]
Finally we apply  the Jensen inequality, the fact that the second conjugate is (the greatest convex) minorant, and the Young inequality for convolution to get
\[
\begin{split}
 \iOT M(t,x,  \wt{\eta}_{\et} )\,dx\,dt 
 &
\leq
c\sum_{i=1}^{N_\et^T}\iO \int_{I_i^\et}\int_\R\frac{1}{{\et}}\mathds{1}_{[0,\et)}(\sigma)(M_{i,\et})^{**}
 \left(x,\mathds{1}_{\wt{I}_i^\et}(t-\sigma)\eta(t-\sigma,x)\right)\,d\sigma \,dt\,dx\\
 &
\leq
c\sum_{i=1}^{N_\et^T}\iO \int_{I_i^\et}\int_\R\frac{1}{{\et}}\mathds{1}_{[0,\et)}(\sigma)M
 \left(t-\sigma,x,\eta(t-\sigma,x)\right)\,d\sigma \,dt\,dx\\
 &
\leq
c \iO \int_{\R}\int_\R\frac{1}{{\et}}\mathds{1}_{[0,\et)}(\sigma)M
 \left(t-\sigma,x,\eta(t-\sigma,x)\right)\,d\sigma \,dt\,dx\\
 &
\leq
c \iO \frac{1}{{ \et}}\|\mathds{1}_{[0,d)}(\cdot)\|_{L^1(\R)}\cdot\|M
 \left(\cdot,x, \eta(\cdot,x)\right)\|_{L^1(\R)}\,dx\\ 
 &\leq C \| M
 \left(\cdot, \cdot,  \eta (\cdot, \cdot) \right)\|_{L^1(\OT)},
\end{split}
\]
what concludes the proof.
\end{proof}

We are in position to prove the approximation in time of regularizations defined in~\eqref{t-vp-d}.
\begin{proof}[Proof of Proposition~\ref{prop:time-app}] We show only the modular convergence $\nabla(\wt{\vp}_{\et})\to \nabla\vp$ using Lemma~\ref{lem:step2prevt}, because precisely the same reasoning works also for $\nabla( {\vp}_{\et})\to \nabla\vp$. We notice that from the definition of this regularization $ \vp_{{\et}} \in W^{1,\infty}(0,T;\VTMi)$ and $\nabla(\wt{\vp} _{\et})=\wt{(\nabla \vp)}_{\et} $. It suffices now to prove the modular convergence
\[ \nabla(\vp_{\et})\xrightarrow[{\et}\to 0]{M}\nabla\vp\quad\text{ in }\quad L_M(\Omega_T;\rn).\]

Let us consider a family of measurable sets  $\{ \tilde{E}_n \}_n$  such that $\bigcup_n \tilde{E}_n = \OT$ and a simple vector valued function $\tilde{E}^n(t,x)=\sum_{j=0}^n \mathds{1}_{\tilde{E}_j}(t,x) \tva_{j}(t,x),$ converging modularly to $\nabla \vp $ with $\tilde{\lambda}_4$ (cf.~Definition~\ref{def:convmod}) which exists due to Lemma~\ref{lem:dens}.    Note that
\[  \nabla (\wt{\vp}_{\et})- \nabla \vp= \left( \nabla (\wt{\vp}_{\et})-(\tilde{E}^n)_{\et}\right) +( (\tilde{E}^n)_{\et} - \tilde{E}^n )
	+ ( \tilde{E}^n  - \nabla\vp).\]

Convexity of $M(t,x, \cdot)$ implies
	\begin{equation*}
	\begin{split}
	 \iOT M \left(t, x, \frac{ \nabla (\wt{\vp}_{\et})- \nabla \vp }{ \lambda }\right) \,dx dt& \leq	\frac{\tilde{\lambda}_1}{\tilde{\lambda}} \iOT  M\left(t, x,
	\frac{\nabla(\wt{ \vp}_{\et})-(\tilde{E}^n)_{\et}}{ \tilde{\lambda}_1} \right) \,dx dt\\
	&\quad +\frac{\tilde{\lambda}_2}{\tilde{\lambda}} \iOT M\left( t,x,
	\frac{(\tilde{E}^n)_{\et}-\tilde{E}^n }{\tilde{\lambda}_2} \right) \,dx dt+ \frac{\tilde{\lambda}_3}{\tilde{\lambda}} \iOT  M\left( t,x,  \frac{ \tilde{E}^n - \nabla  \vp }{\tilde{\lambda}_3} \right) \,dx dt\\
	&=L^{ n,\et}_1+L^{ n,\et}_2+L^{n}_3,
	\end{split}
	\end{equation*}
where $\tilde{\lambda}= \sum_{i=1}^3\tilde{\lambda}_i$, $\tilde{\lambda}_i>0$.  We have $\tilde{\lambda}_3$ fixed already. Let us take $\tilde{\lambda}_1=\tilde{\lambda}_3$.

In order to pass to the limit with ${\et}\to 0$, we apply Lemma~\ref{lem:step2prevt}  estimating \[0\leq \lim_{{\et}\to 0}L^{ n,\et}_1\leq C L^{ n }_3.\]
 Furthermore,  Lemma~\ref{lem:dens} implies that $\lim_{n\to\infty}  L^{ n }_3= 0,$  which entails $\lim_{n\to\infty}  \limsup_{{\tm}\to \infty} L^{ n,{\tm}}_1= 0$ as well.

 Let us concentrate on $L^{ n,{\et}}_2$.
The Jensen  inequality and then the Fubini theorem lead to
	\begin{equation}\label{IE:aw17'}
	\begin{split}
	\frac{\tilde{\lambda} }{\tilde{\lambda}_2} L^{ n,{\et}}_2  
	& =\sum_{i=1}^{N_{\et}^T}\int_{\Omega }\int_{I_i^{\et}}
  M \left(t, x, \frac{1}{\tilde{\lambda}_2} \int_{\r} \frac{1}{\et}\mathds{1}_{[0,\et)}(s ) \sum_{j=0}^n [  \mathds{1}_{\tilde{E}_j}(t,x) \tva_j (t,x)- \mathds{1}_{\tilde{E}_j}(s-t,x) \tva_j (s-t,x) ]\,ds \right)\, dt\,dx\\ &
	\leq 
	  \sum_{i=1}^{N_{\et}^T}\int_{\Omega }\int_{I_i^{\et}}  \frac{1}{\et}\mathds{1}_{[0,\et)}(s )
	   M \left(t, x, \frac{1}{\tilde{\lambda}_2} \sum_{j=0}^n [   \mathds{1}_{\tilde{E}_j}(t,x) \tva_j (t,x) -\mathds{1}_{\tilde{E}_j}(s-t,x) \tva_j(s-t,x) ]
	   \right) \,ds\,dt\,dx\\ &
	\leq 
	  \int_{\Omega_i}  \sum_{i=1}^{N_{\et}^T}\int_{I_i^\et}
	   M \left(t, x, \frac{1}{\tilde{\lambda}_2} \sum_{j=0}^n [   \mathds{1}_{\tilde{E}_j}(t,x) \tva_j (t,x) -\mathds{1}_{\tilde{E}_j}(s-t,x) \tva_j(s-t,x) ]
	   \right) \,dt\,dx.
	\end{split}
	\end{equation}
We let ${\et}\to 0$. Notice that using the continuity of the shift operator in $L^1$ we observe that poinwisely
	\[  \sum_{j=0}^n [  \mathds{1}_{\tilde{E}_j}(t,x) \tva_j (t,x)- \mathds{1}_{\tilde{E}_j}(s-t,x) \tva_j(s-t,x)   ]\xrightarrow[{\et}\to  0]{} 0,\]
because $s-t<{\et}$. Moreover, for arbitrary $\tilde{\lambda}_2>0$ we have
	\[\begin{split}
	 &M \left(t, x, \frac{1}{\tilde{\lambda}_2} \sum_{j=0}^n [  \mathds{1}_{\tilde{E}_j}(t,x) \tva_j (t,x)- \mathds{1}_{\tilde{E}_j}(s-t,x) \tva_j(s-t,x)   ] \right) \leq \sup_{\eta\in\rn:\ | \eta|=1}M \left(t, x, \frac{1}{ \tilde{\lambda}_2} \sum_{j=0}^n| \tva_j|  \eta \right)  <\infty
	 \end{split}\]
and
the Lebesgue Dominated Convergence Theorem provides the right-hand side of \eqref{IE:aw17'} converges to zero.

Passing to the limit completes the proof of modular convergence of the approximating sequence. The modular convergence of gradients implies their strong $L^1$-convergence and Poincar\'e inequality ends the proof. The $L^\infty$ norm is preserved directly due to the formula~\eqref{t-vp-d}.
\end{proof}
\section{Auxiliary results}
In this section we provide integration-by-parts formula, comparison principle, and existence to a regularized problem.

\subsection{Integration-by-parts formula}\label{sec:ren} 
 
Unless the growth of the modular function is comparable with the power function the Musielak-Orlicz setting inherits from Orlicz spaces the well-known problem with the so-called {\it{integration-by-parts formula}}, see~\cite{ZaGa,IC-pocket}.  As it can be expected, it holds only in absence of Lavrentiev's phenomenon and in spite of its similarity to~\cite[Lemma~2.1]{pgisazg2} its proof substantially involves `approximation in time' result  (Proposition~\ref{prop:time-app}). 

\begin{prop}[Integration-by-parts formula]\label{prop:intbyparts}
Suppose $M$ is a locally integrable $N$-function satisfying condition ($\mathcal{M}$) (resp.~($\mathcal{M}_p$)),  $u:\Omega_T\to\r$ is a measurable function such that for every $k\geq 0$, $T_k(u)\in \VTM$, $u(t,x)\in L^\infty([0,T];L^1(\Omega))$. Let us assume that there exists $u_0\in L^1(\Omega)$ such that $u_0(x):=u(0,x)$. Furthermore, assume that there exist $A\in L_{M^*}(\Omega_T;\rn)$  and $F\in L^1(\Omega_T)$ satisfying
\begin{equation}
\label{eq:prop:int-by-parts-1}
-\int_{\OT}(u-u_0)\partial_t \vp \,dx\,dt+\int_{\OT}A\cdot \nabla\vp \,dx\,dt=\int_{\OT}F\, \vp \,dx\,dt,\qquad \forall_{\vp\in{C_c^\infty}([0;T)\times \Omega)}.
\end{equation}
Then
\begin{equation}
\label{claim:prop:int-by-parts-1}
-\int_{\OT} \left(\int_{u_0}^u h(\s)d\s\right) \partial_t \xi \ \,dx\,dt+\int_{\OT}A\cdot \nabla (h(u)\xi) \,dx\,dt=\int_{\OT}F h(u)\xi \,dx\,dt
\end{equation}
holds for all $h\in W^{1,\infty}(\r)$, such that $\supp (h')$ is compact and all $\xi\in \VTMi$, such that $\partial_t\xi\in L^\infty(\OT)$ and $\supp\xi(\cdot,x)\subset[0,T)$ for a.e. $x\in\Omega$, in particular for  $\xi \in C_c^\infty([0,T)\times\overline{\Omega})$.
\end{prop}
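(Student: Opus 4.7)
The plan is to enlarge the class of admissible test functions in~\eqref{eq:prop:int-by-parts-1} and then to insert a time-regularized variant of $h(u)\xi$, taking advantage of the approximation results of the previous section. First, combining Theorem~\ref{theo:approx-sp} (space), Proposition~\ref{prop:time-app} (time), and a cut-off near $t=T$ that preserves the condition $\supp\vp(\cdot,x)\subset[0,T)$, I would upgrade~\eqref{eq:prop:int-by-parts-1} from $\vp\in C_c^\infty([0,T)\times\Omega)$ to the larger class of all $\vp\in\VTMi\cap L^\infty(\OT)$ with $\partial_t\vp\in L^\infty(\OT)$ and $\supp\vp(\cdot,x)\subset[0,T)$. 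The passage in the divergence term relies on~\eqref{inq:Holder}, on $A\in L_{M^*}(\OT;\rn)$, and on the modular convergence of the approximants' gradients, while the $L^\infty$-bound is preserved by each step.

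Next, I would fix $M$ with $\supp h'\subset[-M,M]$ and choose $k>M$. Since $h$ is constant on each component of $\R\setminus[-M,M]$, $h(u)=h(v)$ where $v:=T_k(u)\in\VTMi\cap L^\infty(\OT)$. Applying Proposition~\ref{prop:time-app} to the extension of $v$ by $T_k u_0$ on $(-\sigma,0)$, one obtains the backward Steklov average $v^\sigma$ from~\eqref{t-vp-d} with $v^\sigma(0,\cdot)=T_k u_0$, $\|v^\sigma\|_\infty\leq k$, $\partial_t v^\sigma=(v(t,x)-v(t-\sigma,x))/\sigma\in L^\infty(\OT)$, $v^\sigma\to v$ a.e., and $\nabla v^\sigma\to\nabla v$ modularly in $L_M(\OT;\rn)$. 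Then $\vp:=h(v^\sigma)\xi$ is admissible; plugging it into the extended identity, expanding $\partial_t\vp=h'(v^\sigma)\partial_t v^\sigma\cdot\xi+h(v^\sigma)\partial_t\xi$, and splitting $u-u_0=(u-v^\sigma)+(v^\sigma-u_0)$ in the first summand, I would use the chain rule $h'(v^\sigma)\partial_t v^\sigma=\partial_t h(v^\sigma)$ (valid because $v^\sigma$ is smooth in $t$) followed by a time integration by parts to rewrite the $(v^\sigma-u_0)$-piece as $-\int_{\OT}\tilde\Phi(v^\sigma)\partial_t\xi$, where $\tilde\Phi(s):=(s-u_0)h(s)-\int_{u_0}^s h(\tau)d\tau$. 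The boundary term at $t=0$ vanishes because $\tilde\Phi(T_k u_0)=0$: indeed, since $h$ is constant on each component of $\R\setminus[-M,M]$ and $k>M$, one has $\int_{u_0}^{T_k u_0}h(\tau)d\tau=(T_k u_0-u_0)h(T_k u_0)$ in every case.

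Passing $\sigma\to 0$, the right-hand side, the divergence term, the $h(v^\sigma)\partial_t\xi$-piece, and the $\tilde\Phi(v^\sigma)\partial_t\xi$-piece will each converge to their expected limits by Lebesgue dominated convergence (using $h,h',\xi\in L^\infty$, $u\in L^\infty(0,T;L^1(\Omega))$, $|v^\sigma|\leq k$, and the modular convergence of $\nabla v^\sigma$ combined with $A\in L_{M^*}$); a short algebraic manipulation based on $\int_{u_0}^u h=\int_{u_0}^v h+(u-v)h(v)$ (which holds because $h$ is constant on each component of $\R\setminus[-M,M]$ and $k>M$) then gives~\eqref{claim:prop:int-by-parts-1}. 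The main obstacle will be the residual term
\[
R_\sigma:=-\int_{\OT}(u-v^\sigma)\,h'(v^\sigma)\,\partial_t v^\sigma\,\xi\,dx\,dt,
\]
for which $\partial_t v^\sigma$ is only bounded by $2k/\sigma$ and does not decay with $\sigma$. The trick will be to exploit that on $\{|u|\leq k\}$ we have $u=v$ and hence $u-v^\sigma=v-v^\sigma\to 0$ a.e., while on $\{|u|>k\}$ one has $v^\sigma\to\pm k\notin\supp h'$ a.e., so $h'(v^\sigma)\to 0$ a.e. A Fubini-type rewriting of $(v-v^\sigma)(v(t)-v(t-\sigma))/\sigma$ as a double average over $[0,\sigma]\times[0,\sigma]$, together with the $L^\infty$-bounds on $v,h',\xi$ and a Vitali-type uniform integrability argument, will then deliver $R_\sigma\to 0$. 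This Steklov-type limit passage is the core technical step of the proof.
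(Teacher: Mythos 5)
Your first step (enlarging the admissible test class via Theorem~\ref{theo:approx-sp} and Proposition~\ref{prop:time-app}) and your algebra for the $(v^\sigma-u_0)$-piece are fine, but the proof collapses at exactly the point you yourself single out: the residual term $R_\sigma=-\int_{\OT}(u-v^\sigma)h'(v^\sigma)\partial_t v^\sigma\,\xi\,dx\,dt$. The argument you sketch (a.e.\ convergence of one factor, ``Fubini-type rewriting'', ``Vitali-type uniform integrability'') cannot work as stated. On $\{|u|\le k\}$ you have $u-v^\sigma=v-v^\sigma\to 0$ a.e., but the other factor $\partial_t v^\sigma=(v(t)-v(t-\sigma))/\sigma$ is of size $1/\sigma$ and, for $v=T_k(u)$ with only $T_k(u)\in \VTM$, the product $(v-v^\sigma)\partial_t v^\sigma$ neither converges a.e.\ nor is known to be bounded in $L^1(\OT)$, let alone uniformly integrable --- uniform integrability is precisely what a Vitali argument would have to assume. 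Making $R_\sigma\to 0$ rigorous would require a quantitative fractional time-regularity estimate of the type $\frac{1}{\sigma}\int_{\OT}|v(t)-v(t-\sigma)|\,|v-v^\sigma|\,dx\,dt\le C$, and no such estimate is available here: $\partial_t u$ is only a distribution given by \eqref{eq:prop:int-by-parts-1}, truncation does not commute with the Steklov average, so $\partial_t v^\sigma$ cannot be replaced using the equation, and extracting such an estimate would essentially presuppose the integration-by-parts formula you are trying to prove. The same problem appears on $\{|u|>k\}$: there $h'(v^\sigma)\to 0$ a.e., but it is multiplied by $\partial_t v^\sigma\sim 1/\sigma$ and by $u-v\in L^1$ only, so again no domination.

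This is why the paper proceeds differently: it never composes $h$ with a time-regularization of $u$, so no such cross term ever arises. Instead it splits $h=h_1+h_2$ into a nondecreasing and a nonincreasing Lipschitz part, regularizes the \emph{test function} $\zeta=h_1(T_k(u))\xi$ by the forward and backward Steklov averages \eqref{t-vp-d} (smoothed in space by Theorem~\ref{theo:approx-sp}), and converts the resulting difference quotients of $u$ into primitives of $h_1\circ T_k$ through the monotonicity inequality \eqref{h1-conv}. Each direction of averaging yields a one-sided inequality (\eqref{est-integr-lim} and \eqref{est-integr-lim-2}), and only their combination gives the identity \eqref{claim:prop:int-by-parts-1}; the sign information coming from monotonicity of $h_1$ replaces the vanishing of a cross term. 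If you want to salvage your route, you would have to import the same ingredients --- the monotone splitting of $h$ and one-sided convexity inequalities (or a Landes-type regularization whose time derivative has a sign relative to $u-v^\sigma$) --- at which point you are reproducing the paper's mechanism rather than giving a genuinely simpler proof. As written, the proposal has a genuine gap at its core technical step.
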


\begin{proof} Let $h\in W^{1,\infty}(\r)$ be such that $\supp (h')$ is compact. Let us note that  $h_1,h_2:\R\to\R$ given by
\[h_1(t)=\int_{-\infty}^t (h')^+(s)\,ds,\ g^+:=\max\{0,g\},\quad h_2(t)=\int_{-\infty}^t (h')^-(s)\,ds,\ g^-:=\min\{0,g\}\]
are   Lipschitz continuous functions. Moreover, $h_1$ is non-decreasing, $h_2$ is non-increasing, $h=h_1+h_2$ and in the case {\it ii)} $h_1(0)=0=h_2(0).$ In both cases there exists $k>0$ such that $\supp(h')\subset [-k,k]$, thus $h(u)=h(T_k(u))=h_1(T_k(u))+h_2(T_k(u))$. Furthermore, \[h_1(T_k(u)),h_2(T_k(u))\in L^\infty(\OT)\quad\text{and}\quad \nabla (h_1(T_k(u))),\nabla (h_2(T_k(u)))\in L_M(\OT;\rn).\] It follows from the existence of modularly converging sequence $\nabla (T_k(u))_\delta$, cf. Theorem~\ref{theo:approx-sp}, which via Definition~\ref{def:convmod} implies uniform integrability of~$\left\{M\left(x,{h_1'((T_k (u))_\delta)\nabla (T_k(u))_\delta}/{\lambda}\right)\right\}_\delta$ for some $\lambda>0$.

We start with the proof for nonnegative $\xi$, which we extend in the following way \begin{equation}\label{xi:ext}
\xi(t,x)=\left\{\begin{array}{ll}0,&t>T,\\
\xi(t,x),&t\in[0,T],\\
\xi(-t,x),& t<0.
\end{array}\right.\end{equation} Additionally, we extend $u(t,x)=u_0(x)$ for $t<0$.  Let us define
\begin{equation} \label{t-zeta} 
 \zeta := h_1(T_k(u))\xi. \end{equation}
We fix ${\et}>0$ and recall
\begin{equation}\label{t-zeta-d} \zeta_{\et}(t,x) := \frac{1}{{\et}}\int_t^{t+{\et}}\zeta(\sigma,x)\,d\sigma \qquad \text{and} \qquad
 \wt{\zeta}_{\et}(t,x) := \frac{1}{{\et}}\int_{t-{\et}}^t\zeta(\sigma,x)\,d\sigma. 
\end{equation}
Note that due to the same reasoning as for $h(u)$, also $\zeta_{\et}, \wt{\zeta}_{\et}(t,x):\OT\to\r$ belong to $\VTMi$. Furthermore, $\partial_t \zeta_{\et},\partial_t \wt{\zeta}_{\et}(t,x)\in L^\infty(\OT)$. Then $\zeta_{\et}(T,x)= \wt{\zeta}_{\et}(0,x)=0$ for all $x\in\Omega$ and ${\et}>0$. We can use approximate sequences $(\zeta_{\et})_\delta, (\wt{\zeta}_{\et})_\delta$ from Theorem~\ref{theo:approx-sp}  as  test functions  in~\eqref{eq:prop:int-by-parts-1}. Indeed, we have  $(\zeta_{\et})_\delta, (\wt{\zeta}_{\et})_\delta\in  W^{1,\infty} (0,T; C_c^\infty(\Omega))$ to which we can extend the functionals described by each of the integrals we arrive, namely in
\begin{equation}\label{1sttestve}
  \iOT A\cdot \nabla ( (\zeta_{\et})_\delta)  \,dx\,dt-\iOT F ( \zeta_{\et})_\delta \,dx\,dt =\iOT \left(u(t,x)-u_0(x)\right)\partial_t(\left(\zeta_\et\right)_\delta) \,dx\,dt.
\end{equation}
  Since modular convergence entails weak one and $\{( \zeta_{\et})_\delta \}_{\delta }$ is  uniformly bounded in $L^\infty $, the Lebesgue Dominated Convergence Theorem enables to pass to the limit with  $\delta\to 0$.  In turn, we obtain
\begin{equation}\label{1sttest}
\begin{split}
\iOT A\cdot \nabla \zeta_{\et} \,dx\,dt-\iOT F \zeta_{\et} \,dx\,dt&=\iOT \left(u(t,x)-u_0(x)\right)\frac{1}{{\et}}\left(\zeta(t+{\et},x)-\zeta(t,x)\right)dxdt=  \frac{1}{{\et}}\left(J_1+J_2+J_3\right),
\end{split}
\end{equation}
where $\zeta(t,x)=0$ for $t>T$, $\xi$ is extended by~\eqref{xi:ext} $u(t,x)=u_0(x)$ for $t<0$, and
\begin{eqnarray}
J_1&=&\int_0^T\iO \zeta(t+{\et},x)u(t,x)dxdt=\int_{\et}^T\iO \zeta(t,x)u(t-{\et},x)dxdt\nonumber,\\
J_2&=&-\int_0^T\iO \zeta(t,x)u(t,x)dxdt,\label{J2}\\
J_3&=&-\int_0^T\iO \zeta(t+{\et},x)u_0(x)\,dxdt+\int_0^T\iO  \zeta(t,x) u_0(x)\,dxdt=  \int_0^{\et} \iO \zeta(t,x)  u(t-{\et},x)dxdt.\label{J3}
\end{eqnarray}
Using~\eqref{J2} and~\eqref{J3} in~\eqref{1sttest} we get
\begin{equation}\label{1sttest-appl}
\begin{split}
 \iOT A\cdot \nabla \zeta_{\et} \,dx\,dt-\iOT F \zeta_{\et} \,dx\,dt&=\iOT \frac{1}{{\et}} \zeta(t,x) \left(u(t-{\et},x)-u(t,x)\right)dxdt.
\end{split}
\end{equation}
Note that for any $s_1,s_2\in\R$ we have
\begin{equation}
\label{h1-conv}
\int_{s_1}^{s_2} h_1(T_k(\s))d\s \geq  {h_1}(T_k(s_1)) (s_2-s_1).
\end{equation}
Then
\begin{equation*}
\begin{split}
\frac{1}{{\et}} \iOT  \zeta(t,x) \left(u(t-{\et},x)-u(t,x)\right)dxdt\leq \frac{1}{{\et}} \iOT  \xi(t,x) \int_{u(t,x)}^{u(t-{\et},x)} h_1(T_k(\s))d\s\ dx\,dt.
\end{split}
\end{equation*}
Applying it in~\eqref{1sttest-appl}, following the same reasoning as in~\eqref{J3}, we get
\begin{equation}\label{est-integr2}
\begin{split}
& \iOT A\cdot \nabla \zeta_{\et} \,dx\,dt-\iOT F \zeta_{\et} \,dx\,dt\leq  \frac{1}{{\et}} \iOT  \xi(t,x) \left(\int_{u(t,x)}^{u(t-{\et},x)} h_1(T_k(\s))d\s\right) dx\,dt=\\
&\qquad\qquad\qquad\qquad\qquad= \frac{1}{{\et}} \iOT ( \xi(t+{\et},x)-\xi(t,x))\left(\int_{u(0,x)}^{u(t-{\et},x)} h_1(T_k(\s))d\s\right) dx\,dt .
\end{split}
\end{equation}
Passing to a subsequence if necessary, we have $\zeta_{\et}\xrightharpoonup[]{*}\xi h_1(T_k(u))$ weakly-* in $L^\infty(\OT)$, when ${\et}\searrow 0$. Since $\nabla \zeta_{\et}=[(\nabla \xi) h_1(T_k(u))]_{\et} +[\xi \nabla ( h_1(T_k(u)))]_{\et} $ and $[(\nabla\xi) h_1(T_k(u))]_{\et} \xrightharpoonup[]{*}(\nabla \xi) h_1(T_k(u))$ weakly-* in $L^\infty(\OT;\rn)$, when ${\et}\searrow 0$.

Unlike the case of the modular function independent of the time variable, here the following modular convergence in $L_M(\OT;\rn)$ \[\nabla \zeta_d \xrightarrow[d\searrow 0]{M} \nabla (\xi h_1(T_k(u)))\]
is not a direct consequence of the Jensen inequality as it was in the case of the modular function independent of the time variable. To justify it we need Proposition~\ref{prop:time-app} (provided in Appendix).

 Moreover, $\zeta_d \xrightharpoonup[]{*} \xi h_1(T_k(u))$ weakly-* in $L^\infty(\OT)$, when ${\et}\searrow 0$. Therefore,  passing to the limit in~\eqref{est-integr2}  implies
\begin{equation}\label{est-integr-lim}
\begin{split}
& \iOT A\cdot \nabla (h_1(T_k(u))\xi) \,dx\,dt-\iOT F (h_1(T_k(u))\xi) \,dx\,dt\leq \iOT \partial_t\xi \int_{u_0}^{u(t,x)} {h_1}(T_k(\s))\,d\s\  dx\,dt.
\end{split}
\end{equation}

Since $T_k(u_0)\in L^\infty(\Omega),$ there exists a sequence $\{u_{0}^{n}\}_n\subset{C_c^\infty}(\Omega)$ such that $T_k(u_{0}^{n})\to T_k(u_0)$ in~$L^1(\Omega)$ and a.e. in~$\Omega$ as $n\to\infty$. For $t<0$ and all $x\in\Omega$ we put $u(t,x)=u_{0}(x)$. Recall that we consider nonnegative $\xi\in  C_c^\infty([0,T)\times\Omega )$ extended by~\eqref{xi:ext}. Note that the sequence $ \{(\wt{\zeta}_{\et})_\delta\}$ approximating $\wt{\zeta}_{\et}$, given by~\eqref{t-zeta-d}, can be used as a test function in~\eqref{eq:prop:int-by-parts-1}. Via arguments of~\eqref{1sttestve}, we pass to the limit with   $\delta\to 0$ getting
\begin{equation*}
\begin{split}
 \iOT A\cdot \nabla \wt{\zeta}_{\et} \,dx\,dt-\iOT F \wt{\zeta}_{\et} \,dx\,dt&=\iOT \frac{1}{{\et}}\left(\zeta(t,x)-\zeta(t-{\et},x)\right)\left(u(t,x)-u_0(x)\right)=  \frac{1}{{\et}}\left(K_1+K_2+K_3\right),
\end{split}
\end{equation*}
where
\begin{eqnarray}
K_1&=&\int_0^T\iO \zeta(t ,x)u(t,x)dxdt=\int_{\et}^{T+{\et}}\iO \zeta(t-{\et},x)u(t-{\et},x)dxdt\nonumber,\\
K_2&=&-\int_0^T\iO \zeta(t-{\et},x)u(t,x)dxdt,\label{K2}\\
K_3&=&-\int_0^T\iO \zeta(t ,x)u_0(x)\,dxdt+\int_0^T\iO  \zeta(t-{\et},x) u_0(x)\,dxdt =\int_0^{\et} \iO \zeta(t-{\et},x)  u_0(x)dxdt.\label{K3}
\end{eqnarray}
Therefore,~\eqref{K2} and~\eqref{K3} give
\begin{equation}\label{2ndtest}
\begin{split}
 \iOT A\cdot \nabla \wt{\zeta}_{\et} \,dx\,dt-\iOT F \wt{\zeta}_{\et} \,dx\,dt&= \frac{1}{{\et}}\left(L_1+L_2\right),
\end{split}
\end{equation}
with
\begin{eqnarray*}
L_1&=&\int_{\et}^T\iO \zeta(t-{\et} ,x)(u(t-{\et},x)-u(t,x))\,dxdt\nonumber,\\
L_2&=&\int_0^{\et}\iO h_1(T_k(u^n_{0}))\xi(u(t-{\et},x)-u(t,x))\,dxdt\\
&&+ \int_0^{\et}\iO (h_1(T_k(u _{0}))-h_1(T_k(u_{0}^{n})))\xi(u(t-{\et},x)-u(t,x))\,dxdt
\end{eqnarray*}
for sufficiently small ${\et}$, because $\xi(\cdot,x)$ has a compact support in $[0,T)$ almost everywhere in~$\Omega$. Due to~\eqref{h1-conv}, we have
\begin{equation}
\label{b-eta}
\begin{split}\int_{u(t-{\et},x)}^{u(t,x)}-(h_1(T_k(\s)))\,d\s&\leq -(u(t,x)-u(t-{\et},x))h_1(T_k(u(t-{\et},x)))\quad \text{ a.e. in }({\et},T)\times\Omega,\\
\int_{u(t-{\et},x)}^{u(t,x)}-(h_1(T_k(\s)))\,d\s  & \leq -(u(t,x)-u_0)h_1(T_k(u_0)) \quad  \text{ a.e. in }(0,{\et})\times\Omega.\end{split}
\end{equation}

Combining~\eqref{2ndtest} and~\eqref{b-eta} we get\begin{equation*}
\begin{split}
& \iOT A\cdot \nabla \wt{\zeta}_{\et} \,dx\,dt- \iOT F\wt{\zeta}_{\et} \,dx\,dt\\
& \geq    \frac{1}{{\et}} \iOT  \xi(t,x) \left(\int_{u(t,x)}^{u(t-{\et},x)} h_1(T_k(\s))d\s \right) dx\,dt+\int_0^{\et} \iO (h_1(T_k(u_{0}))-h_1(T_k(u_{0}^{n})))\xi(u_0-u(t,x)) dx\,dt\\
& \geq  \frac{1}{{\et}} \iOT ( \xi(t-{\et},x)-\xi(t,x)) \left(\int_{u(t,x)}^{u(t-{\et},x)} h_1(T_k(\s))d\s \right) dx\,dt\\
&  -  \iO |h_1(T_k(u_{0}^{n}))-h_1(T_k(u_{0}))||\xi|(|u_0|+|u(t,x)|) dx\,dt.
\end{split}
\end{equation*}

To pass with ${\et}\searrow 0$ and then $n\to\infty$ on the left-hand side above, as in~\eqref{est-integr-lim}, by the Lebesgue Dominated Convergence Theorem we obtain\begin{equation}\label{est-integr-lim-2}
\begin{split}
& \iOT A\cdot \nabla (h_1(T_k(u))\xi) \,dx\,dt-\iOT F (h_1(T_k(u))\xi) \,dx\,dt\geq\frac{1}{{\et}} \iOT \partial_t\xi  \int_{u_0}^{u(t,x)} {h_1}(T_k(\s))\,d\s\  dx\,dt.
\end{split}
\end{equation}

Combining~\eqref{est-integr-lim} with~\eqref{est-integr-lim-2} we conclude
\begin{equation}\label{end}
\begin{split}
& \iOT A\cdot \nabla (h_1(T_k(u))\xi) \,dx\,dt-\iOT F (h_1(T_k(u))\xi) \,dx\,dt=\frac{1}{{\et}} \iOT \partial_t\xi  \int_{u_0}^{u(t,x)} {h_1}(T_k(\s))\,d\s\  dx\,dt.
\end{split}
\end{equation}
for all nondecreasing and Lipschitz $h_1:\R\to\R$ and for all nonnegative $\xi$ satisfying {\it i)} or {\it ii)}, respectively.

We can replace $h_1(T_k(u))$ by  $-h_2(T_k(u))$ in~\eqref{end} and in turn we can also replace it by  $h(T_k(u))=h(u).$ For $\xi$ satisfying {\it i)} or {\it ii)}, we have $\xi=\xi^++\xi^-$, where $\xi^+,\xi^-\in \VTMi$ or in $C_c^{\infty}([0,T)\times\Omega)$, respectively for  {\it i)}, {\it ii)}, which leads to the claim.\end{proof}

\subsection{Comparison principle}

The comparison principle we provide below is the consequence of choice proper family of test functions. The result is applied in the proof of uniqueness of solutions.

\begin{theo}\label{prop:comp-princ} 
Suppose that assumptions of Proposition~\ref{prop:intbyparts} are satisfied and for $i=1,2$ functions $v^i$ satisfy~\eqref{claim:prop:int-by-parts-1} with $  f^i\in L^1(\OT)$, $ v^i_0\in L^1(\Omega),$ and additionally
\begin{equation}\label{decay}\int_{ \{l<|v^i|<l+1\}}A(t,x,\nabla v^i)\cdot\nabla v^i\, dx\,dt\xrightarrow[l\to\infty]{} 0.\end{equation}If $f^1\leq f^2$ a.e. in~$\OT$ and $ v^1_0 \leq v^2_0$ in~$\Omega,$ then also $v^1\leq v^2$ a.e. in~$\OT$.
\end{theo}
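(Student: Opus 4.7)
Let $w := v^1 - v^2$, $w_0 := v^1_0 - v^2_0\leq 0$, $B := A(\cdot,\cdot,\nabla v^1) - A(\cdot,\cdot,\nabla v^2)$, and $F := f^1 - f^2\leq 0$. The aim is to show $w^+\equiv 0$ a.e.\ in $\OT$. The plan is to derive a renormalized identity for the difference $w$ and then test it with a careful approximation of the positive part. Starting from \eqref{claim:prop:int-by-parts-1} applied to each $v^i$ with $h = h_m$ a truncation approaching~$1$, I pass to the limit $m\to\infty$ using the decay condition~\eqref{decay} to obtain the distributional form
\[
-\iOT (w - w_0)\partial_t\vp\,dxdt + \iOT B\cdot\nabla\vp\,dxdt = \iOT F\vp\,dxdt\qquad \forall\vp\in C_c^\infty([0,T)\times\Omega).
\]
Next I verify that Proposition~\ref{prop:intbyparts} applies to the quadruple $(w,B,F,w_0)$: $B\in L_{M^*}(\OT;\rn)$ and $F\in L^1(\OT)$ are immediate, $w\in L^\infty(0,T;L^1(\Omega))$ is inherited, and the initial trace identifies $w_0$. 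The non-trivial requirement is $T_k(w)\in\VTM$ for every $k\geq 0$; I obtain it by approximating $T_k(w)$ with $T_k(T_m(v^1)-T_m(v^2))$ as $m\to\infty$, with uniform modular bounds on the gradients coming from the growth--coercivity bound ($\mathcal{A}$\ref{A2}), $M(t,x,\nabla v^i)\leq A(t,x,\nabla v^i)\!\cdot\!\nabla v^i$, together with~\eqref{decay}, which extinguishes the contributions from the regions where $|v^i|$ is large.

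With Proposition~\ref{prop:intbyparts} in hand for $w$, I test the renormalized identity with $h=h_\eta$ and $\xi=\xi_\tau$, where $h_\eta\in W^{1,\infty}(\R)$ is nondecreasing, vanishes on $(-\infty,0]$, equals $1$ on $[\eta,\infty)$, and has $\supp h_\eta'\subset[0,\eta]$ compact; while $\xi_\tau(t)\in[0,1]$ depends only on $t$, equals $1$ on $[0,\tau]$, decreases linearly on $[\tau,\tau+\delta]$ and vanishes on $[\tau+\delta,T]$. Setting $H_\eta(s):=\int_0^s h_\eta(\sigma)\,d\sigma$ and using $w_0\leq 0$ together with $h_\eta\equiv 0$ on negatives, I obtain $\int_{w_0}^w h_\eta(\sigma)\,d\sigma = H_\eta(w^+)$. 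Passing $\delta\to 0$ in the time term yields $\iO H_\eta(w^+(\tau,\cdot))\,dx$ for a.e.\ $\tau$; the spatial term expands, via $\nabla\xi_\tau=0$, into $\iOT h_\eta'(w)\,\bigl(A(\cdot,\cdot,\nabla v^1)-A(\cdot,\cdot,\nabla v^2)\bigr)\!\cdot\!(\nabla v^1-\nabla v^2)\,\xi_\tau\,dxdt$, which is non-negative by the monotonicity ($\mathcal{A}$\ref{A3}); finally the right-hand side is non-positive since $F\leq 0$, $h_\eta(w)\geq 0$, $\xi_\tau\geq 0$. Combining the three sign observations gives $\iO H_\eta(w^+(\tau,\cdot))\,dx\leq 0$ for a.e.\ $\tau\in(0,T)$. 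Since $H_\eta\geq 0$ and $H_\eta(s)\to s^+$ pointwise, monotone convergence as $\eta\to 0^+$ yields $w^+(\tau,\cdot)\equiv 0$ a.e., proving $v^1\leq v^2$ a.e.\ in $\OT$.

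The principal obstacle is the opening step: transferring the hypotheses of Proposition~\ref{prop:intbyparts} from each $v^i$ to the difference $w$. Since $v^i$ is only known to lie in $L^\infty(0,T;L^1(\Omega))$ with sole truncations in $\VTM$, producing $T_k(w)\in\VTM$ for every $k\geq 0$ is delicate, and this is precisely where the decay condition~\eqref{decay} enters the comparison principle, controlling the contributions from the regions where $v^1$ or $v^2$ is large.
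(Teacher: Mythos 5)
Your Step 1 (deriving the distributional identity for $w=v^1-v^2$ by taking $h=h_m\nearrow 1$ and using \eqref{decay} to kill the $h_m'$--terms) is sound, but the argument then funnels through applying Proposition~\ref{prop:intbyparts} to the quadruple $(w,B,F,w_0)$, and the required hypothesis $T_k(w)\in\VTM$ for every $k$ is a genuine gap that your proposed fix does not close. On the set $\{|v^1-v^2|<k\}$ both $v^1$ and $v^2$ may be arbitrarily large, so $\nabla T_k(w)$ involves the gradients outside every set $\{|v^i|<m\}$, and it is not dominated by $\nabla T_m(v^1)-\nabla T_m(v^2)$ for any fixed $m$. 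The coercivity in ($\mathcal{A}$\ref{A2}) gives $\int_{\OT} M(t,x,\nabla T_m(v^i))\,dx\,dt\le \int_{\{|v^i|<m\}}A(t,x,\nabla v^i)\cdot\nabla v^i\,dx\,dt$, but \eqref{decay} only states that the increments over $\{l<|v^i|<l+1\}$ tend to zero, not that they are summable; these right-hand sides may blow up as $m\to\infty$ (increments of order $1/l$ are compatible with \eqref{decay}). Hence the ``uniform modular bounds'' you invoke are not available, and even if they were, you would still have to pass to the limit in the non-reflexive, non-separable $L_M(\OT;\rn)$ and identify the limit with $\nabla T_k(w)$, which requires an argument you do not supply. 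Note moreover that the hypotheses of Proposition~\ref{prop:intbyparts} do not give $v^i\in L^1(0,T;W^{1,1}_0(\Omega))$ — only the truncations are Sobolev — so $\nabla w$ is defined only through truncations, and the renormalized identity for $w$ that you test in your final step presupposes exactly the membership you could not establish; the final testing step itself (with $h_\eta$, $\xi_\tau$, monotonicity and $F\le 0$) would be fine if that identity were legitimate.

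The paper avoids this obstruction entirely: it never needs truncations of the difference to lie in $\VTM$. Instead it applies \eqref{claim:prop:int-by-parts-1} to each $v^i$ separately, with the renormalization $h=\psi_l$ (Lipschitz, $\psi_l'$ supported in $\{l<|s|<l+1\}$, $\psi_l\to 1$) and the test function $\xi=H_\delta\big(T_{l+1}(v^1)-T_{l+1}(v^2)\big)\,\beta^{\tau,r}(t)$, where $H_\delta$ approximates $\sg$. This $\xi$ is built solely from $T_{l+1}(v^i)\in\VTM\cap L^\infty(\OT)$, which are admissible by hypothesis, so no new regularity of $w$ is required; the sign approximation that you place inside $h$ is placed inside $\xi$, monotonicity of $A$ enters through the gradient term on the shrinking set $\{0<T_{l+1}(v^1)-T_{l+1}(v^2)<\delta\}$, and \eqref{decay} is used only at the very end to annihilate the terms containing $\psi_l'$ as $l\to\infty$, before letting $r\to 0$. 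To salvage your route you would either need additional assumptions (e.g. $\nabla v^i\in L_M(\OT;\rn)$ globally with an $L^1$ bound on $A(t,x,\nabla v^i)\cdot\nabla v^i$) or a restructuring along the paper's lines.
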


\begin{proof} Let us define two-parameter family of functions $\bt:\R\to\R$ (with $\tau\in (0,T)$ and $r>0$ is such that $\tau+r<T$) and one-parameter family of~functions $H_\delta:\R\to\R$ (with $\delta\in(0,1)$) by
\begin{equation*}\begin{array}{ccc}
\bt(s):=\left\{\begin{array}{ll}
1& \text{for }s\in[0,\tau],\\
\frac{-s+\tau+r}{r}& \text{for }s\in[\tau,\tau+r],\\
0& \text{for }s\in[\tau+r,T]
\end{array}\right.&\quad\text{and}\quad&
H_\delta(s)=\left\{\begin{array}{ll}
0,& s\leq 0,\\
s/\delta,& s\in(0,\delta),\\
1,& s\geq \delta.
\end{array}\right.
\end{array}
\end{equation*}
 and sets
\[Q_T^\delta=\{(t,x): 0<T_{l+1}(v^1)-T_{l+1}(v^2)<\delta\},\qquad Q_T^{\delta+}=\{(t,x):  T_{l+1}(v^1)-T_{l+1}(v^2)\geq \delta\}.\]

Using \eqref{claim:prop:int-by-parts-1} with \[\left\{\begin{array}{l}h(v^1)=\psi_l(v^1) \\ \xi=H_\delta(T_{l+1}(v^1)-T_{l+1}(v^2))\bt(t)\end{array}\right.\qquad\text{ and }\qquad \left\{\begin{array}{l}h(v^2)=\psi_l(v^2) \\ \xi=H_\delta(T_{l+1}(v^1)-T_{l+1}(v^2))\bt(t)\end{array}\right.\] and subtract the second from the first we get
 \[\begin{split}
&D^{\delta,r,l,\tau}_1+D^{\delta,r,l,\tau}_2+D^{\delta,r,l,\tau}_3+D^{\delta,r,l,\tau}_4+D^{\delta,r,l,\tau}_5=\\
=&-\int_{\OT} \left(\int_{v^1_0}^{v^2_0} \psi_l(\sigma)d\sigma+\int_{v^2}^{v^1} \psi_l(\sigma)d\sigma\right) \partial_t (H_\delta( T_{l+1}(v^1)-T_{l+1}(v^2))) \bt(t)\, dx\,dt+\\
&+\iO\frac{1}{r}\int_\tau^{\tau+r}  \left(\int_{v^1_0}^{v^2_0} \psi_l(\sigma)d\sigma+\int_{v^2}^{v^1} \psi_l(\sigma)d\sigma\right) H_\delta ( T_{l+1}(v^1)-T_{l+1}(v^2))\,dt\, dx +\\
&+\int_{Q_T^\delta}\frac{1}{\delta}(A(t,x,\nabla v^1)(\psi_l(v^1)-\psi_l(v^2))  \nabla ((T_{l+1}(v^1)-T_{l+1}(v^2))\bt(t)) \,dx\,dt+\\
&+\int_{Q_T^\delta}\frac{1}{\delta}(A(t,x,\nabla v^1)-A(t,x,\nabla v^2))\psi_l(v^2)  \nabla ((T_{l+1}(v^1)-T_{l+1}(v^2))\bt(t)) \,dx\,dt+\\
&+\int_{Q_T^\delta\cup Q_T^{\delta+}} (A(t,x,\nabla v^1)\nabla v^1\psi_l'(v^1)-A(t,x,\nabla v^2)\nabla v^2\psi_l'(v^2))  H_\delta(T_{l+1}(v^1)-T_{l+1}(v^2)) \bt(t) \,dx\,dt=\\
=&\int_{Q_T^\delta\cup Q_T^{\delta+}} (f^1 \psi_l(v^1)-f^2 \psi_l(v^2)) H_\delta(T_{l+1}(v^1)-T_{l+1}(v^2)) \bt(t)\,dx\,dt=D^{\delta,r,l,\tau}_R.
\end{split}\] We observe that
\[\begin{split}|D^{\delta,r,l,\tau}_1|\leq& \int_{Q_T^\delta} \left|\partial_t\left(\int_{v^1_0}^{v^2_0} \psi_l(\sigma)d\sigma+\int_{v^2}^{v^1} \psi_l(\sigma)d\sigma\right)   \frac{1}{\delta}( T_{l+1}(v^1)-T_{l+1}(v^2))  \bt(t)\right| dx\,dt+\\
&+\int_{Q_T^\delta}\left|\left(\int_{v^1_0}^{v^2_0} \psi_l(\sigma)d\sigma+\int_{v^2}^{v^1} \psi_l(\sigma)d\sigma\right)  \frac{1}{\delta}( T_{l+1}(v^1)-T_{l+1}(v^2)) \partial_t\bt(t)\right| dx\,dt=\\
=&\int_{Q_T^\delta} \left| \psi_l(\partial_t (T_{l+1}(v^1))-\psi_l(\partial_t T_{l+1}(v^2)) \right|
 \frac{1}{\delta}\cdot\delta \, dx\,dt+\\
&+\int_{Q_T^\delta}\left| T_{l+1}(v^1_0)-T_{l+1}(v^2_0)+T_{l+1}(v^1)-T_{l+1}(v^2)\right|  \frac{1}{\delta}\cdot\delta \, dx\,dt\\
\leq& (2+4(l+1))|Q_T^\delta|.\end{split}\]
Hence,  the Dominated Convergence Theorem yields that $D^{\delta,r,l,\tau}_1\to 0$ when $\delta\to 0$.
In the case of $D^{\delta,r,l,\tau}_2$, $D^{\delta,r,l,\tau}_5$, and $D^{\delta,r,l,\tau}_R$ it also suffices to apply the Dominated Convergence Theorem. The monotonicity of $A$ implies $D^{\delta,r,l,\tau}_4\geq 0$.  We notice in the case of $D_3^{\delta,r,l,\tau}$ that on $\{|v^1|\geq  l+1\}$ we have $A(t,x,\nabla T_{l+1}(v^1))=0$. Therefore,
\[\begin{split}D_3^{\delta,r,l,\tau}& =\int_{Q_T^\delta\cap \{|v^1|< l+1,\, |v^2|\geq l+1\}}\frac{1}{\delta} A(t,x,\nabla T_{l+1}(v^1))(\psi_l(v^1)-\psi_l(v^2))  \nabla (T_{l+1}(v^1)-T_{l+1}(v^2))\bt(t) \,dx\,dt\\
&+\int_{Q_T^\delta\cap \{|v^1|< l+1,\, |v^2|< l+1\}}\frac{1}{\delta} A(t,x,\nabla T_{l+1}(v^1))(\psi_l(v^1)-\psi_l(v^2))  \nabla (T_{l+1}(v^1)-T_{l+1}(v^2))\bt(t) \,dx\,dt\\
 &=\int_{Q_T^\delta\cap \{|v^1|< l+1,\, |v^2|\geq l+1\}}\frac{1}{\delta} A(t,x,\nabla T_{l+1}(v^1)) \psi_l(v^1) \nabla  T_{l+1}(v^1) \bt(t) \,dx\,dt\\
&+\int_{Q_T^\delta\cap \{|v^1|< l+1,\, |v^2|< l+1\}}\frac{1}{\delta}(A(t,x,\nabla T_{l+1}(v^1))(\psi_l(v^1)-\psi_l(v^2))  \nabla (T_{l+1}(v^1)-T_{l+1}(v^2))\bt(t) \,dx\,dt,\end{split}\]
where the first term on the right-hand side is clearly nonnegative, while the second is tending to zero as $\delta\to 0$. Indeed, notice that $\psi_l$ is a Lipschitz function (with a Lipschitz constant $1$) and that over this set $T_{l+1}(v^1)=v^1$ and $T_{l+1}(v^2)=v^2$, and thus
 \[\begin{split}
&\left|\int_{Q_T^\delta\cap \{|v^1|< l+1,\, |v^2|< l+1\}}\frac{1}{\delta} A(t,x,\nabla T_{l+1}(v^1))(\psi_l(v^1)-\psi_l(v^2))  \nabla  (T_{l+1}(v^1)-T_{l+1}(v^2))\bt(t)  \,dx\,dt\right|\\
&\leq \int_{Q_T^\delta\cap \{|v^1|< l+1,\, |v^2|< l+1\}}\frac{1}{\delta}|A(t,x,\nabla  v^1 )|\cdot | v^1 - v^2 |\cdot|\nabla \big( v^1 - v^2 \big)|  \,dx\,dt\\
&\leq \int_{Q_T^\delta\cap \{|v^1|< l+1,\, |v^2|< l+1\}} |A(t,x,\nabla v^1)|\cdot |\nabla \big( v^1 - v^2 \big)| \,dx\,dt.
\end{split}\]
Therefore it suffices to notice that we integrate above an $L^1$-function over a shrinking domain. In turn, $\lim_{\delta\to 0}D^{\delta,r,l,\tau}_3\geq 0$.

We erase nonnegative terms on the left-hand side and pass to the limit with $\delta\to 0$ in the remaining ones, getting
 \[\begin{split}
&\lim_{\delta\to 0}D^{\delta,r,l,\tau}_2+\lim_{\delta\to 0}D^{\delta,r,l,\tau}_5=D^{ r,l,\tau}_2+ D^{ r,l,\tau}_5=\\
=&\iO\frac{1}{r}\int_\tau^{\tau+r}  \left(\int_{v^1_0}^{v^2_0} \psi_l(\sigma)d\sigma+\int_{v^2}^{v^1} \psi_l(\sigma)d\sigma\right) \sg ( T_{l+1}(v^1)-T_{l+1}(v^2))\,dt\, dx +\\
&+\iOT (A(t,x,\nabla v^1)\nabla v^1\psi_l'(v^1)-A(t,x,\nabla v^2)\nabla v^2\psi_l'(v^2))  \sg(T_{l+1}(v^1)-T_{l+1}(v^2)) \bt(t) \,dx\,dt\leq\\
\leq&\iOT (f^1 \psi_l(v^1)-f^2 \psi_l(v^2)) \sg(T_{l+1}(v^1)-T_{l+1}(v^2)) \,dx\,dt=\lim_{\delta\to 0}D^{\delta,r,l,\tau}_R.
\end{split}\]

What is more, due to~\eqref{decay} and uniform boundedness of the rest expression in $D^{r,l,\tau}_5$, we infer that $\lim_{l\to\infty}D^{r,l,\tau}_5=0.$ The Monotone Convergence Theorem enables to pass with $l\to\infty$ also in $D^{r,l,\tau}_2$ and $D^{r,l,\tau}_5$. Consequently, we obtain
 \[ \iO\frac{1}{r}\int_\tau^{\tau+r}  \left( {v^2_0}-{v^1_0} +{v^1}-{v^2} \right) \sg ( v^1-v^2)\,dt\, dx \leq \iOT (f^1  -f^2  ) \sg( v^1 -v^2) \,dx\,dt.\]
 Since a.e. $\tau\in[0,T)$ is the Lebesgue point of the integrand on the left-hand side and we can pass with $r\to 0$. After rearranging terms it results in
\[\begin{split}&\iO(v^1(\tau,x)-v^2(\tau,x))\sg(v^1(\tau,x)-v^2(\tau,x))dx\leq\\
&\leq\iOT(f^1(t,x)-f^2(t,x))\sg(v^1(\tau,x)-v^2(\tau,x))dx\,dt\\
&\quad +\iO(v_0^1(\tau,x)-v_0^2(\tau,x))\sg(v^1(\tau,x)-v^2(\tau,x))dx.\end{split} \]
for a.e. $\tau\in (0,T)$. Note that the left-hand side is nonnegative. Since $f^2\geq f^1$ and $v_0^2\geq v_0^1$, the right-hand side is nonpositive. Hence,  $\sg(v^1(\tau,x)-v^2(\tau,x))=0$ for a.e. $\tau\in (0,T)$ and~consequently $v^1 \leq v^2 $ a.e. in~$\OT$.\end{proof}

\subsection{Existence of a weak solution for the regularized problem}

 We apply the result of~\cite{ElMes} providing existence to a problem in the isotropic Orlicz-Sobolev setting (with the modular function depending on the norm of the gradient of solution only).  To avoid introducing overwhelming notation, we give here only direct simplification of \cite[Theorem~2]{ElMes} to our situation. It reads as follows.
 
 \begin{coro}\label{coro:ElMes} Let $\Omega$ be a bounded open domain in $\rn$ and $m:\rn\to\R$ be a radially summetric (isotropic) function, i.e. $m(\xi)=\overline{m}(|\xi|)$ with some  $\overline{m}:\R\to\R$. Denote an Orlicz space $W^1 L_{\overline{m}}(\OT)=\{u\in L_{\overline{m}}(\OT):\ u\in L_{\overline{m}}(\OT)\}.$ We consider a Carath\'{e}odory function $a:\Omega\times \rn\to\rn$, which is strictly monotone, i.e.
\begin{equation}\label{(9)ElMes}
\forall_{\xi_1,\xi_2\in\rn}\qquad\Big(a(x,\xi_1)-a(x,\xi_2)\Big)(\xi_1-\xi_2)>0
\end{equation}
and satisfies growth and coercivity conditions
\begin{equation}\label{(10)i(7)ElMes}
\exists_{c_0,c_1,c_2>0}\ \forall_{(x,\xi)\in \Omega\times \rn}\qquad a(x,\xi)\xi\geq c_0 m(\xi)\qquad\text{and}\qquad |a(x,\xi)|\leq c_1 (m^*)^{-1}\big(m(c_2\xi)\big).
\end{equation}  

Then the problem \begin{equation}\label{eq:ElMes}
\left\{\begin{array}{ll}
\partial_t v-\dv\, a(x,\nabla v)= g\in L^\infty(\OT) & \ \mathrm{ in}\  \OT,\\
v(t,x)=0 &\ \mathrm{  on} \ (0,T)\times\partial\Omega,\\
v(0,\cdot)=v_{0}(\cdot)\in L^2(\Omega) & \ \mathrm{ in}\  \Omega.
\end{array}\right.
\end{equation}
with  has at least one weak solution $v\in W^1 L_{\overline{m}}(\OT)\cap C([0,T],L^2(\Omega)).$

Moreover, the energy equality is satisfied, i.e.
\begin{equation}
\label{en:eq:ElMes}
\frac{1}{2}\iO (v(\tau))^2\, dx-
\frac{1}{2}\iO (v_{0} )^2\, dx+\int_{\Omega_\tau} a(x,\nabla v)\cdot \nabla v\, dx\, dt=  \int_{\Omega_\tau} g v\, dx\, dt\qquad\forall_{\tau\in[0,T].}\end{equation}
 \end{coro}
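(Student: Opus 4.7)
Since Corollary~\ref{coro:ElMes} is stated as a direct simplification of \cite[Theorem~2]{ElMes}, the proof amounts to checking that our specialized hypotheses fit the framework there and then transferring the conclusion. I would start by observing that, because $m(\xi)=\overline{m}(|\xi|)$ is radial, the modular function is a classical isotropic $N$-function on $\rn$, so the Orlicz--Sobolev space $W^1L_{\overline{m}}(\OT)$ coincides with the one built in \cite{ElMes}. The strict monotonicity \eqref{(9)ElMes} is precisely the hypothesis of \cite{ElMes}, and the growth/coercivity pair \eqref{(10)i(7)ElMes} matches the two-sided bound there (after absorbing the constants $c_0,c_1,c_2$ in the standard way). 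The data $g\in L^\infty(\OT)$ and $v_0\in L^2(\Omega)$ are strictly more regular than what is required in \cite{ElMes}, so existence of a weak solution $v\in W^1L_{\overline{m}}(\OT)\cap C([0,T],L^2(\Omega))$ follows verbatim from that theorem; no additional argument is needed.

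For the energy equality \eqref{en:eq:ElMes}, my plan is to use $v$ itself (after an appropriate regularization in time) as a test function in the weak formulation of \eqref{eq:ElMes}. Concretely, I would apply Proposition~\ref{prop:time-app} to obtain mollifications $v_\et$ with $\nabla v_\et\xrightarrow{M}\nabla v$ in $L_{\overline{m}}(\OT;\rn)$ and $v_\et\to v$ strongly in $L^2(\OT)$. Testing with $v_\et\cdot\chi_{[0,\tau]}$ (more precisely, a smooth-in-time cut-off approximating $\chi_{[0,\tau]}$), the parabolic term produces $\tfrac12\|v(\tau)\|_{L^2}^2-\tfrac12\|v_0\|_{L^2}^2$ in the limit via the usual Steklov-type manipulation that exploits the continuity $v\in C([0,T],L^2(\Omega))$ (which is itself part of the conclusion of \cite[Theorem~2]{ElMes}). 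The source term passes to the limit by dominated convergence, since $g\in L^\infty$.

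The only delicate step is the passage to the limit in the dual pairing $\int_{\Omega_\tau}a(x,\nabla v)\cdot\nabla v_\et\,dx\,dt\longrightarrow \int_{\Omega_\tau}a(x,\nabla v)\cdot\nabla v\,dx\,dt$, because $L_{\overline{m}^*}$ is generally not reflexive and one cannot simply invoke weak convergence in a dual pair. The coercivity/growth condition \eqref{(10)i(7)ElMes} guarantees that $a(\cdot,\nabla v)\in L_{\overline{m}^*}(\OT;\rn)$; combined with the modular convergence $\nabla v_\et\xrightarrow{M}\nabla v$, the uniform integrability of $\{\overline{m}(|\nabla v_\et|/\lambda)\}_\et$ given by the definition of modular convergence, and the Fenchel--Young inequality, Vitali's theorem yields the desired convergence of the dual pairing. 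This is the main technical hurdle, but it is handled in exactly the same way as in the standard Orlicz parabolic theory of Gossez-Mustonen type; the isotropy of $\overline{m}$ and the fact that the coefficients do not depend on~$t$ make it routine in this case.

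Putting these three ingredients together -- existence from \cite{ElMes}, time regularization via Proposition~\ref{prop:time-app}, and the Vitali-type passage in the dual pairing -- yields both the existence of a weak solution and the energy equality \eqref{en:eq:ElMes} for every $\tau\in[0,T]$.
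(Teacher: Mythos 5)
Your proposal and the paper agree on the essential point: Corollary~\ref{coro:ElMes} is not proved in the paper at all, but is presented as a verbatim specialization of \cite[Theorem~2]{ElMes}, and this citation is understood to cover the energy equality \eqref{en:eq:ElMes} as well, not only the existence statement. So the first half of your argument (checking that the radial structure of $m$, the strict monotonicity \eqref{(9)ElMes}, the growth/coercivity pair \eqref{(10)i(7)ElMes}, and the regularity of the data place the problem \eqref{eq:ElMes} squarely inside the framework of \cite{ElMes}) is exactly what the authors do, and nothing more is needed. Where you diverge is in re-deriving \eqref{en:eq:ElMes} by hand via time mollification and a Vitali argument; the paper simply imports it. Your sketch follows standard Orlicz-parabolic lines and the Vitali step is fine (modular convergence of $\nabla v_\et$ tested against a fixed element of $L_{\overline{m}^*}$ does pass to the limit by Fenchel--Young plus uniform integrability), but two points are glossed over. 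First, admissibility of $v_\et\chi_{[0,\tau]}$ as a test function is not granted by Proposition~\ref{prop:time-app} alone: the weak formulation is a priori available only for smooth compactly supported test functions, so one also needs an approximation in space (Theorem~\ref{theo:approx-sp}, which requires a Lipschitz boundary, whereas the corollary assumes only a bounded open $\Omega$; in \cite{ElMes} the segment property plays this role) or an integration-by-parts formula of the type of Proposition~\ref{prop:intbyparts}. Second, $a(\cdot,\nabla v)\in L_{\overline{m}^*}(\OT;\rn)$ does not follow from $\nabla v\in L_{\overline{m}}(\OT;\rn)$ and \eqref{(10)i(7)ElMes} alone when $\overline{m}\notin\Delta_2$, because membership in the Orlicz space only gives modular integrability of $\overline{m}(\lambda|\nabla v|)$ for some $\lambda$ which may be smaller than $c_2$; one has to use the a priori bound $\int a(x,\nabla v)\cdot\nabla v<\infty$ coming from the construction in \cite{ElMes}. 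Since the paper takes the whole statement, energy equality included, from the cited theorem, these gaps do not affect the correctness of the corollary as used in the paper, but they would need to be filled if one insisted on an independent proof along your route.
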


The application of the above result gives the following proposition yields the existence of solutions to a regularized problem.

The following proposition yields the existence of solutions to a regularized problem.
\begin{prop}\label{prop:reg-bound}
Let a locally integrable $N$-function $M$ satisfy assumption ($\mathcal{M}$) (resp.~($\mathcal{M}_p$)) and function $A$ satisfy assumptions ($\mathcal{A}$\ref{A1})--($\mathcal{A}$\ref{A3}). Assume that $m$ is an~isotropic function growing essentially more rapidly than $M$ and $m(|\xi|)\geq M(\xi)$ for all $\xi\in\rn$. We consider a regularized operator given by
\begin{equation}\label{Atheta}
A_\theta(t,x,\xi):=A(t,x,\xi)+\theta \bn m(\xi)\qquad \forall_{x\in\rn,\,\xi\in\rn}.
\end{equation}

Let $f\in L^\infty(\Omega_T)$ and $u_0\in L^2(\Omega)$. Then for every $\theta\in(0,1]$ there exists a weak solution to the problem \begin{equation}\label{eq:reg-bound}
\left\{\begin{array}{ll}
\partial_t u^\theta-\dv A_\theta(t,x,\nabla u^\theta)= f & \ \mathrm{ in}\  \OT,\\
u^\theta(t,x)=0 &\ \mathrm{  on} \ (0,T)\times\partial\Omega,\\
u^\theta(0,\cdot)=u_{0}(\cdot) & \ \mathrm{ in}\  \Omega.
\end{array}\right.
\end{equation} Namely, there exists $u^\theta \in C([0,T];L^2(\Omega))\cap L^1(0,T; W^{1,1}_{0}(\Omega))$, such that
\begin{equation}\begin{split}\label{weak-reg-bound}
&-\iOT u^\theta\partial_t \vp dx\, dt+\iO u^\theta(T)\vp(T)\, dx- \iO u^\theta(0)\vp(0)\, dx+\iOT A_\theta(t,x,\nabla u^\theta)\cdot \nabla \vp\, dx\, dt  = \iOT f\vp\, dx\, dt
\end{split}\end{equation}
holds for $\vp\in C^\infty([0,T];C_c^\infty(\Omega))$.

\medskip

 Furthermore, \begin{itemize}\item the family $\big(u^\theta\big)_\theta$ is uniformly bounded in $L^\infty(0,T;L^2(\Omega))$,\\
\item the family $\big(\nabla u^\theta\big)_\theta$ is uniformly integrable in $ L_M(\Omega_T;\rn)$,\\
\item the family $\big(A(t,x,\nabla u^\theta)\big)_\theta$ is uniformly bounded in $ L_{M^*}(\Omega_T;\rn)$,\\
\item the family and $\big(\theta m^*(\bar\nabla m(|{\nabla} u ^\theta|))_\theta$ is uniformly bounded in $ L^1(\Omega_T)$.\end{itemize} 

\medskip

Moreover, the following energy equality is satisfied
\begin{equation}
\label{en:eq}
\frac{1}{2}\iO (u^\theta(\tau))^2\, dx-
\frac{1}{2}\iO (u_{0} )^2\, dx+\int_{\Omega_\tau} A_\theta(t,x,\nabla u^\theta)\cdot \nabla u ^\theta\, dx\, dt=  \int_{\Omega_\tau} f u^\theta\, dx\, dt, \qquad \tau\in[0,T].
\end{equation}
\end{prop}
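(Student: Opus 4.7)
The plan is to deduce the existence of $u^\theta$ directly from Corollary~\ref{coro:ElMes} applied to the regularized operator $A_\theta$ within the isotropic Orlicz setting generated by $m$, and then to extract all the uniform estimates by testing the energy equality~\eqref{en:eq:ElMes} with $u^\theta$ itself. Since $m$ is isotropic, dominates $M$ pointwise, and grows essentially more rapidly than $M$, it is the natural reference function for the elliptic regularization, and $L_m(\OT;\rn)\subset E_M(\OT;\rn)$ by~\eqref{LminEM}.

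To invoke Corollary~\ref{coro:ElMes} I need to verify strict monotonicity~\eqref{(9)ElMes} and the growth/coercivity pair~\eqref{(10)i(7)ElMes} for $A_\theta$. Strict monotonicity follows by combining the weak monotonicity~$(\mathcal{A}\ref{A3})$ of $A$ with the strict monotonicity of $\bn m$, which in turn comes from strict convexity of the $N$-function $m$. Coercivity with constant $\theta$ with respect to $m$ is immediate from $(\mathcal{A}\ref{A2})$ and the Fenchel--Young equality case for $m$, namely
\[
A_\theta(t,x,\xi)\cdot\xi\;\geq\;M(t,x,\xi)+\theta\,\bn m(\xi)\cdot\xi\;\geq\;\theta\, m(\xi).
\]
For the growth bound, the standard Orlicz estimate $|\bn m(\xi)|\leq (m^*)^{-1}(m(2\xi))$ controls the regularizing term, while $(\mathcal{A}\ref{A2})$ gives $c_A\,M^*(t,x,A(t,x,\xi))\leq M(t,x,\xi)\leq m(\xi)$, so Remark~\ref{rem:f*<g*} together with $M\leq m$ yields, after rescaling, $|A(t,x,\xi)|\leq c_1(m^*)^{-1}(m(c_2\xi))$. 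Corollary~\ref{coro:ElMes} then provides $u^\theta\in C([0,T];L^2(\Omega))\cap W^1 L_{\bar m}(\OT)$ satisfying~\eqref{weak-reg-bound} together with the energy equality~\eqref{en:eq}.

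Testing~\eqref{en:eq} at arbitrary $\tau\in[0,T]$, using $A_\theta\cdot\nabla u^\theta\geq M(t,x,\nabla u^\theta)+\theta\,\bn m(\nabla u^\theta)\cdot\nabla u^\theta$ on the left, and absorbing $\int_{\Omega_\tau}f u^\theta$ on the right via Young's inequality (together with $f\in L^\infty$ and $u_0\in L^2$), yields, uniformly in $\theta\in(0,1]$,
\[
\tfrac{1}{2}\iO |u^\theta(\tau)|^2\,dx+\int_{\Omega_\tau}M(t,x,\nabla u^\theta)\,dx\,dt+\theta\int_{\Omega_\tau}\bn m(\nabla u^\theta)\cdot\nabla u^\theta\,dx\,dt\;\leq\;C.
\]
Taking $\sup_\tau$ furnishes the $L^\infty(0,T;L^2(\Omega))$ bound on $u^\theta$ and $\int_{\OT}M(t,x,\nabla u^\theta)\leq C$, from which uniform integrability of $\{\nabla u^\theta\}$ in $L_M$ follows by de la Vall\'ee Poussin. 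The pointwise estimate in $(\mathcal{A}\ref{A2})$ then gives $c_A\int_{\OT}M^*(t,x,A(\cdot,\cdot,\nabla u^\theta))\leq C$, i.e. boundedness of $A(\cdot,\cdot,\nabla u^\theta)$ in $L_{M^*}$; and the Fenchel--Young equality case $\bn m(\xi)\cdot\xi=m(\xi)+m^*(\bn m(\xi))$ yields
\[
\theta\int_{\OT}m^*(\bn m(\nabla u^\theta))\,dx\,dt=\theta\int_{\OT}\bn m(\nabla u^\theta)\cdot\nabla u^\theta\,dx\,dt-\theta\int_{\OT}m(\nabla u^\theta)\,dx\,dt\;\leq\;C.
\]
The principal technical point is the growth verification for $A$ in the first step: one has to ensure that the pointwise domination $M\leq m$ survives the passage to Young conjugates and produces an Orlicz-type estimate on $A$ by a function of $m$, which is precisely where the hypothesis that $m$ grows essentially more rapidly than $M$ and exceeds it pointwise is used. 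A minor additional point is that Corollary~\ref{coro:ElMes} is stated with spatial dependence only, whereas $A_\theta$ is Carath\'eodory in $(t,x)$; we apply it in the natural extended form covered by the underlying reference~\cite{ElMes}.
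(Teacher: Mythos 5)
Your proposal follows essentially the same route as the paper's proof: existence is obtained from Corollary~\ref{coro:ElMes} applied to $A_\theta$ in the isotropic Orlicz setting generated by $m$ (strict monotonicity from ($\mathcal{A}$3) together with monotonicity of $\bn m$; coercivity $A_\theta\cdot\xi\ge\theta m(\xi)$; the growth bound by conjugating $M\le m$ via Remark~\ref{rem:f*<g*} and the Fenchel--Young equality $\bn m(\xi)\cdot\xi=m(\xi)+m^*(\bn m(\xi))$ -- the paper runs this as one convexity estimate on $\overline{m}^*\big(\tfrac12|A_\theta|\big)$, you split $A$ and $\theta\bn m$, which is equivalent), and all uniform bounds then come from the energy equality exactly as in the paper, including the $L_{M^*}$ bound from ($\mathcal{A}$2) and the $L^1$ bound on $\theta m^*(\bn m(\nabla u^\theta))$; your remark on the $(t,x)$-dependence in Corollary~\ref{coro:ElMes} matches the paper's own use of~\cite{ElMes}.

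The one step that does not work as literally written is the treatment of the source term: Young's inequality gives
\begin{equation*}
\int_{\Omega_\tau} f\,u^\theta\,dx\,dt\;\le\;C(f,\Omega,T)+\tfrac12\int_0^\tau\big\|u^\theta(t)\big\|_{L^2(\Omega)}^2\,dt,
\end{equation*}
and this time-integrated quantity cannot be absorbed directly into $\tfrac12\|u^\theta(\tau)\|_{L^2(\Omega)}^2$ on the left-hand side, so ``absorbing via Young's inequality'' is not yet an argument. Two standard repairs are available: a Gr\"onwall argument (legitimate here, since $u^\theta\in C([0,T];L^2(\Omega))$ and the energy equality holds for every $\tau$), or the paper's route, namely the Fenchel--Young inequality with an auxiliary isotropic $N$-function $B$ chosen so that $B(s)\le\frac{1}{2c_P^2}\inf_{(t,x)\in\OT,\,|\xi|=s}M(t,x,\xi)$, followed by the modular Poincar\'e inequality (Theorem~\ref{theo:Poincare}), which lets the term be absorbed into $\frac12\int_{\Omega_\tau}M(t,x,\nabla u^\theta)\,dx\,dt$. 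With either repair, the estimates you list (the $L^\infty(0,T;L^2(\Omega))$ bound, the modular bound on $\nabla u^\theta$ and hence its uniform integrability in $L_M$ -- which the paper gets from Lemma~\ref{lem:unif} rather than de la Vall\'ee Poussin, the same mechanism -- and the remaining two bounds) follow exactly as claimed.
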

\begin{proof} To get existence we apply Corollary~\ref{coro:ElMes}, whereas a priori estimates results from the analysis of the structure of regularization. 

\medskip

\textbf{Existence.}  Recall that we use notation $\nabla$ for a gradient with respect to the spacial variable. Let us introduce also notation $\bn:=\nabla_\xi$. Using it  $\bn m(\xi) =\nabla_\xi \overline{m}(|\xi|)=\xi\overline{m}'(|\xi|)/|\xi|$. Observe that it gives equality in the Fenchel--Young inequality in the following way
\begin{equation}\label{FYeq}
\bn m(\xi)\cdot\xi = \overline{m}( |\xi|)+\overline{m}^*(|\bn m(\xi)|).
\end{equation} Since we take an $N$-function $m$ which grows essentially more rapidly than $M$ we observe that  $m$ is strictly monotone as a gradient of a strictly convex function, i.e.
\begin{equation}\label{msmon}
(\bn m(\xi)-\bn m(\eta))(\xi-\eta)>0\qquad \forall_{\xi,\eta\in\rn}.
\end{equation}

 To apply Corollary~\ref{coro:ElMes} we shall show~\eqref{(10)i(7)ElMes}. The coercivity condition results directly from ({\cal A}2). The bound on growth follows from the Fenchel-Young inequality~\eqref{inq:F-Y},~\eqref{FYeq}, and ({\cal A}3)
 \[  c_A \overline{m}^*\left(\frac{1}{2} \left|A_\theta(x,\xi)\right| \right)\leq \overline{m}\left(\left|\frac{2}{c_A}\xi\right|\right) \]
and further, by convexity of $\overline{m}^*$,
\[ \left|A_\theta(x,\xi)\right|  \leq 2(\overline{m}^*)^{-1}\left(\frac{1}{c_A} \overline{m}\left(\left|\frac{2}{c_A}\xi\right|\right)\right)\leq \frac{2}{c_A}(\overline{m}^*)^{-1}\left( \overline{m}\left(\left|\frac{2}{c_A}\xi\right|\right)\right). \]
Therefore, Corollary~\ref{coro:ElMes} (coming from \cite[Theorem~2]{ElMes})

It suffices to show 
\begin{equation*}\overline{m}^*(|c_1 A_\theta(t,x,\xi)|)\leq \overline{m}(|c_2 \xi|),
\end{equation*}
which follows from equality~\eqref{FYeq} in the Fenchel-Young inequality~\eqref{inq:F-Y} and ($\mathcal{A}3$) where $c_A,\theta\in(0,1]$. We have
\[A_\theta(t,x,\xi)\cdot \xi\leq \overline{m}\left(\left|\frac{2}{c_A}\xi\right|\right)+\overline{m}^*\left(\left|\frac{c_A}{2}A_\theta(t,x,\xi)\right|\right) \leq \overline{m}\left(\left|\frac{2}{c_A}\xi\right|\right)+c_A \overline{m}^*\left(\left|\frac{1}{2}A_\theta(t,x,\xi)\right|\right),\] but on the other hand
 \begin{equation*}
 \begin{split}
 A_\theta(t,x,\xi)\cdot \xi &\geq  M\left(t,x,\xi\right)+\theta \overline{m}\left(\left|\xi\right|\right)+\theta \overline{m}^*\left(\left|\bn m(\xi)\right|\right)\\
 &\geq c_A M^*\left(t,x,A(t,x,\xi)\right)+0 +\theta \overline{m}^*\left(\left|\bn m(\xi)\right|\right)\\
 &\geq 2 c_A\left(\frac{1}{2}\overline{m}^*\left(\left|A(t,x,\xi)\right|\right) +\frac{1}{2}\overline{m}^*\left(\left|\theta\bn m(\xi)\right|\right) \right) \geq 2 c_A \overline{m}^*\left(\frac{1}{2} \left|A_\theta(t,x,\xi)\right| \right).
 \end{split}
 \end{equation*}
 Therefore, we get
$  c_A \overline{m}^*\left(\frac{1}{2} \left|A_\theta(t,x,\xi)\right| \right)\leq \overline{m}\left(\left|\frac{2}{c_A}\xi\right|\right) $. Then by strict monotonicity of $\overline{m}^*$,
\[ \left|A_\theta(t,x,\xi)\right|  \leq \frac{2}{c_A}(\overline{m}^*)^{-1}\left( \overline{m}\left(\left|\frac{2}{c_A}\xi\right|\right)\right)  \]
and Corollary~\ref{coro:ElMes}, gives the claim, i.e. existence of a solution $u^\theta \in C([0,T];L^2(\Omega))\cap L^1(0,T; W^{1,1}_{0}(\Omega))$ with $\nabla u^\theta\in L_m(\Omega_T;\rn)$.

\bigskip

Now, we shall show uniform boundedness of $u^\theta$.

\medskip

\textbf{A priori estimates.} By energy equality~\eqref{en:eq}, (A2) and~\eqref{FYeq} we get
\begin{equation}
\label{en:eq:imp}
\begin{split} \frac{1}{2}\iO (u^\theta(\tau))^2\, dx+
 \int_{\Omega_\tau} M(t,x,\nabla u^\theta)\, dx\, dt +\int_{\Omega_\tau}\theta m( \nabla u^\theta )+\theta m^*(\bar{\nabla} m(  {\nabla} u^\theta ))\,dx\,dt \\
 \leq  \int_{\Omega_\tau} f u^\theta\, dx\, dt+\frac{1}{2} \iO (u_{0})^2dx.\end{split}
\end{equation}
We estimate further the right-hand side using the Fenchel-Young inequality~\eqref{inq:F-Y} and the~modular Poincar\'{e} inequality (Theorem~\ref{theo:Poincare}). For this let us consider any $N$-function $B:[0,\infty)\to[0,\infty)$ such that \[B(s)\leq \frac{1}{2c^2_P}\inf_{(t,x)\in\OT,\,\xi:\,|\xi|=s}M(t,x,\xi),\] where $c^2_P$ is the constant from the modular Poincar\'{e} inequality for $B$. Then on the right-hand side of~\eqref{en:eq:imp} we have
\[\begin{split}
\int_{\Omega_\tau} f u^\theta\, dx\, dt&\leq \int_{\Omega_\tau} B^*(\|f\|_{L^\infty(\Omega)}/c_1^P)\, dx\, dt+\int_{\Omega_\tau} B(c_1^P|u^\theta|)\, dx\, dt \\
&\leq |\Omega_T| B^*(\|f\|_{L^\infty(\Omega)}/c_1^P)+c_P^2\int_{\Omega_\tau} B(|\nabla u^\theta|)\, dx\, dt\\
&\leq c(\Omega,T,f,N)+\frac{1}{2}\int_{\Omega_\tau} M(t,x,\nabla u^\theta )\, dx\, dt.
\end{split}\]
Consequently, we infer that~\eqref{en:eq:imp} implies  \begin{equation*}
\begin{split} \frac{1}{2}\iO (u^\theta(\tau))^2\, dx+
\int_{\Omega_\tau} \frac{1}{2} M(t,x,\nabla u^\theta)\, dx\, dt+ \int_{\Omega_\tau}\theta m( \nabla u^\theta )+\theta m^*(\bar\nabla m( {\nabla} u^\theta ))\,dx\,dt\\ \qquad\qquad\leq  c(\Omega,T,f,N)+\frac{1}{2} \|u_{0}\|_{L^2(\Omega)} \leq c(\Omega,T,f,N,\|u_{0}\|_{L^2(\Omega)} )=\wt{C}.\end{split}
\end{equation*}
When we take into account that $\tau$ is arbitrary, this observation leads to a priori estimates
\begin{eqnarray}
&\sup_{\tau \in[0,T]}\|u ^\theta(\tau)\|^2_{L^2(\Omega)}&\leq \wt{C},\label{apriori:utL2} \\
& \int_{\OT}  M(t,x,\nabla u ^\theta)\,dx\,dt&\leq 2\wt{C},\label{apriori:Mt}\\
&\int_{\OT}  \theta m^*(\bar\nabla m(|{\nabla} u ^\theta|))\,dx\,dt&\leq \wt{C}.\label{apriori:mst}
\end{eqnarray}
Moreover, (${\cal A}$\ref{A2}) implies then
\begin{equation}
c_A\int_{\OT} M^*(t,x, A (t,x,\nabla u ^\theta))\,dx\,dt \leq 2\wt{C}.\label{apriori:Mst}
\end{equation}
And thus, the uniform boundednesses of the claim follow.
\end{proof}

\section{The main proof}

We prove the existence of a weak solution for non-regularized problem with bounded data by passing to the limit with $\theta\to 0$ in the regularized problem~\eqref{eq:reg-bound}. Note that to get weak solutions, we exploit the integration-by-parts-formula from Proposition~\ref{prop:intbyparts}. In particular, we require regularity of $M$ necessary in approximation used in Proposition~\ref{prop:intbyparts}.

\begin{proof}[Proof of Theorem~\ref{theo:bound}] We apply Proposition~\ref{prop:reg-bound} and  let $\theta\to 0$. Uniform estimates provided therein imply that there exist a subsequence of $\theta\to 0$, such that
\begin{eqnarray}
\label{conv:untLi}  u^\theta\xrightharpoonup* u\quad& \text{weakly-* in }& L^\infty(0,T;L^2(\Omega)),\\
\label{limDut}\nabla u^\theta \xrightharpoonup* \nabla u\quad& \text{weakly-* in }& L_M(\OT;\rn),
\end{eqnarray}
with some $u\in\VTMi$ and there exists $\alpha\in L_{M^*}(\OT;\rn)$, such that
\begin{equation}
\label{limaDut}A(\cdot,\cdot,\nabla u^\theta) \xrightharpoonup* \alpha\quad \text{weakly-* in } L_{M^*}(\OT;\rn).\end{equation}

\medskip

\textbf{Identification of the limit $\alpha$. Uniform estimates.} We need to show
\begin{equation}\label{limsup<aln}\limsup_{\theta\to 0}\iOT A(t,x,\nabla u^\theta)\cdot\nabla u ^\theta\,dx\,dt\leq \iOT \alpha \nabla u \,dx\,dt.\end{equation}
We are going to pass with $\theta\searrow 0$ in the regularized problem~\eqref{weak-reg-bound} and~\eqref{en:eq}. In the first term on the left-hand side therein, due to~\eqref{conv:untLi}, we have
\begin{equation*}
\lim_{\theta\searrow 0} \iOT u^\theta\partial_t \vp\, dx\, dt=\iOT u \partial_t \vp\, dx\, dt.
\end{equation*}
Moreover, we prove that
\begin{equation*}
\lim_{\theta\searrow 0}\iOT\theta\bar{\nabla}m(\nabla u^\theta)\cdot\nabla \vp\,dx\,dt=0.
\end{equation*}
   To get this, we split $\OT$ into $\Omega^\theta_{T,R}=\{(t,x)\in \OT:|\nabla u^\theta|\leq R\}$ and its complement and consider the following integrals separately
\[\iOT\theta\bar{\nabla}m(\nabla u^\theta)\cdot\nabla \vp\,dx\,dt=\int_{\Omega^\theta_{T,R}}\theta\bar{\nabla}m(\nabla u^\theta)\cdot\nabla \vp\,dx\,dt+\int_{\OT\setminus\Omega^\theta_{T,R}}\theta\bar{\nabla}m(\nabla u^\theta)\cdot\nabla \vp\,dx\,dt.\]
To deal with the first term on the right-hand side above, we use continuity of $\bar{\nabla} m$ to obtain
\[\lim_{\theta\searrow 0}\int_{\Omega^\theta_{T,R}}\theta\bar{\nabla}m(\nabla u^\theta)\cdot\nabla \vp\,dx\,dt\leq \lim_{\theta\searrow 0} \left(\theta|\OT|\cdot\|\nabla \vp\|_{L^\infty(\OT;\rn)}\sup_{\xi:\,|\xi|\leq R}|\bar{\nabla}m(\xi)|\right)=0.\]

As for the integral over $\OT\setminus\Omega^\theta_{T,R}$,  a priori estimate~\eqref{apriori:Mt} implies that the sequence $\{\nabla u ^\theta\}_\theta$ is uniformly bounded in $L^1(\OT)$ and thus
\begin{equation}
\label{qtrc:meas}\sup_{0\leq\theta\leq 1}|\OT\setminus\Omega^\theta_{T,R}|\leq  {C}/{R}.
\end{equation}
Furthermore, since $m^*$ is an $N$-function, for $\theta\in(0,1)$ we have $m^*(\theta\cdot)\leq\theta m^*(\cdot).$ This together with  $L^1(\OT)$-bound~\eqref{apriori:mst} for
$ \theta m^*(\bar\nabla m( {\nabla} u^\theta ))$, which is  uniform with respect to $\theta$, we get $L^1(\OT)$-bound  for
$ \{ m^*(\theta\bar\nabla m( {\nabla} u^\theta ))\}_\theta$. Therefore, Lemma~\ref{lem:unif} implies the uniform integrability of $\{\theta  \bar\nabla m( {\nabla} u^\theta)\}_\theta$. Then, using~\eqref{qtrc:meas}, we obtain
\[
\int_{\OT\setminus\Omega^\theta_{T,R}}\theta\bar{\nabla}m(\nabla u^\theta)\cdot\nabla \vp\,dx\,dt\leq \|\nabla \vp\|_{L^\infty(\OT;\rn)} \sup_{\theta\in (0,1)}\int_{\OT\setminus\Omega^\theta_{T,R}}\theta|\bar{\nabla}m(\nabla u^\theta)|\,dx\,dt\xrightarrow[R\to\infty]{} 0.
\]

Therefore,  we can pass to the limit in the weak formulation of the regularized problem~\eqref{weak-reg-bound}. Because of~\eqref{limaDut} 
 we  obtain
\begin{equation}
\label{lim:theta=0}-\iOT u \partial_t \vp dx\, dt- \iO u_{0}\vp(0)\, dx+\iOT \alpha\cdot \nabla  \vp\, dx= \iOT f\vp\, dx\, dt \quad\forall_{\vp\in C_c^\infty([0;T)\times \Omega)}.
\end{equation}
We want to use here a test function involving the solution itself. In order to do it, we need to apply the integration-by-parts formula from Proposition~\ref{prop:intbyparts} applied to~\eqref{lim:theta=0}  with $A=\alpha$,  $F=f$, and $h(\cdot)= T_k(\cdot)$. We obtain
\[
 -\int_{\OT} \left(\int_{u_{0} }^{u(t,x)} T_k(\s)d\s\right) \partial_t  \xi \,dx\,dt =- \int_{\OT} \alpha\cdot \nabla (T_k(u)\xi) \,dx\,dt+\int_{\OT}f T_k(u)\xi \,dx\,dt,
\]
for every $\xi\in C_c^\infty([0,T)\times \overline{\Omega})$.  Let two-parameter family of functions $\vt:\R\to\R$ be defined by
\begin{equation}
\label{vt}\vt(t):=\left(\omega_r * \mathds{1}_{[0,\tau)}\right)(t),
\end{equation}where $\omega_r$ is a standard regularizing kernel, that is $\omega_r\in C_c^\infty(\R)$, $\supp\,\omega_r\subset (-r,r)$. Note that $\supp\,\vt=[-r,\tau+r).$ In particular, for every $\tau$ there exists $r_\tau$, such that for all $r<r_\tau$ we have $\vt\in C_c^\infty([0,T))$.
Then taking~$\xi(t,x)=\vt(t)$,   we get
\begin{equation}\begin{split}\label{przedlim}
 -\int_{\OT} \left(\int_{u_{0} }^{u (t,x)} T_k(\s)d\s \right) \partial_t  \vt \,dx\,dt =-\int_{\OT}\alpha \cdot \nabla (T_k(u))\vt \,dx\,dt+\int_{\OT}fT_k(u)\vt \,dx\,dt.\end{split}
\end{equation}
On the right-hand side we integrate by parts obtaining\[ -\int_{\OT} \left(\int_{u_{0}}^{u (t,x)} T_k(\s)d\s \right) \partial_t  \vt    \,dx\,dt=\int_{\OT}\partial_t \left(\int_{u_{0}}^{u (t,x)} T_k(\s)d\s \right)   \vt \,dx\,dt.\]
Then we pass to the limit with ${r}\to 0$, apply the Fubini theorem, and integrate over the time variable\[\begin{split}\lim_{{r}\to 0} \int_{\OT}\partial_t \left(\int_{u_{0}}^{u (t,x)} T_k(\s)d\s \right)   \vt \,dx\,dt&=  \int_{\Omega_\tau}\partial_t \left(\int_{u_{0}}^{u (t,x)} T_k(\s)d\s \right)   \,dx\,dt =  \int_{\Omega } \left(\int_{u_{0}}^{u (\tau,x)} T_k(\s)d\s \right)   \,dx.\end{split}\]

Passing with ${r}\to 0$ in~\eqref{przedlim} for a.e.~$\tau\in[0,T)$ we get
\[
 \int_{\Omega} \left(\int_{0}^{u (\tau,x)} T_k(\s)d\s-\int_{0}^{u_{0}(x)} T_k(\s)d\s\right) \,dx  =-\int_{\Omega_\tau}\alpha \cdot \nabla  T_k(u) \,dx\,dt+\int_{\Omega_\tau}f T_k(u)  \,dx\,dt.
\]
Applying the Lebesgue Monotone Convergence Theorem for $k\to\infty$ we obtain
\[\frac{1}{2}\|u(\tau)\|^2_{L^2(\Omega)}-\frac{1}{2}\|u_{0}\|^2_{L^2(\Omega)}=-\int_{\Omega_\tau}\alpha  \cdot \nabla u \,dx\,dt+\int_{\Omega_\tau}f u \,dx\,dt.\]
On the other hand, considering the energy equality~\eqref{en:eq} in the first term on the left-hand side we take into account the weak lower semi-continuity of $L^2$-norm and~\eqref{conv:untLi} and realize that
\[\|u^\theta(\tau)\|^2_{L^2(\Omega)}=\lim_{\epsilon\to 0} \frac{1}{\epsilon}\int_{\tau-\epsilon}^{\tau}\|u^\theta(s)\|^2_{L^2(\Omega)}ds\geq \lim_{\epsilon\to 0}\frac{1}{\epsilon} \int_{\tau-\epsilon}^{\tau}\|u (s)\|^2_{L^2(\Omega)}ds=\|u(\tau)\|^2_{L^2(\Omega)}.\]
Erasing the nonnegative term~$\iOT\theta\bar{\nabla}m(\nabla u^\theta)\cdot\nabla u^\theta\,dx\,dt$  in~\eqref{en:eq} and then passing to the limit with $\theta\searrow 0$, we get
\begin{equation}\label{en-eq-impl} \frac{1}{2}\|u (\tau)\|^2_{L^2(\Omega)}- \frac{1}{2}\|u_{0}\|^2_{L^2(\Omega)}+\limsup_{\theta\searrow 0}\iOT A (t,x,\nabla u^\theta)\cdot \nabla u^\theta\, dx\, dt\leq \iOT f u \, dx\, dt.\end{equation}
Thus,~\eqref{limsup<aln} follows.

\medskip

\textbf{Identification of the limit $\alpha$. Conclusion by monotonicity trick.} Let us concentrate on proving  \begin{equation}
\label{lim=ca}A(t,x,\nabla u)=\alpha\qquad \text{a.e.}\quad\text{in}\quad \Omega_T.
\end{equation}
 Monotonicity assumption ($\mathcal{A}3$) of~$A$ implies
\[(A(t,x,\nabla u^\theta)-A(t,x,\eta))\cdot(\nabla u^\theta-\eta)\geq 0\qquad\text{a.e.  in }\OT, \text{ for any }\eta\in L^\infty(\OT;\rn)\subset E_M(\OT;\rn).\]
 Since $A(\cdot,\cdot,\eta)\in L_{M^*}(\OT,\rn)$, we pass to the limit with $\theta\searrow 0$ and take into account~\eqref{limsup<aln} to conclude that
\begin{equation}
\label{mono:int}
\iOT (\alpha-A(t,x,\eta))\cdot(\nabla u-\eta)\,dx\,dt\geq 0.
\end{equation}
Then Lemma~\ref{lem:mon} with ${\cal A}=\alpha$ and $\xi=\nabla u$ implies~\eqref{lim=ca}.

\medskip

\textbf{Conclusion of the proof of Theorem~\ref{theo:bound}.} We pass to the limit in  the weak formulation of bounded regularized problem~\eqref{weak-reg-bound} due to~\eqref{conv:untLi}, \eqref{limDut}, \eqref{limaDut}, and~\eqref{lim=ca}, getting the existence of~$u\in\VTMi$ satisfying
\[
-\iOT u \partial_t \vp dx\, dt- \iO u(0)\vp(0)\, dx+\iOT A (t,x,\nabla u)\cdot \nabla \vp \, dx\, dt= \iOT f\vp\, dx\, dt\quad\forall_{\vp\in C_c^\infty([0;T)\times \Omega)},\]
i.e.~\eqref{weak-bound}, which ends the proof of existence in the anisotropic case. It suffices to notice that uniqueness results from comparison priciple (Proposition~\ref{prop:comp-princ}).

\medskip

 Let us point out that in the case of any other reflexive space whenever in the proof above we apply the approximation by a~sequence of smooth functions converging in the modular, we can use instead a strongly converging  affine combination of a weakly converging sequence (ensured in   reflexive Banach spaces by Mazur's Lemma).  \end{proof}

\textbf{Proof of Theorem~\ref{theo:main0}.} It is provided in Lemma~\ref{lem:Mass} (in Appendix) that in isotropic spaces it suffices to have $({\cal M}^{iso})$ (resp. $({\cal M}^{iso}_p)$) to get $({\cal M})$ (resp. $({\cal M}_p)$). Therefore, Theorem~\ref{theo:main0} is a direct consequence of~Theorem~\ref{theo:bound}. \qed

\section{Appendix}\label{sec:appendix}

\subsection{Classics}

\begin{lem} \label{lem:TM1}
 Suppose $w_n\xrightharpoonup[n\to\infty]{}w$ in $L^1(\OT)$, $v_n,v\in L^\infty(\OT)$, and $v_n\xrightarrow[n\to\infty]{a.e.}v$. Then \[\int_\OT w_n v_n\,dx\,dt \xrightarrow[n\to\infty]{}\int_\OT w v\,dx\,dt.\]
\end{lem}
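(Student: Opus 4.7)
The plan is to use the standard decomposition
\[
\int_{\OT} w_n v_n\,dx\,dt - \int_{\OT} w v\,dx\,dt = \int_{\OT} w_n (v_n - v)\,dx\,dt + \int_{\OT} (w_n - w)\,v\,dx\,dt,
\]
and to show that each term on the right-hand side vanishes in the limit. The second term is immediate: since $v\in L^\infty(\OT)\subset(L^1(\OT))^*$ and $w_n\rightharpoonup w$ weakly in $L^1(\OT)$, by the very definition of weak $L^1$-convergence $\int_\OT (w_n-w)v\,dx\,dt\to 0$.

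The first term is the substantive one. The natural tool is the Dunford--Pettis theorem, which tells us that a weakly convergent sequence in $L^1(\OT)$ is equi-integrable; in particular $\|w_n\|_{L^1}$ is bounded by some $C$, and for every $\varepsilon>0$ there exists $\delta>0$ such that $\int_E|w_n|\,dx\,dt<\varepsilon$ for every measurable $E\subset\OT$ with $|E|<\delta$ and every $n$. Combining this with Egorov's theorem applied to the pointwise a.e.\ convergence $v_n\to v$ on the finite-measure set $\OT$, we find a measurable set $E\subset\OT$ with $|E|<\delta$ such that $v_n\to v$ uniformly on $\OT\setminus E$; in particular, for $n$ large, $|v_n-v|<\varepsilon$ there.

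Denoting $M:=\sup_n\|v_n\|_{L^\infty(\OT)}+\|v\|_{L^\infty(\OT)}$ (which is finite under the implicit standard hypothesis that $\{v_n\}$ is uniformly bounded in $L^\infty$), we estimate
\[
\left|\int_{\OT} w_n(v_n-v)\,dx\,dt\right|\leq \int_{E}|w_n|\,|v_n-v|\,dx\,dt+\int_{\OT\setminus E}|w_n|\,|v_n-v|\,dx\,dt\leq M\varepsilon+C\varepsilon,
\]
for all sufficiently large $n$. Since $\varepsilon>0$ is arbitrary, this term vanishes in the limit, which combined with the first paragraph yields the claim.

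The main obstacle is the first term: one cannot simply invoke dominated convergence because the $w_n$ are only weakly convergent and have no pointwise or dominating control. The crucial input is therefore the equi-integrability supplied by Dunford--Pettis, used together with Egorov to reduce non-uniform pointwise convergence of $v_n$ to uniform convergence off a set of small measure.
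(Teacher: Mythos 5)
Your proof is correct, and in fact the paper offers no argument to compare it with: Lemma~\ref{lem:TM1} is listed in the appendix of classical facts without proof, and your route (split $\int w_nv_n-\int wv$ into $\int w_n(v_n-v)+\int(w_n-w)v$, kill the second term by weak $L^1$ convergence against $v\in L^\infty$, and the first by Dunford--Pettis equi-integrability of $\{w_n\}$ combined with Egorov on the finite-measure set $\OT$) is exactly the standard one; the quantifier order ($\varepsilon$, then $\delta$ from uniform integrability, then the Egorov set $E$ with $|E|<\delta$) and the final bound $M\varepsilon+C\varepsilon$ are sound.

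One point you flag in passing deserves emphasis: the uniform bound $\sup_n\|v_n\|_{L^\infty(\OT)}<\infty$ is not literally among the hypotheses of the lemma as stated, and it is genuinely needed, not merely convenient. Without it the statement is false: on $(0,1)$ take $w_n=w$ with $w(x)=x^{-1/2}\in L^1$, $v=0$, and $v_n=n\,\mathds{1}_{(0,1/n^2)}$; then each $v_n\in L^\infty$, $v_n\to 0$ a.e., $w_n\rightharpoonup w$ trivially, yet $\int_0^1 w\,v_n\,dx=2$ for every $n$, which does not converge to $\int wv=0$. So your ``implicit standard hypothesis'' must indeed be added for the proof (and for the lemma) to hold; it is harmless in the paper's context, where such lemmas are applied with $v_n$ given by truncations or compositions with fixed bounded functions, hence uniformly bounded. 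With that hypothesis made explicit, your argument is complete.
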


\begin{defi}[Uniform integrability] We call a sequence  $\{f_n\}_{n=1}^\infty$ of measurable functions $f_n:\OT\to \rn$
 uniformly integrable if
\[\lim_{R\to\infty}\left(\sup_{n\in\mathbb{N}}\int_{\{x:|f_n(x,t)|\geq R\}}|f_n(x)|\,dx\,dt\right)=0.\]
 \end{defi}

\begin{defi}[Modular convergence]\label{def:convmod}
We say that a sequence $\{\xi_i\}_{i=1}^\infty$ converges modularly to $\xi$ in~$L_M(\OT;\rn)$ (and denote it by $\xi_i\xrightarrow[i\to\infty]{M}\xi$), if
\begin{itemize}
\item[i)] there exists $\lambda>0$ such that
\begin{equation*}
\int_{\OT}M\left(t,x,\frac{\xi_i-\xi}{\lambda}\right)\,dx\,dt\to 0,
\end{equation*}
equivalently
\item[ii)] there exists $\lambda>0$ such that
\begin{equation*}
 \left\{M\left(t,x,\frac{\xi_i}{\lambda}\right)\right\}_i \ \text{is uniformly integrable in } L^1(\OT)\quad \text{and}\quad \xi_i\xrightarrow[]{i\to\infty}\xi \ \text{in measure};
\end{equation*}
\end{itemize}
\end{defi}

\begin{lem}[Modular-uniform integrability,~\cite{gwiazda2}]\label{lem:unif}
Let $M$ be an $N$-function and $\{f_n\}_{n=1}^\infty$ be a~sequence of measurable functions such that $f_n:\OT\to \rn$ and $\sup_{n\in\N}\int_\OT M(t,x,f_n(x))\,dx\,dt<\infty$. Then the sequence $\{f_n\}_{n=1}^\infty$ is uniformly integrable.
\end{lem}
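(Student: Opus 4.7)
The plan is to reduce the statement to a pointwise comparison between $|f_n|$ and $M(t,x,f_n)$ on the level sets where $|f_n|$ is large, thereby realizing the de la Vallée Poussin criterion. The only structural fact about $M$ that is needed is the defining superlinear growth of an $N$-function (condition (4) of Definition~2.1); convexity and the Carathéodory property are auxiliary.

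First I would extract the following pointwise bound: for every $L>0$ there exists a threshold $R_L>0$ such that
\begin{equation*}
M(t,x,\xi)\geq L\,|\xi|\qquad \text{for a.e. }(t,x)\in\OT\text{ and all }\xi\in\rn\text{ with }|\xi|\geq R_L.
\end{equation*}
For each fixed $\xi$ with $|\xi|\geq R_L$, property (4) of Definition~2.1 yields $\essinf_{(t,x)} M(t,x,\xi)/|\xi|\geq L$, hence the inequality holds for a.e.\ $(t,x)$ outside a null set $N(\xi)$. The Carathéodory character of $M$ (measurability in $(t,x)$, continuity in $\xi$) lets one take a countable dense set of $\xi$'s in $\{|\xi|\geq R_L\}$ and pass to a common null set by continuity, upgrading the estimate to one that is valid simultaneously for all such $\xi$.

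Substituting $\xi=f_n(t,x)$ in the pointwise bound, on the level set $\{(t,x):|f_n(t,x)|\geq R_L\}$ one has $|f_n(t,x)|\leq L^{-1}M(t,x,f_n(t,x))$. Integrating and using the standing hypothesis $C:=\sup_{n\in\N}\int_\OT M(t,x,f_n)\,dx\,dt<\infty$ yields
\begin{equation*}
\sup_{n\in\N}\int_{\{|f_n|\geq R_L\}}|f_n|\,dx\,dt\ \leq\ \frac{1}{L}\sup_{n\in\N}\int_\OT M(t,x,f_n)\,dx\,dt\ \leq\ \frac{C}{L}.
\end{equation*}

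Letting $L\to\infty$ (so that the threshold $R_L\to\infty$ as well, since the same estimate holds for any $R\geq R_L$) forces the supremum to vanish, which is exactly the uniform integrability of the Definition of uniform integrability preceding the lemma. The only genuinely delicate point is the measure-theoretic step promoting the per-$\xi$ a.e.\ bound to one with a single null set common to all large $\xi$; the remainder is a direct one-line application of the superlinear growth of $M$ together with the modular bound.
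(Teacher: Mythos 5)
Your proof is correct. Note that the paper does not prove this lemma at all --- it is quoted from~\cite{gwiazda2} --- and the argument you give (de la Vall\'ee Poussin via the superlinearity condition (4) of the definition of an $N$-function, i.e. $M(t,x,\xi)\geq L|\xi|$ for $|\xi|\geq R_L$ a.e., then $\sup_n\int_{\{|f_n|\geq R_L\}}|f_n|\leq C/L$) is precisely the standard one used in that reference. The one delicate step, promoting the per-$\xi$ null set to a common one, is handled correctly by your countable-dense-set-plus-continuity argument; just make sure you run it only for countably many values of $L$ (say $L=k\in\N$, which suffices since the level-set integrals are monotone in $R$), so that the union of exceptional null sets remains null.
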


The following result can be obtained by the method of the proof of~\cite[Theorem~7.6]{Musielak}.
\begin{lem}[Density of simple functions, \cite{Musielak}]\label{lem:dens}
Suppose~\eqref{ass:M:int}. Then the set of simple functions integrable on $\OT$ is dense in $L_M(\OT)$ with respect to the modular topology.
\end{lem}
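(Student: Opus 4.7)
}
The plan is to approximate $f\in L_M(\OT;\rn)$ by integrable simple functions in two stages: first truncate $f$ to reduce to a bounded function whose support lies in a set of finite measure, then approximate each bounded function by a classical range-partition simple function, and finally combine both approximations by a diagonal argument, using convexity of $M$ to ensure one common scaling constant $\lambda$.

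First, since $f\in L_M(\OT;\rn)$, there is some $\lambda_0>0$ with $\int_{\OT}M(t,x,f/\lambda_0)\,dx\,dt<\infty$. Set $A_k=\{(t,x)\in\OT:|f(t,x)|\leq k\}$ and define the truncations $f_k=f\,\mathds{1}_{A_k}$. Then $|(f_k-f)/\lambda_0|\leq |f/\lambda_0|$ and $f_k\to f$ pointwise a.e., so by the classical Lebesgue Dominated Convergence Theorem together with convexity of $M(t,x,\cdot)$ and $M(t,x,0)=0$, we obtain
\[
\int_{\OT}M\!\left(t,x,\frac{f_k-f}{\lambda_0}\right)dx\,dt\ \xrightarrow[k\to\infty]{}\ 0.
\]

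Second, for each fixed $k$, $f_k$ is bounded (by $k$) and supported in $A_k\subset\OT$ which has finite measure. Partition the target ball $\{z\in\rn:|z|\leq k\}$ into Borel cells of diameter at most $2^{-n}$ and let $s_{k,n}$ be the simple function taking a chosen representative value on the preimage of each cell (and $0$ outside $A_k$). Then $|s_{k,n}-f_k|\leq 2^{-n}$ pointwise. Because $s_{k,n}$ takes only finitely many values on measurable subsets of $\OT$, local integrability \eqref{ass:M:int} shows $s_{k,n}$ is an integrable simple function in the required sense. Moreover, for any $\lambda>0$,
\[
M\!\left(t,x,\frac{s_{k,n}-f_k}{\lambda}\right)\leq \sup_{|z|\leq 2^{-n}/\lambda} M(t,x,z)\ \xrightarrow[n\to\infty]{}\ 0
\]
pointwise by continuity of $M(t,x,\cdot)$ at $0$, and this quantity is dominated by $M(t,x, 2^{-n_0}/\lambda\cdot e_i)$-type expressions which are integrable over $A_k$ by \eqref{ass:M:int}. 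Dominated convergence then yields $\int_{\OT}M(t,x,(s_{k,n}-f_k)/\lambda)\,dx\,dt\to 0$ as $n\to\infty$.

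Finally, combine the two approximations by a diagonal extraction: pick $k_n\to\infty$ and then $n$ large enough so that both modular integrals (with scale $\lambda_0$ for the first step, and scale $\lambda_0$ for the second) are $<1/n$. Using convexity,
\[
\int_{\OT}M\!\left(t,x,\frac{s_{k_n,n}-f}{2\lambda_0}\right)dx\,dt
\leq \tfrac12\int_{\OT}M\!\left(t,x,\frac{s_{k_n,n}-f_{k_n}}{\lambda_0}\right)dx\,dt+\tfrac12\int_{\OT}M\!\left(t,x,\frac{f_{k_n}-f}{\lambda_0}\right)dx\,dt\to 0,
\]
which is exactly the modular convergence required by Definition~\ref{def:convmod} with scaling constant $\lambda=2\lambda_0$.

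The main obstacle is the joint control by a single $\lambda$: unlike in reflexive/doubling settings, we cannot freely rescale, so we cannot treat the truncation error and the range-discretization error independently. The use of convexity of $M$ together with the fact that $f/\lambda_0$ already lies in the modular class of $f$ is what unlocks a single common scaling $2\lambda_0$ for both steps. The local integrability hypothesis \eqref{ass:M:int} on $M$ is essential throughout, both to make the approximating simple functions lie in the integrable class and to provide the integrable majorant needed in the dominated convergence argument at the discretization step.
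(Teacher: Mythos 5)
Your approach is essentially the one the paper relies on: the paper gives no proof of Lemma~\ref{lem:dens} but refers to the method of \cite[Theorem~7.6]{Musielak}, which is precisely your two-step scheme (truncation to a bounded function, then range discretization into simple functions, glued by convexity at a fixed scale $\lambda=2\lambda_0$, with \eqref{ass:M:int} supplying integrability of the simple functions and the majorants), so the argument is correct and matches the cited route. Two small points of hygiene for the anisotropic setting: in the truncation step the domination $M(t,x,(f_k-f)/\lambda_0)\leq M(t,x,f/\lambda_0)$ should be justified by the fact that $f_k-f$ equals $0$ or $-f$ pointwise together with $M(t,x,-\xi)=M(t,x,\xi)$ and $M(t,x,0)=0$ (not by $|f_k-f|\leq|f|$, since an anisotropic $M$ need not be monotone in the modulus), and in the discretization step the majorant should be made explicit, e.g. $\sup_{|z|\leq r}M(t,x,z)\leq\sum_{i=1}^N M(t,x,Nr\,e_i)$ by convexity and symmetry, which is integrable by \eqref{ass:M:int} and validates the dominated convergence you invoke.
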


\begin{theo}[The Vitali Convergence Theorem]\label{theo:VitConv} Let $(X,\mu)$ be a positive measure space, $\mu(X)<\infty $, and $1\leq p<\infty$. If $\{f_{n}\}$ is uniformly integrable in $L^p_\mu$,   $f_{n}(x)\to f(x)$ in measure  and $|f(x)|<\infty $  a.e. in $X$, then  $f\in  {L}^p_\mu(X)$
and  $f_{n}(x)\to f(x)$ in  ${L}^p_\mu(X)$.
\end{theo}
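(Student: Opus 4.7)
The plan is to split the proof into two stages: first verifying $f\in L^p_\mu(X)$, then proving convergence in $L^p$-norm by controlling $\int|f_n-f|^p\,d\mu$ through the standard trick of partitioning $X$ into a ``bad'' set where $f_n$ is far from $f$ and its complement.

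For the first stage, I would begin by extracting from $\{f_n\}$ a subsequence $\{f_{n_k}\}$ converging to $f$ almost everywhere; this is a standard consequence of convergence in measure on a finite measure space (Riesz's subsequence theorem). Uniform integrability of $\{|f_n|^p\}$ in the sense of the Appendix definition, combined with $\mu(X)<\infty$, gives the uniform bound $\sup_n\int_X|f_n|^p\,d\mu<\infty$ (split into the tail and a bounded part). Fatou's lemma applied to the a.e.\ convergent subsequence then yields $\int_X|f|^p\,d\mu\leq\liminf_k\int_X|f_{n_k}|^p\,d\mu<\infty$, so that $f\in L^p_\mu(X)$.

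For the second stage, fix $\varepsilon>0$. The first step is to upgrade the tail-type uniform integrability to an absolute-continuity statement: since $\int_E|f_n|^p\leq\int_{\{|f_n|^p\geq R\}}|f_n|^p+R\mu(E)$, choosing $R$ large (using uniform integrability) and then $\mu(E)$ small (using $R$ now fixed), we obtain $\delta>0$ such that $\mu(E)<\delta$ implies both $\sup_n\int_E|f_n|^p\,d\mu<\varepsilon$ and $\int_E|f|^p\,d\mu<\varepsilon$ (the second estimate follows from absolute continuity of the integral of the single function $|f|^p\in L^1_\mu$). Next, choose $\eta>0$ with $\eta^p\mu(X)<\varepsilon$. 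Convergence in measure yields $n_0$ such that for $n\geq n_0$ the set $E_n:=\{x:|f_n(x)-f(x)|>\eta\}$ satisfies $\mu(E_n)<\delta$. Splitting
\[\int_X|f_n-f|^p\,d\mu=\int_{E_n}|f_n-f|^p\,d\mu+\int_{X\setminus E_n}|f_n-f|^p\,d\mu\]
and using the convexity bound $|a-b|^p\leq 2^{p-1}(|a|^p+|b|^p)$ on the first term together with the pointwise bound $|f_n-f|\leq\eta$ on the second gives $\int_X|f_n-f|^p\,d\mu\leq 2^p\varepsilon+\varepsilon$, proving convergence in $L^p_\mu(X)$.

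The main obstacle is essentially bookkeeping: ensuring the correct interplay between the two parameters $\delta$ (from uniform integrability) and $\eta$ (from convergence in measure), and translating the tail-type definition of uniform integrability from the Appendix into the Dunford--Pettis-type absolute-continuity condition needed for the split argument. No genuinely new ideas are required beyond the classical Vitali scheme, and the finiteness hypothesis $\mu(X)<\infty$ removes the need for a separate ``uniform smallness at infinity'' condition that would otherwise appear.
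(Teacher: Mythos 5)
Your proof is correct: both stages are the standard Vitali argument (Fatou on an a.e.\ convergent subsequence extracted via convergence in measure to get $f\in L^p_\mu$, then the decomposition of $X$ into $E_n=\{|f_n-f|>\eta\}$ and its complement, with the tail-type uniform integrability upgraded to the absolute-continuity form), and the bookkeeping with $R$, $\delta$, $\eta$ is handled properly. Note that the paper offers no proof to compare against: Theorem~\ref{theo:VitConv} is stated in the Appendix under ``Classics'' as a known result and is simply cited (e.g.\ in the monotonicity trick, Lemma~\ref{lem:mon}), so your argument just supplies the classical proof the authors take for granted.
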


In the isotropic case we have the following corollary of the classical result by Gossez.
\begin{theo}[\cite{Gossez}, Theorem 4]\label{theo:approx-Gossez} Suppose $\Omega\subset\rn$, $N\geq 1$ is a bounded Lipschitz domain, $B:\rp\to\rp$, and $g\in W_0^{1,1}(\Omega)$ is such that $\int_\OT B(|\nabla \vp|)dxdt<\infty$. Then there exists   $\vp_\delta\in C_c^\infty(\Omega)$ such that
\[ g_\delta\xrightarrow[\delta\to \infty]{}g\ \text{strongly in } \ W^{1,1}(\Omega)\quad\text{and}\quad \exists_{\lambda>0}:\ \ \nabla g_\delta\xrightarrow[\delta\to 0]{mod} \nabla g\ \text{ in } L_B(\Omega).\]
\end{theo}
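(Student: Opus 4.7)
The plan is to mimic the structure of Theorem~\ref{theo:approx-sp} above, but with two simplifications afforded by the isotropic, homogeneous ($x$-independent) setting: (i) no balance condition is needed because the uniform modular bound on the regularizing operator follows from pure convexity, and (ii) no time variable appears. First I would use the standard fact that a bounded Lipschitz domain $\Omega$ can be written as $\Omega=\bigcup_{i\in I}\Omega_i$ with each $\Omega_i$ star-shaped with respect to some ball $B^i$ of radius $R_i$, together with a subordinate smooth partition of unity $\{\theta_i\}$. Since $g\in W_0^{1,1}(\Omega)$ with $\int_\Omega B(|\nabla g|)\,dx<\infty$, and $B$ is convex with $B(0)=0$, also $\theta_i g\in W_0^{1,1}(\Omega_i)$ satisfies $\int_{\Omega_i} B(|\nabla(\theta_i g)|)\,dx<\infty$, so it is enough to prove the statement on each star-shaped piece separately.

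On a fixed star-shaped $\Omega_i$ with radius $R=R_i$ I would define the combined dilation-mollification
\[
S_\delta(g)(x) := \int_{\Omega_i} \rho_\delta(x-y)\, g(y/\kappa_\delta)\, dy, \qquad \kappa_\delta := 1-\delta/R,
\]
exactly as in \eqref{Sdxi} (time variable suppressed), where $\rho_\delta$ is a standard mollifier. For $0<\delta<R$ one has $\overline{\kappa_\delta\Omega_i+\delta B(0,1)}\subset\Omega_i$, so $S_\delta g\in C_c^\infty(\Omega_i)$. Because $B$ is a function of $|\xi|$ alone, Jensen's inequality applied twice (once to the convolution, once to the dilation) gives at once the uniform estimate
\[
\int_{\Omega_i} B\bigl(|\nabla S_\delta g|\bigr)\, dx \;\leq\; C\int_{\Omega_i} B\bigl(|\nabla g|\bigr)\, dx,
\]
where $C$ is independent of $\delta$. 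Here no $(\mathcal{M})$-type hypothesis is needed; this is precisely the place where the $x$-dependent case of Lemma~\ref{lem:step2prev} required the balance condition, but in Gossez's homogeneous setting it reduces to a one-line convexity argument.

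Now I would invoke Lemma~\ref{lem:dens} to pick, for each $n$, a simple function $E^n=\sum_{j=0}^n \mathds{1}_{E_j}\vec a_j$ with $E^n\xrightarrow{M}\nabla g$ modularly in $L_B(\Omega_i;\rn)$ (with some $\lambda_3>0$), and write the familiar splitting
\[
\nabla S_\delta g - \nabla g \;=\; \bigl(\nabla S_\delta g - S_\delta E^n\bigr) + \bigl(S_\delta E^n - E^n\bigr) + \bigl(E^n - \nabla g\bigr),
\]
applying convexity of $B$ to separate the three terms modularly with $\lambda=\lambda_1+\lambda_2+\lambda_3$. Taking $\lambda_1=\lambda_3$, the first term is controlled by the uniform boundedness estimate above applied to $\nabla g-E^n$, and thus by the third term up to a constant. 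The third term tends to $0$ in $n$ by definition of $E^n$. For the middle term, Jensen and Fubini reduce it to a shift integral of a bounded simple function, and continuity of translations in $L^1$ together with the dominated convergence theorem (the integrand is bounded by $\sup_{|\eta|=1}B(\lambda_2^{-1}\sum|\vec a_j|\eta)<\infty$) force it to $0$ as $\delta\to 0$ for every fixed $n$.

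Passing $\delta\to 0$ first and then $n\to\infty$ yields modular convergence $\nabla S_\delta g\xrightarrow{M}\nabla g$ in $L_B(\Omega_i;\rn)$; modular convergence implies strong $L^1$ convergence, and the Poincar\'e inequality promotes this to strong $W^{1,1}$ convergence on $\Omega_i$. Summing over the partition of unity produces $g_\delta:=\sum_i S_\delta(\theta_i g)\in C_c^\infty(\Omega)$ with the required properties. The main obstacle is the middle term $S_\delta E^n-E^n$: unlike the dilation, the modular norm does not behave well under convolution without additional structure, and the trick of approximating by simple functions (so that $B$ is automatically controlled on their bounded range) is what makes the argument work in the absence of any $\Delta_2$ hypothesis on $B$.
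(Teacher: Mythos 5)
The paper itself gives no proof of this statement: it is quoted verbatim as Theorem~4 of Gossez and used as a black box, so there is no internal argument to compare against. What you have done is re-derive the cited result by specializing the paper's own ``approximation in space'' machinery (Theorem~\ref{theo:approx-sp}: star-shaped decomposition of the Lipschitz domain, the dilation--mollification operator $S_\delta$ of~\eqref{Sdxi}, the three-term splitting through a simple function from Lemma~\ref{lem:dens}) to the homogeneous isotropic case, correctly observing that the uniform modular bound of Lemma~\ref{lem:step2prev} then collapses to Jensen's inequality and a change of variables, with no condition of type $(\mathcal{M})$. This is a legitimate alternative to Gossez's original route (segment property and inward translations rather than dilations), and the scheme of the limit passages ($\delta\to 0$ for fixed $n$, then $n\to\infty$) is the right one. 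One detail you gloss over, as does the paper, is that $\nabla S_\delta(\theta_i g)=\kappa_\delta^{-1}S_\delta\big(\nabla(\theta_i g)\big)$ rather than $S_\delta\big(\nabla(\theta_i g)\big)$, so the factor $\kappa_\delta^{-1}\to 1$ must be absorbed by one extra convexity step before the splitting; this is routine.

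The one step that does not work as written is the localization. You claim $\int_{\Omega_i}B(|\nabla(\theta_i g)|)\,dx<\infty$ ``since $B$ is convex with $B(0)=0$''. Convexity controls the term $\theta_i\nabla g$, but not the term $g\nabla\theta_i$: in the present theorem $g$ is only in $W_0^{1,1}(\Omega)$ with finite gradient modular and is \emph{not} bounded, whereas in Theorem~\ref{theo:approx-sp} the function $\vp\in\VTMi$ is bounded, which is precisely what makes the analogous step there immediate; moreover Theorem~\ref{theo:Poincare} cannot be invoked here because it is stated under $\Delta_2$. To repair this you should first show $g\in L_B(\Omega)$, e.g.\ by the line-integral form of the modular Poincar\'e inequality valid for any convex $B$ with $B(0)=0$: extend $g$ by zero to $\rn$, use absolute continuity on almost every line and Jensen on segments of length at most $d=\mathrm{diam}\,\Omega$ to get $\int_\Omega B(|g|/d)\,dx\leq\int_\Omega B(|\nabla g|)\,dx$. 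This yields $\int B(|g\nabla\theta_i|/\lambda_0)\,dx<\infty$ for some $\lambda_0>0$ depending on $d$ and $\max_i\|\nabla\theta_i\|_\infty$ --- note the conclusion is modular, for \emph{some} $\lambda$, not for $\lambda=1$ as you wrote, which is all the splitting argument requires. (Alternatively one can first reduce to bounded $g$ via truncations, since $\int B(|\nabla T_k(g)-\nabla g|)\,dx=\int_{\{|g|\geq k\}}B(|\nabla g|)\,dx\to 0$, but then one must check that a single modular scale $\lambda$ serves the diagonal sequence, which the Poincar\'e route avoids.) With this repair your proof is complete.
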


\subsection{Isotropic conditions}

\begin{lem}[Isotropic conditions]\label{lem:Mass} Suppose  a cube $\Qd$ and interval $I_i^\delta$ are an arbitrary ones defined in (${\cal M}$) with $\delta<\delta_0(N,M)$ and a locally integrable $N$--function $M:[0,T)\times\R\times[0,\infty)\rightarrow[0,\infty)$  satisfies condition $({\cal M}^{iso})$ or $({\cal M}^{iso}_p)$.  Let us consider function   $ \Mijd $   given by~\eqref{Mijd}, and its greatest convex minorants $(\Mijd)^{**}$. Then there exists a function  $\Theta:[0,1]\times\rp\to \rp$ (resp.  $\Theta_p:[0,1]\times\rp\to \rp$)  such that $(\mathcal{M})$ (resp. $(\mathcal{M}_p)$) is satisfied.
\end{lem}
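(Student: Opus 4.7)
The plan is to reduce $(\mathcal{M})$ to $(\mathcal{M}^{iso})$ via a one-variable reduction and an explicit convex-minorant construction. In the isotropic case, I write $M(t,x,\xi)=\widetilde{M}(t,x,|\xi|)$ and $m_{I,Q}(s):=\essinf_{(t,x)\in I\times Q}\widetilde{M}(t,x,s)$. By radial symmetry and evenness of the objects involved, the multivariate convex envelope $(M_{I,Q})^{**}(\xi)$ coincides with the one-variable convex envelope $m_{I,Q}^{**}(|\xi|)$. Thus it suffices to establish
\[
\widetilde{M}(t,x,s)\leq \Theta(\delta,s)\,m_{I,Q}^{**}(s)\qquad \text{for }(t,x)\in I\times Q,\ s>|\xi_0|.
\]

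Next, since the time--space diameter of the cylinder $I_i^\delta\times\widetilde{Q}_j^\delta$ is at most $\kappa\delta$ with $\kappa:=1+4\sqrt{N}\,c_{sp}$, applying $(\mathcal{M}^{iso})$ pointwise and taking the essential infimum over $(\tau,y)$ in the cylinder yields
\[
\widetilde{M}(t,x,s)\leq \Theta^{iso}(\kappa\delta,s)\,m_{I,Q}(s),\qquad (t,x)\in I\times Q,\ s\geq 0.
\]

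The main step is the comparison between $m_{I,Q}$ and $m_{I,Q}^{**}$. For a fixed $s_0>|\xi_0|$ and a reference point $(t_0,x_0)\in I\times Q$, I consider
\[
\widetilde{L}_{s_0}(s):=\max\Bigl\{\tfrac{1}{\Theta^{iso}(\kappa\delta,s_0)}\bigl(\widetilde{M}(t_0,x_0,s_0)+p_0(s-s_0)\bigr),\,0\Bigr\},\qquad p_0\in\partial_s\widetilde{M}(t_0,x_0,s_0),
\]
i.e.\ the tangent line to $\widetilde{M}(t_0,x_0,\cdot)$ at $s_0$ rescaled by $1/\Theta^{iso}(\kappa\delta,s_0)$ and truncated from below by zero. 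It is convex. On $[0,s_0]$ the bound $\widetilde{L}_{s_0}\leq m_{I,Q}$ follows from the tangent inequality for $\widetilde{M}(t_0,x_0,\cdot)$, monotonicity of $\Theta^{iso}$ in its second variable, and the preceding display. On $(s_0,\infty)$ the linear growth of $\widetilde{L}_{s_0}$ is eventually outrun by the superlinear growth of $m_{I,Q}$ (inherited from the $N$-function property via the preceding display), and any finite intermediate violation is repaired by lowering the slope $p_0$ or adjoining an auxiliary convex piece that does not alter $\widetilde{L}_{s_0}(s_0)$. This gives
\[
m_{I,Q}^{**}(s_0)\geq \widetilde{L}_{s_0}(s_0)=\frac{\widetilde{M}(t_0,x_0,s_0)}{\Theta^{iso}(\kappa\delta,s_0)}.
\]

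To conclude, taking $(t_0,x_0)=(t,x)$ yields $\widetilde{M}(t,x,s)/m_{I,Q}^{**}(s)\leq \Theta^{iso}(\kappa\delta,s)$, so setting $\Theta(\delta,s):=\Theta^{iso}(\kappa\delta,s)$ verifies the envelope inequality in $(\mathcal{M})$. The required $\limsup_{\delta\to 0}\Theta(\delta,\delta^{-N})<\infty$ follows from the analogous property of $\Theta^{iso}$ via the substitution $\delta'=\kappa\delta$, with the factor $\kappa^{N}$ in the second argument absorbed by a harmless monotone enlargement of $\Theta^{iso}$. The $(\mathcal{M}^{iso}_{p})$ case is identical, with $\delta^{-N/p}$ replacing $\delta^{-N}$. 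The principal obstacle is the tail control for $s>s_{0}$: because $\Theta^{iso}(\kappa\delta,s)/\Theta^{iso}(\kappa\delta,s_{0})\geq 1$ on this half-line, the affine extension of $\widetilde{L}_{s_{0}}$ is not automatically dominated by $m_{I,Q}$, and one must carefully trade its linear growth against the superlinear growth of $m_{I,Q}$, possibly via an intermediate convex cap.
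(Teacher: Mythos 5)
Your one–variable reduction and your estimate on $[0,s_0]$ are fine, but the proof has a genuine gap exactly where you flag it: you never show that $\widetilde L_{s_0}$ (or any substitute with the same value at $s_0$) is a minorant of $m_{I,Q}$ on $(s_0,\infty)$, and without that the inequality $m_{I,Q}^{**}(s_0)\geq\widetilde L_{s_0}(s_0)$ has no basis, since $m_{I,Q}^{**}$ is the supremum of \emph{global} convex minorants. The two proposed repairs do not work as stated. Lowering the slope $p_0$ while keeping the value at $s_0$ raises the line on $[0,s_0)$ and destroys the estimate there, because that estimate used precisely the tangent property of $\widetilde M(t_0,x_0,\cdot)$ combined with monotonicity of $\Theta^{iso}$ in $s$, which is available only for $s\leq s_0$. ``Adjoining an auxiliary convex piece that does not alter $\widetilde L_{s_0}(s_0)$'' is circular: producing a global convex minorant of $m_{I,Q}$ with value $\widetilde M(t_0,x_0,s_0)/\Theta^{iso}(\kappa\delta,s_0)$ at $s_0$ \emph{is} the assertion $m_{I,Q}^{**}(s_0)\geq\widetilde M(t_0,x_0,s_0)/\Theta^{iso}(\kappa\delta,s_0)$ that you are trying to prove. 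On $(s_0,\infty)$ your hypotheses only give $\widetilde L_{s_0}(s)\leq \frac{\Theta^{iso}(\kappa\delta,s)}{\Theta^{iso}(\kappa\delta,s_0)}\,m_{I,Q}(s)$ with a factor $\geq 1$, so the rescaled tangent can overshoot $m_{I,Q}$ on a whole interval to the right of $s_0$ (take $\widetilde M(t_0,x_0,\cdot)$ steep at $s_0$ while the infimum is nearly flat just above $s_0$); eventual domination by superlinearity cannot repair such an intermediate violation.

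Note also that your claimed conclusion is stronger than what the paper establishes: a single factor $\Theta^{iso}$, versus the paper's $\Theta(\delta,s)=\big(\Theta^{iso}(c\delta,s)\big)^2$. The natural convexity argument through the contact points $s_1<s_0<s_2$ of the envelope gives $\widetilde M(t,x,s_0)\leq q\,\Theta^{iso}(\cdot,s_1)m_{I,Q}(s_1)+(1-q)\,\Theta^{iso}(\cdot,s_2)m_{I,Q}(s_2)$, and the factor $\Theta^{iso}(\cdot,s_2)$ at the far contact point $s_2>s_0$ is not controlled by the assumptions (only $\limsup_{\delta\to 0}\Theta^{iso}(\delta,\delta^{-N})$ is assumed, while $\Theta^{iso}$ may grow arbitrarily in its second argument). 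The paper sidesteps this by splitting $M/(\Mijd)^{**}=[M/\Mijd]\cdot[\Mijd/(\Mijd)^{**}]$, realizing the infima at points of the cylinder by continuity, and bounding the second quotient by a monotonicity analysis of the function $h(q)$, maximized at $q=1$, which yields $\Theta^{iso}(c\delta,s_1)\leq\Theta^{iso}(c\delta,s)$ evaluated only at the \emph{left} contact point $s_1\leq s$; the price is the square of $\Theta^{iso}$, harmless for the $\limsup$ condition. To salvage your tangent-line strategy you must supply the missing tail argument, and you should expect to lose a second factor of $\Theta^{iso}$ in doing so.
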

\begin{proof} We prove the claim under assumption  $(\mathcal{M})$ and indicating the necessary slight modification in the case of  $(\mathcal{M}_p)$. 

Recall the notation introduced in Lemma~\ref{lem:step2prev}. We fix an arbitrary $y\in Q^\delta_j$ and note that
		\begin{equation}\label{Quotient}
			\frac{M(t,y,s)}{(\Mijd)^{**}(s)}=\frac{M(t,y,s)}{\Mijd(s)}\frac{\Mijd(s)}{(\Mijd)^{**}(s)}.
		\end{equation}
		We estimate separately both quotients on the right hand side of the latter equality. By continuity of $M$ we find $(\bar{t},\bar{y})\in I_i^\delta\times\widetilde{Q}^\delta_j$ such that $\Mijd(s)=M(\bar{t},\bar{y},s)$. Then for sufficiently small $\delta$ we have
		\begin{equation}\label{FirQuoEst}
		\frac{M(t,y,s)}{\Mijd(s)}=	\frac{M(t,y,s)}{M(\bar{t},\bar y,s)}\leq \Theta^{iso}(|t-\bar{t}|+c_x |x-\bar{x} | ,s)\leq \Theta^{iso}(c\delta,s) 
		\end{equation}
with a constant $c_x$ depending on the dimension only. If  $(\mathcal{M}_p)$ is assumed instead of  $(\mathcal{M})$, we have here $\Theta^{iso}_p$.		
		
		In order to estimate the second quotient in \eqref{Quotient} we observe first that if $s\in[0,\infty)$ is such that $\Mijd(s)=(\Mijd)^{**}(s)$ then the statement is obvious. Therefore, we assume that $\Mijd(s_0)>(\Mijd)^{**}(s_0)$ at some $s_0$. Due to continuity of $\Mijd$ and $(\Mijd)^{**}$ there is a neighborhood $U$ of $s_0$ such that $\Mijd>(\Mijd)^{**}$ on $U$. Consequently, $(\Mijd)^{**}$ is affine on $U$. Moreover, Definition~\ref{def:Nf} implies that $m_1\leq \Mijd\leq m_2$, where $m_1$ and $m_2$ are convex. Therefore there are $s_1,s_2$ such that $U\subset(s_1,s_2)$, $\Mijd>(\Mijd)^{**}$ on $(s_1,s_2)$, $(\Mijd)^{**}(s_i)=M^\delta_j(s_i)$, $i=1,2$ and $(\Mijd)^{**}$ is an affine function on $[s_1,s_2]$, i.e.
\begin{equation}\label{ConvexificationIsAffine}
	(\Mijd)^{**}(qs_1+(1-q)s_2)=q\Mijd(s_1)+(1-q)\Mijd(s_2),\qquad\text{for}\quad q\in[0,1].
\end{equation}
We note that we consider $s_1>0$, because it follows that $0=\Mijd(0)=(\Mijd)^{**}(0)$. Now, thanks to the continuity of $M$ we find $(t_i,y_i)\in I_i^\delta\times\wt{Q}^\delta_j$ such that $M^\delta_j(s_i)=M(t_i,y_i,s_i)$, $i=1,2$. Consequently, it follows from \eqref{ConvexificationIsAffine} that
\begin{equation}\label{ConvexificationIsAffineII}
	(\Mijd)^{**}(qs_1+(1-q)s_2)=qM(t_1,y_1,s_1)+(1-q)M(t_2,y_2,s_2).
\end{equation}
Denoting $\tilde s=qs_1+(1-q)s_2$ we get
\begin{equation}\label{QuoBiConEst}
	\frac{\Mijd\left(\tilde s\right)}{(\Mijd)^{**}\left(\tilde s\right)}\leq \frac{M\left(t_2,y_2,\tilde s\right)}{qM(t_1,y_1,\xi_1)+(1-q)M(t_2,y_2,s_2)}\leq\frac{qM(y_2,s_1)+(1-q)M(t_2,y_2,s_2)}{qM(t_1,y_1,\xi_1)+(1-q)M(t_2,y_2,\xi_2)}.
\end{equation}
Next, we observe that the definition of $\Mijd$ implies $M(t_1,y_1,s_1)=\Mijd(s_1)\leq M(t_2,y_2,s_1)$. We can assume without loss of generality that
\begin{equation}\label{MXiOneIneq}
	M(t_1,y_1,s_1)< M(t_2,y_2,s_1)
\end{equation}
because for $M(t_1,y_1,s_1)= M(t_2,y_2,s_1)$ inequality \eqref{QuoBiConEst} implies $\Mijd\leq(\Mijd)^{**}$ on $[s_1,s_2]$. Since we have always $\Mijd\geq(\Mijd)^{**}$ we arrive at $\Mijd=(\Mijd)^{**}$ on $[s_1,s_2]$.

Let us consider a function $h:[0,1]\rightarrow\R$ defined by
\begin{equation*}
	h(q)=\frac{qM(t_2,y_2,s_1)+(1-q)M(t_2,y_2,s_2)}{qM(t_1,y_1,s_1)+(1-q)M(t_2,y_2,s_2)}.
\end{equation*}
Then we compute
\begin{equation*}
	h'(q)=\frac{(M(t_2,y_2,s_1)-M(t_1,y_1,s_1))M(t_2,y_2,s_2)}{(q(M(t_1,y_1,s_1)-M(t_2,y_2,s_2))+M(t_2,y_2,s_2))^2}.
\end{equation*}
Obviously, we have $h'>0$ on $(0,1)$ due to \eqref{MXiOneIneq}. Therefore the maximum of $h$ is attained at $q=1$, which implies
\begin{equation}
	\frac{\Mijd\left(\tilde s\right)}{(\Mijd)^{**}\left(\tilde s\right)}\leq\frac{M(t_2,y_2,s_1)}{M(t_1,y_1,s_1)}.
\end{equation}
Then for sufficiently small $\delta$ we infer
\begin{equation}\label{SecQuoEst}
	\frac{\Mijd\left(\tilde s\right)}{(\Mijd)^{**}\left(\tilde s\right)}\leq\frac{M(t_2,y_2,s_1)}{M(t_1,y_1,s_1)}\leq \Theta^{iso}(|t_1-{t}_2|+c_x |x_1-{x}_2 | ,s_1)\leq \Theta^{iso}(|t_1-{t}_2|+c_x |x_1-{x}_2 |,s)\leq \Theta^{iso}(c\delta,s).
\end{equation}
since $y_1,y_2\in\tilde{Q}^\delta_j$ implies $|y_1-y_2|\leq 4\delta\sqrt{N}<1$. Combining \eqref{Quotient} with \eqref{FirQuoEst} and \eqref{SecQuoEst} we get
\begin{equation*}
\begin{split}	\frac{M(t,y,s)}{(\Mijd)^{**}(s)}\leq \Big(\Theta^{iso}(c\delta,s)\Big)^2=:\Theta (\delta,s). \end{split}
\end{equation*} 
Under the assumption $(\mathcal{M}_p)$, we shall have here $\Theta_p (\delta,s):=\Big(\Theta_p^{iso}(c\delta,s)\Big)^2$.
\end{proof}
 
 \subsection{Monotonicity trick}
 As a consequence of weak monotonicity, we are able to identify some limits using the following monotonicity trick applied e.g. in~\cite{pgisazg1,pgisazg2,gwiazda-ren-ell,Aneta}, but never extracted as a separate result.
\begin{lem}[Monotonicity trick]\label{lem:mon}
Suppose $A$ satisfies conditions {(A1)-(A3)} with an $N$-function $M$. Assume further that there exist \[{\cal A}\in L_{M^*}(\OT;\rn)\quad\text{ and }\quad \xi\in L_{M}(\OT;\rn),\] such that
\begin{equation}
\label{anty-mon}
\int_\OT \big({\cal A} -A(t,x,\eta)\big)\cdot(\xi(t,x) -\eta)\,dx\,dt\geq0\qquad\forall_{\eta\in L^\infty(\Omega,\rn)}.
\end{equation}
Then
\[A(t,x,\xi)={\cal A}\qquad\text{a.e. in }\ \OT.\]
\end{lem}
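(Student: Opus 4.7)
The plan is to carry out a Minty--Browder-type monotonicity argument. The complication is that admissible test functions in \eqref{anty-mon} must lie in $L^\infty(\OT;\rn)$, whereas $\xi$ itself need not; my strategy is therefore to truncate $\xi$, insert bilateral perturbations of the truncation, and remove first the truncation and then the perturbation.

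Fix $w\in L^\infty(\OT;\rn)$ and $\tau>0$, and let $E_j:=\{(t,x)\in\OT:|\xi(t,x)|\leq j\}$, so that $E_j\uparrow\OT$ up to a null set. Set $\eta_{j,\tau}^{\pm}:=\xi\,\mathds{1}_{E_j}\pm\tau w$; these lie in $L^\infty(\OT;\rn)$ with $\|\eta_{j,\tau}^{\pm}\|_\infty\leq j+\tau\|w\|_\infty$, hence are admissible in \eqref{anty-mon}. Substituting $\eta_{j,\tau}^{-}$ and splitting along $E_j$ versus $\OT\setminus E_j$ gives
\[
\tau\int_{E_j}({\cal A}-A(t,x,\xi-\tau w))\cdot w\,dx\,dt + \int_{\OT\setminus E_j}({\cal A}-A(t,x,-\tau w))\cdot(\xi+\tau w)\,dx\,dt \geq 0,
\]
since on $\OT\setminus E_j$ the first argument of $A$ reduces to $-\tau w$. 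Because $-\tau w$ is uniformly bounded, ($\mathcal{A}$\ref{A2}) combined with convexity of $M(t,x,\cdot)$ and the local integrability assumption (bounding $M(t,x,z)$ for $|z|\leq R$ by a finite sum of $M(t,x,\pm\sqrt{N}R\,e_k)$) gives $A(t,x,-\tau w)\in L_{M^*}(\OT;\rn)$; together with ${\cal A}\in L_{M^*}$, $\xi\in L_M$, and the generalized H\"older inequality \eqref{inq:Holder}, the second integrand lies in $L^1(\OT)$ and its integral vanishes as $|\OT\setminus E_j|\to 0$.

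Passing to the limit $j\to\infty$ (dominated convergence together with the one-sided $L^1$-bound on $A(t,x,\xi-\tau w)\cdot w$ that is extracted from the inequality itself) and dividing by $\tau>0$ yields $\int_\OT({\cal A}-A(t,x,\xi-\tau w))\cdot w\,dx\,dt\geq 0$. Running the identical scheme with $\eta_{j,\tau}^{+}$ produces the reverse inequality with $\xi+\tau w$ in place of $\xi-\tau w$. Sending $\tau\to 0^+$, continuity of $A(t,x,\cdot)$ from ($\mathcal{A}$\ref{A1}) gives $A(t,x,\xi\pm\tau w)\to A(t,x,\xi)$ a.e., and Vitali's theorem (Theorem~\ref{theo:VitConv}), applied with the uniform integrability inherited from the two-sided bounds, collapses the two inequalities to $\int_\OT({\cal A}-A(t,x,\xi))\cdot w\,dx\,dt=0$ for every $w\in L^\infty(\OT;\rn)$; this forces ${\cal A}=A(t,x,\xi)$ a.e.\ in $\OT$.

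The subtle point, absent in the reflexive or $\Delta_2$ setting, is legitimising the limits $j\to\infty$ and $\tau\to 0^+$ without a doubling condition on $M$. A direct $L^1(\OT)$ bound on $A(t,x,\xi-\tau w)\cdot w$ is not available from ($\mathcal{A}$\ref{A2}) alone, because $M(t,x,\xi-\tau w)$ need not be dominated by an integrable function in the absence of $\Delta_2$. The remedy is to read the needed uniform integrability off the variational inequality itself: the bilateral choice $\pm\tau w$ sandwiches $\int_{E_j}A(t,x,\xi\mp\tau w)\cdot w$ between quantities depending only on ${\cal A}\cdot w\in L^1$, which is precisely the input Vitali's theorem requires.
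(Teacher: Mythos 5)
Your overall Minty-type strategy is the right one, and your decomposition of the tested inequality with $\eta^{\pm}_{j,\tau}=\xi\,\mathds{1}_{E_j}\pm\tau w$ is computed correctly; the treatment of the term over $\OT\setminus E_j$ (for fixed $\tau$) is also fine. The genuine gap is the limit $j\to\infty$ in the term $\int_{E_j}\big({\cal A}-A(t,x,\xi\mp\tau w)\big)\cdot w\,dx\,dt$ and the subsequent Vitali step in $\tau$. Both require that $A(t,x,\xi\mp\tau w)\cdot w$ be integrable (resp.\ that $\{A(t,x,\xi\pm\tau w)\}_{\tau}$ be uniformly integrable) \emph{on all of $\OT$}, where the argument $\xi\mp\tau w$ is unbounded. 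Since $\xi$ is only known to lie in the space $L_M$ (not the class ${\cal L}_M$), $\int_\OT M(t,x,\xi\mp\tau w)\,dx\,dt$ may be infinite, and ($\mathcal{A}$2) then yields no bound on $M^*(t,x,A(t,x,\xi\mp\tau w))$; absent $\Delta_2$ there is no rescaling argument to recover one. Your proposed remedy does not close this: the inequality with $\eta^-$ bounds $\int_{E_j}A(t,x,\xi-\tau w)\cdot w$ from above, while the inequality with $\eta^+$ bounds $\int_{E_j}A(t,x,\xi+\tau w)\cdot w$ from below -- two different integrands, so there is no sandwich of a single quantity; and even a two-sided bound on signed integrals would not give uniform integrability, which is a statement about $\sup\int_{\{|f|>R\}}|f|$, i.e.\ about absolute values, and is exactly what Vitali's theorem needs. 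Note also that you cannot repair the order of limits by sending $\tau\to0$ first: for fixed $j$ the complement term tends to $\int_{\OT\setminus E_j}{\cal A}\cdot\xi\,dx\,dt\neq o(\tau)$, so dividing by $\tau$ destroys it.

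The paper's proof avoids this trap by never evaluating $A$ at an unbounded argument: it perturbs only on the set $\Omega_T^j=\{|\xi|\le j\}$ and introduces a second, larger truncation level $i$, testing with $\eta=\xi\,\mathds{1}_{\Omega_T^i}+hz\,\mathds{1}_{\Omega_T^j}$. Then on the annulus the integrand vanishes, on $\OT\setminus\Omega_T^i$ one only meets $A(t,x,0)=0$ together with the fixed $L^1$ function ${\cal A}\cdot\xi$ on a shrinking set (so that term dies as $i\to\infty$ \emph{before} dividing by $h$), and on $\Omega_T^j$ the argument $\xi+hz$ is uniformly bounded, so local integrability of $M$, ($\mathcal{A}$2), Lemma~\ref{lem:unif} and Vitali's theorem apply on the finite-measure set $\Omega_T^j$; the conclusion $A(t,x,\xi)={\cal A}$ is first obtained a.e.\ on $\Omega_T^j$ and then globally since $j$ is arbitrary. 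If you localize your perturbation in the same way (keep $\pm\tau w$ supported where $|\xi|\le j$, and add the intermediate truncation so the far-field term involves $A(t,x,0)$ only), your argument goes through; as written, the steps invoking dominated convergence over $\OT$ and Vitali in $\tau$ are unjustified.
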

\begin{proof} Let us define\begin{equation*}
\Omega_T^K=\{(t,x)\in\OT:\ |\xi(t,x)|\leq K\}.
\end{equation*}
 Fix arbitrary $0<j<i$ and notice that $\Omega_T^j\subset\Omega_T^i$.

We consider~\eqref{anty-mon} with
\[\eta=\xi\mathds{1}_{\Omega_T^i}+hz\mathds{1}_{\Omega_T^j},\]
where $h>0$ and $z\in L^\infty(\OT;\rn)$, namely
\begin{equation*}
\iOT( {\cal A}- A(t,x,\xi\mathds{1}_{\Omega_T^i}+hz\mathds{1}_{\Omega_T^j}) )\cdot( \xi-\xi\mathds{1}_{\Omega_T^i}-hz\mathds{1}_{\Omega_T^j})\, dx\,dt\geq  0.
\end{equation*}
Notice that it is equivalent to
\begin{equation}
\label{A-po-mon}\begin{split}& \int_{\OT\setminus\Omega_T^i}  \big({\cal A} -A(t,x,0)\big)\cdot\xi\,dx\,dt  +h\int_{ \Omega_T^j} (A(t,x,\xi+hz)-{\cal A} )\cdot z\,  dx\,dt \geq  0.\end{split}
\end{equation}
The first expression above tend to zero when $i\to\infty.$ Indeed,  { (A2)} implies $A(t,x,0)=0$, moreover ${\cal A}\in L_{M^*}(\OT;\rn)=\big(E_{M}(\OT;\rn)\big)^*$ and $\xi\in L_M(\OT;\rn)$, the H\"older inequality~\eqref{inq:Holder} gives boudedness of integrands in $L^1(\OT)$. Then we take into account shrinking domain of integration to get the desired convergence to $0$. In~particular, we can erase these expressions in~\eqref{A-po-mon} and divide the remaining expression by $h>0$, to obtain
\begin{equation*} \int_{ \Omega_T^j} (A(t,x,\xi+hz)-{\cal A} )\cdot z\,  dx\,dt\geq  0.
\end{equation*}

Note that
\[A(t,x,\xi+hz)\xrightarrow[h\to 0]{}A(t,x,\xi)\quad\text{a.e. in}\quad \Omega_T^j.\]
Moreover, as $A(t,x,\xi+hz)$ is bounded on $\Omega_T^j$, (A2) results in
\[c_A\int_{ \Omega_T^j} M^*\left(t,x,A(t,x,\xi+hz)\right)\,dx\,dt\leq   \sup_{h\in(0,1)} \int_{ \Omega_T^j} M\left(t,x, \left( \xi+hz\right) \right)\,dx\,dt.\]
The right-hand side is bounded, because $(\xi+hz)_h$ is uniformly bounded in $L^\infty(\Omega_T^j;\rn)\subset L_M(\Omega_T^j;\rn)$. Indeed, on $\Omega_T^j$ by definition $|\xi|\leq j$. Hence, Lemma~\ref{lem:unif}  gives uniform integrability of $\left(A(t,x,\xi+hz)\right)_h$. When we notice that $|\Omega_T^j|<\infty$, we can apply the Vitali Convergence Theorem (Theorem~\ref{theo:VitConv}) to get
\[A(t,x,\xi+hz)\xrightarrow[h\to 0]{}A(t,x,\xi)\quad\text{in}\quad L^1(\Omega_T^j;\rn).\]
Thus
\begin{equation*} \int_{ \Omega_T^j} (A(t,x,\xi+hz)-{\cal A} )\cdot z \, dx\,dt\xrightarrow[h\to 0]{} \int_{ \Omega_T^j} (A(t,x,\xi)-{\cal A} )\cdot z \, dx\,dt.
\end{equation*}
Consequently,
\begin{equation*}  \int_{ \Omega_T^j} (A(t,x,\xi)-{\cal A} )\cdot z\,  dx\,dt\geq 0,
\end{equation*}
for any $z\in L^\infty(\OT;\rn)$. Let us take
\[z=\left\{\begin{array}{ll}-\frac{A(t,x,\xi)-{\cal A} }{|A(t,x,\xi)-{\cal A} |}&\ \text{if}\quad A(t,x,\xi)-{\cal A} \neq 0,\\
0&\ \text{if}\quad A(t,x,\xi)-{\cal A} \neq 0.
\end{array}\right.\]
We obtain
\begin{equation*}  \int_{ \Omega_T^j} |A(t,x,\xi)-{\cal A} |\, dx\,dt\leq 0,
\end{equation*}
hence \[A(t,x,\xi)={\cal A} \quad \text{a.e.}\quad\text{in}\quad \Omega_T^j.\]
Since $j$ is arbitrary, we have the equality a.e. in $\OT$ and~\eqref{lim=ca} is satisfied.\end{proof}

\section{References}
\bibliographystyle{plain}
\bibliography{arXiv-Para-t}

\def\ocirc#1{\ifmmode\setbox0=\hbox{$#1$}\dimen0=\ht0 \advance\dimen0
  by1pt\rlap{\hbox to\wd0{\hss\raise\dimen0
  \hbox{\hskip.2em$\scriptscriptstyle\circ$}\hss}}#1\else {\accent"17 #1}\fi}
  \def\cprime{$'$} \def\ocirc#1{\ifmmode\setbox0=\hbox{$#1$}\dimen0=\ht0
  \advance\dimen0 by1pt\rlap{\hbox to\wd0{\hss\raise\dimen0
  \hbox{\hskip.2em$\scriptscriptstyle\circ$}\hss}}#1\else {\accent"17 #1}\fi}
  \def\ocirc#1{\ifmmode\setbox0=\hbox{$#1$}\dimen0=\ht0 \advance\dimen0
  by1pt\rlap{\hbox to\wd0{\hss\raise\dimen0
  \hbox{\hskip.2em$\scriptscriptstyle\circ$}\hss}}#1\else {\accent"17 #1}\fi}
  \def\cprime{$'$}
\begin{thebibliography}{10}

\bibitem{yags}
Y.~Ahmida, I.~Chlebicka, P.~Gwiazda, and A.~Youssfi.
\newblock {G}ossez's approximation theorems in the
  {M}usielak-{O}rlicz-{S}obolev spaces.
\newblock {\em to appear in J. Funct. Analysis}, 2018.

\bibitem{Alb-CPDE11}
A.~Alberico.
\newblock Boundedness of solutions to anisotropic variational problems.
\newblock {\em Comm. Partial Differential Equations}, 36(3):470--486, 2011.

\bibitem{ACCZG}
A.~Alberico, I.~Chlebicka, A.~Cianchi, and A.~Zatorska-Goldstein.
\newblock Fully anisotropic elliptic problem with ${L}^1$ or measure data.
\newblock {\em preprint}, 2018.

\bibitem{AlCi-aniso}
A.~Alberico and A.~Cianchi.
\newblock Comparison estimates in anisotropic variational problems.
\newblock {\em Manuscripta Math.}, 126(4):481--503, 2008.

\bibitem{AlBlFe-p}
A.~Alberico, G.~di~Blasio, and F.~Feo.
\newblock Comparison results for nonlinear anisotropic parabolic problems.
\newblock {\em Atti Accad. Naz. Lincei Rend. Lincei Mat. Appl.},
  28(2):305--322, 2017.

\bibitem{BarCi-aniso}
G.~Barletta and A.~Cianchi.
\newblock Dirichlet problems for fully anisotropic elliptic equations.
\newblock {\em Proc. Roy. Soc. Edinburgh Sect. A}, 147(1):25--60, 2017.

\bibitem{bcm-st}
P.~Baroni, M.~Colombo, and G.~Mingione.
\newblock Nonautonomous functionals, borderline cases and related function
  classes.
\newblock {\em St. Petersburg Math. J.}, 27(3):347--379, 2016.

\bibitem{bcm17}
P.~Baroni, M.~Colombo, and G.~Mingione.
\newblock Regularity for general functionals with double phase.
\newblock {\em Calc. Var. Partial Differential Equations}, 57(2):57--62, 2018.

\bibitem{BDMS}
V.~B\"ogelein, F.~Duzaar, P.~Marcellini, and C.~Scheven.
\newblock A variational approach to porous medium type equation.
\newblock {\em preprint to appear in Atti Accad. Naz. Lincei Rend. Lincei Mat.
  Appl. 2018}.

\bibitem{BDMS-arma}
V.~B\"ogelein, F.~Duzaar, P.~Marcellini, and C.~Scheven.
\newblock Doubly nonlinear equations of porous medium type.
\newblock {\em Arch. Ration. Mech. Anal.}, 229(2):503--545, 2018.

\bibitem{IC-pocket}
I.~Chlebicka.
\newblock A pocket guide to nonlinear differential equations in
  {M}usielak--{O}rlicz spaces.
\newblock {\em to appear in Nonlinear Anal.}, 2018.

\bibitem{CGZG}
I.~Chlebicka, F.~Giannetti, and A.~Zatorska-Goldstein.
\newblock Elliptic problems in the {O}rlicz setting without growth restrictions
  with measure or ${L}^1$ data.
\newblock {\em preprint}, 2018.

\bibitem{pgisazg2}
I.~Chlebicka, P.~Gwiazda, and A.~Zatorska-Goldstein.
\newblock Well-posedness of parabolic equations in the non-reflexive and
  anisotropic {M}usielak-{O}rlicz spaces in the class of renormalized
  solutions.
\newblock {\em submitted, arXiv:1707.06097}, 2018.

\bibitem{Ci-fully}
A.~Cianchi.
\newblock A fully anisotropic {S}obolev inequality.
\newblock {\em Pacific J. Math.}, 196(2):283--295, 2000.

\bibitem{Ci-sym}
A.~Cianchi.
\newblock Symmetrization in anisotropic elliptic problems.
\newblock {\em Comm. Partial Differential Equations}, 32(4-6):693--717, 2007.

\bibitem{min-double-reg1}
M.~Colombo and G.~Mingione.
\newblock Regularity for double phase variational problems.
\newblock {\em Arch. Ration. Mech. Anal.}, 215(2):443--496, 2015.

\bibitem{Donaldson}
T.~Donaldson.
\newblock Nonlinear elliptic boundary value problems in {O}rlicz-{S}obolev
  spaces.
\newblock {\em J. Differential Equations}, 10:507--528, 1971.

\bibitem{EMM}
M.~Eleuteri, P.~Marcellini, and E.~Mascolo.
\newblock Lipschitz continuity for energy integrals with variable exponents.
\newblock {\em Atti Accad. Naz. Lincei Rend. Lincei Mat. Appl.}, 27(1):61--87,
  2016.

\bibitem{EMM2}
M.~Eleuteri, P.~Marcellini, and E.~Mascolo.
\newblock Regularity for scalar integrals without structure conditions.
\newblock {\em Adv. Calc. Var.}, 2018.

\bibitem{ElMes}
A.~Elmahi and D.~Meskine.
\newblock Parabolic equations in {O}rlicz spaces.
\newblock {\em J. London Math. Soc. (2)}, 72(2):410--428, 2005.

\bibitem{ElMes2}
A.~Elmahi and D.~Meskine.
\newblock Strongly nonlinear parabolic equations with natural growth terms in
  {O}rlicz spaces.
\newblock {\em Nonlinear Anal.}, 60(1):1--35, 2005.

\bibitem{ELM}
L.~Esposito, F.~Leonetti, and G.~Mingione.
\newblock Sharp regularity for functionals with {$(p,q)$} growth.
\newblock {\em J. Differential Equations}, 204(1):5--55, 2004.

\bibitem{ZaGa}
H.~Gajewski, K.~Gr\"oger, and K.~Zacharias.
\newblock {\em Nichtlineare {O}peratorgleichungen und
  {O}peratordifferentialgleichungen}.
\newblock Akademie-Verlag, Berlin, 1974.
\newblock Mathematische Lehrb\"ucher und Monographien, II. Abteilung,
  Mathematische Monographien, Band 38.

\bibitem{Gossez2}
J.-P. Gossez.
\newblock Nonlinear elliptic boundary value problems for equations with rapidly
  (or slowly) increasing coefficients.
\newblock {\em Trans. Amer. Math. Soc.}, 190:163--205, 1974.

\bibitem{Gossez3}
J.-P. Gossez.
\newblock Orlicz-{S}obolev spaces and nonlinear elliptic boundary value
  problems.
\newblock In {\em Nonlinear analysis, function spaces and applications ({P}roc.
  {S}pring {S}chool, {H}orni {B}radlo, 1978)}, pages 59--94. Teubner, Leipzig,
  1979.

\bibitem{Gossez}
J.-P. Gossez.
\newblock Some approximation properties in {O}rlicz-{S}obolev spaces.
\newblock {\em Studia Math.}, 74(1):17--24, 1982.

\bibitem{GMWK}
P.~Gwiazda, P.~Minakowski, and A.~Wr{\'o}blewska-Kami{\'n}ska.
\newblock Elliptic problems in generalized {O}rlicz-{M}usielak spaces.
\newblock {\em Cent. Eur. J. Math.}, 10(6):2019--2032, 2012.

\bibitem{pgisazg1}
P.~Gwiazda, I.~Skrzypczak, and A.~Zatorska-Goldstein.
\newblock Existence of renormalized solutions to elliptic equation in
  {M}usielak-{O}rlicz space.
\newblock {\em J. Differential Equations}, 264(1):341--377, 2018.

\bibitem{gwiazda-non-newt}
P.~Gwiazda and A.~{\'S}wierczewska-Gwiazda.
\newblock On non-{N}ewtonian fluids with a property of rapid thickening under
  different stimulus.
\newblock {\em Math. Models Methods Appl. Sci.}, 18(7):1073--1092, 2008.

\bibitem{gwiazda-tmna}
P.~Gwiazda and A.~{\'S}wierczewska-Gwiazda.
\newblock On steady non-{N}ewtonian fluids with growth conditions in
  generalized {O}rlicz spaces.
\newblock {\em Topol. Methods Nonlinear Anal.}, 32(1):103--113, 2008.

\bibitem{Gparabolic}
P.~Gwiazda and A.~{\'S}wierczewska-Gwiazda.
\newblock Parabolic equations in anisotropic {O}rlicz spaces with general
  {$N$}-functions.
\newblock In {\em Parabolic problems}, volume~80 of {\em Progr. Nonlinear
  Differential Equations Appl.}, pages 301--311. Birkh\"auser/Springer Basel
  AG, Basel, 2011.

\bibitem{gwiazda2}
P.~Gwiazda, A.~{\'S}wierczewska-Gwiazda, and A.~Wr{\'o}blewska.
\newblock Monotonicity methods in generalized {O}rlicz spaces for a class of
  non-{N}ewtonian fluids.
\newblock {\em Math. Methods Appl. Sci.}, 33(2):125--137, 2010.

\bibitem{gwiazda-ren-ell}
P.~Gwiazda, P.~Wittbold, A.~Wr{\'o}blewska, and A.~Zimmermann.
\newblock Renormalized solutions of nonlinear elliptic problems in generalized
  {O}rlicz spaces.
\newblock {\em J. Differential Equations}, 253(2):635--666, 2012.

\bibitem{gwiazda-ren-cor}
P.~Gwiazda, P.~Wittbold, A.~Wr{\'o}blewska-Kami{\'n}ska, and A.~Zimmermann.
\newblock Corrigendum to ``{R}enormalized solutions of nonlinear elliptic
  problems in generalized {O}rlicz spaces'' [{J}. {D}ifferential {E}quations
  253 (2) (2012) 635--666] 
\newblock {\em J. Differential Equations}, 253(9):2734--2738, 2012.

\bibitem{gwiazda-ren-para}
P.~Gwiazda, P.~Wittbold, A.~Wr{\'o}blewska-Kami{\'n}ska, and A.~Zimmermann.
\newblock Renormalized solutions to nonlinear parabolic problems in generalized
  {M}usielak-{O}rlicz spaces.
\newblock {\em Nonlinear Anal.}, 129:1--36, 2015.

\bibitem{HPHPAK}
P.~Harjulehto, P.~H\"ast\"o, and A.~Karppinen.
\newblock Local higher integrability of the gradient of a quasiminimizer under
  generalized {O}rlicz growth conditions.
\newblock {\em to appear in Nonlinear Anal.}, 2018.

\bibitem{hhk}
P.~Harjulehto, P.~H\"ast\"o, and R.~Kl\'en.
\newblock Generalized {O}rlicz spaces and related {PDE}.
\newblock {\em Nonlinear Anal.}, 143:155--173, 2016.

\bibitem{hht}
P.~Harjulehto, P.~H\"ast\"o, and O.~Toivanen.
\newblock H\"older regularity of quasiminimizers under generalized growth
  conditions.
\newblock {\em Calc. Var. Partial Differential Equations}, 56(2):56:22, 2017.

\bibitem{LM}
M.~Lavrentiev.
\newblock Sur quelques probl\`{e}mes du calcul des variations.
\newblock {\em Ann. Mat. Pura Appl.}, 41:107--124, 1927.

\bibitem{le-ex}
V.~K. Le.
\newblock On second order elliptic equations and variational inequalities with
  anisotropic principal operators.
\newblock {\em Topol. Methods Nonlinear Anal.}, 44(1):41--72, 2014.

\bibitem{mmos:ap}
F.-Y. Maeda, Y.~Mizuta, T.~Ohno, and T.~Shimomura.
\newblock Approximate identities and {Y}oung type inequalities in
  {M}usielak-{O}rlicz spaces.
\newblock {\em Czechoslovak Math. J.}, 63(138)(4):933--948, 2013.

\bibitem{mmos2013}
F.-Y. Maeda, Y.~Mizuta, T.~Ohno, and T.~Shimomura.
\newblock Boundedness of maximal operators and {S}obolev's inequality on
  {M}usielak-{O}rlicz-{M}orrey spaces.
\newblock {\em Bull. Sci. Math.}, 137(1):76--96, 2013.

\bibitem{Marc1}
P.~Marcellini.
\newblock Regularity of minimizers of integrals of the calculus of variations
  with nonstandard growth conditions.
\newblock {\em Arch. Rational Mech. Anal.}, 105(3):267--284, 1989.

\bibitem{Marc2}
P.~Marcellini.
\newblock Regularity and existence of solutions of elliptic equations with
  {$p,q$}-growth conditions.
\newblock {\em J. Differential Equations}, 90(1):1--30, 1991.

\bibitem{Musielak}
J.~Musielak.
\newblock {\em Orlicz spaces and modular spaces}, volume 1034 of {\em Lecture
  Notes in Mathematics}.
\newblock Springer-Verlag, Berlin, 1983.

\bibitem{Mustonen}
V.~Mustonen and M.~Tienari.
\newblock On monotone-like mappings in {O}rlicz-{S}obolev spaces.
\newblock {\em Math. Bohem.}, 124(2-3):255--271, 1999.

\bibitem{Novotny}
A.~Novotn{\'y} and I.~Stra{\v{s}}kraba.
\newblock {\em Introduction to the mathematical theory of compressible flow},
  volume~27 of {\em Oxford Lecture Series in Mathematics and its Applications}.
\newblock Oxford University Press, Oxford, 2004.

\bibitem{Sk1}
M.~S. Skaff.
\newblock Vector valued {O}rlicz spaces generalized {$N$}-functions. {I}.
\newblock {\em Pacific J. Math.}, 28:193--206, 1969.

\bibitem{Sk2}
M.~S. Skaff.
\newblock Vector valued {O}rlicz spaces generalized {$N$}-functions. {II}.
\newblock {\em Pacific J. Math.}, 28:413--430, 1969.

\bibitem{ASGpara}
A.~{\'S}wierczewska-Gwiazda.
\newblock Nonlinear parabolic problems in {M}usielak-{O}rlicz spaces.
\newblock {\em Nonlinear Anal.}, 98:48--65, 2014.

\bibitem{Trud-Ex}
N.~S. Trudinger.
\newblock An imbedding theorem for {$H_{0}(G,\,\Omega )$} spaces.
\newblock {\em Studia Math.}, 50:17--30, 1974.

\bibitem{Aneta}
A.~Wr{\'o}blewska.
\newblock Steady flow of non-{N}ewtonian fluids---monotonicity methods in
  generalized {O}rlicz spaces.
\newblock {\em Nonlinear Anal.}, 72(11):4136--4147, 2010.

\bibitem{ZV}
V.~V. Zhikov.
\newblock On {L}avrentiev's phenomenon.
\newblock {\em Russian J. Math. Phys.}, 3(2):249--269, 1995.

\bibitem{zhikov9798}
V.~V. Zhikov.
\newblock On some variational problems.
\newblock {\em Russian J. Math. Phys.}, 5(1):105--116 (1998), 1997.

\bibitem{Zhikov2011}
V.~V. Zhikov.
\newblock On variational problems and nonlinear elliptic equations with
  nonstandard growth conditions.
\newblock {\em J. Math. Sci. (N.Y.)}, 173(5):463--570, 2011.
\newblock Problems in mathematical analysis. No. 54.

\end{thebibliography}
\end{document}